\theoremstyle{plain}
\newtheorem{theorem}{Theorem}[section]
\newtheorem{lemma}[theorem]{Lemma}
\newtheorem{proposition}[theorem]{Proposition}
\newtheorem{corollary}[theorem]{Corollary}
\theoremstyle{definition}
\newtheorem{definition}[theorem]{Definition}
\newtheorem{remark}[theorem]{Remark}
\newtheorem{example}[theorem]{Example}
\numberwithin{equation}{section}
\newcommand\N{\mathbb{N}}
\newcommand\Z{\mathbb{Z}}
\newcommand\R{\mathbb{R}}
\newcommand\C{\mathbb{C}}
\newcommand\D{\mathcal{D}}
\renewcommand\S{\mathcal{S}}
\newcommand\K{\mathcal{K}}
\renewcommand\O{\mathcal{O}}
\newcommand\OC{\O_{C}}
\newcommand\OM{\O_{M}}
\newcommand\B{\mathcal{B}}
\newcommand\V{\mathcal{V}}
\newcommand\W{\mathcal{W}}
\newcommand\ev[2]{\langle#1,#2\rangle}
\DeclareMathOperator\supp{supp}
\DeclareMathOperator\loc{loc}
\DeclareMathOperator\condwM{wM}
\DeclareMathOperator\condN{N}
\DeclareMathOperator\condwN{wN}
\DeclareMathOperator\semloc{mc}
\DeclareMathOperator\approxim{ess}
\begin{document}

\title{Weighted function spaces: convolutors, multipliers, and mollifiers}

\author[L. Neyt]{Lenny Neyt}
\address{University of Vienna\\ Faculty of Mathematics\\ Oskar-Morgenstern-Platz 1 \\ 1090 Wien\\ Austria}
\thanks{The research of L. Neyt was funded in whole by the Austrian Science Fund (FWF) 10.55776/ESP8128624. For open access purposes, the author has applied a CC BY public copyright license to any author-accepted manuscript version arising from this submission.}
\email{lenny.neyt@univie.ac.at}

\author[Y. Sawano]{Yoshihiro Sawano}
\thanks{Y. Sawano was supported by Grant-in-Aid for Scientific Research (C) (19K03546), the Japan Society for the Promotion of Science.}
\address{Department of Mathematics \\ Chuo University \\ Kasuga 1-13-27, Bunkyo \\ Tokyo 112-8551 \\ Japan}
\email{sawano@math.chuo-u.ac.jp}

\subjclass[2020]{Primary 46E10, 46F05; Secondary 46H25.}
\keywords{Gelfand-Shilov spaces of smooth functions; convolution; multiplier spaces; mollification; translation-invariant Banach function spaces; weight function systems}
\begin{abstract}
We study smooth function spaces of Gelfand-Shilov type, with global behavior governed through a translation-invariant Banach function space and localized via a weight function system. We clarify the roles of the translation-invariant Banach function space, convolution, and pointwise multiplication in connection with the weight function system. Our primary goal is to characterize these function spaces---as well as the corresponding convolutor and multiplier spaces---through mollification. For this purpose, we introduce the moment-wise decomposition factorization property for pairs of compactly supported smooth functions, and establish complete characterizations in terms of mollifications with these windows.
\end{abstract}

\maketitle

\section{Introduction}
\label{s1}

The specific nature of a generalized function can be investigated through various techniques. Well-known approaches include Littlewood--Paley theory, atomic decompositions, and wavelet decompositions; see \cite{T-ThFuncSp, T-ThFuncSpII} for comprehensive overviews.

An alternative method involves the use of mollifiers. Let \( \D = \D(\R^d) \) denote the space of compactly supported smooth functions on \( \R^d \), and let \( \D' \) be its dual, the space of distributions. For any \( \phi \in \D \), we define the scaled function
\begin{equation}
	\label{eq:Mollifier} 
    \phi_j = 2^{jd} \phi(2^j \cdot), \qquad j \in \N.
\end{equation}
Then, for any \( f \in \D' \), we have
\begin{equation} \label{eq:250421-12}
    \lim_{j \to \infty} f * \phi_j = \left( \int \phi \right) f
\end{equation}
in the sense of distributions. Each convolution \( f * \phi_j \) is a smooth function. 

This naturally raises the question: can certain properties of $f$---such as regularity or decay---be inferred from the sequence $(f * \phi_j)$? Addressing this question lies at the core of the sequential approach to distribution theory \cite{A-M-S-ThDistSeqApproach, K-DistAppliedMath}.
Problems of this kind also arise naturally in the study of (singular) partial differential equations and are closely related to the nonlinear theory of generalized functions, as discussed in \cite{O-MultDistApplPDE}. Characterizing local regularity via mollifiers is a classical theme in generalized function theory (see, for instance, \cite{H-HoldZygRegAlgGenFunc, P-S-V-RegPropDistSeq}). In the present work, however, our focus is on the \emph{global} behavior of functions. More specifically, we aim to determine when a distribution belongs to certain Gelfand--Shilov-type spaces---spaces of smooth functions adhering to specified decay---or to their associated convolutor and multiplier spaces.

Let \( E \) be a solid translation-invariant Banach function space \cite{F-G-BanachSpIntGroupRepAtomicDecomp} (see Definition~\ref{defi:250421-1} below). A standard example is \( E = L^p(\R^d) \) for \( p \in [1, \infty] \).
Given a sequence \( \W = (w_n)_{n \in \N} \) of continuous functions on \( \R^d \) such that
\[ 1 \leq w_n \leq w_{n + 1}, \qquad n \in \N, \]
we define
\[
    E^\infty_\W = \left\{ f \in \D^\prime \mid f^{(\alpha)} w_n \in E \text{ for all } n \in \N \text{ and } \alpha \in \N^d \right\}.
\]
For \( E = L^p(\R^d) \), \( p \in [1, \infty] \), the spaces \( \mathcal{K}^p_\W = (L^p(\R^d))^\infty_\W \) were introduced by Gelfand and Shilovin \cite{G-S-GenFunc2, G-S-GenFunc3} for the study of differential equations. 
These spaces provide a unifying framework encompassing classical test function spaces such as the Schwartz space \cite{S-ThDist}, the Hasumi--Silva space \cite{H-NoteNDimTempUltraDist, H-SP-ThGenFunc}, and the spaces \( \mathcal{D}_{L^p} \), \( p \in [1, \infty) \), and $\mathcal{B}$, where a function \( f \in \mathcal{D}_{L^p} \), respectively $f \in \mathcal{B}$, exactly when \( f \) and all its derivatives belong to \( L^p(\R^d) \), respectively $L^\infty(\R^d)$.
A substantial portion of this note will be dedicated to a detailed study of the space \( E^\infty_\W \), as well as its associated convolutor and multiplier spaces, under minimal assumptions on \( E \) and \( \W \).

The space \( E \), particularly its norm, governs how global behavior is interpreted within \( E^\infty_\W \). The sequence of weights \( \W \) then acts to localize this behavior.
In the literature, the analysis of many function spaces relies on $E$ having a two-module structure:
\begin{itemize}
    \item \( E \) forms an \( L^1 \)-module under convolution;
    \item \( E \) forms an \( L^\infty \)-module under pointwise multiplication. 
\end{itemize}
Inspired by the notion of homogenous Banach spaces in the work of Katznelson \cite{K-IntroHarmAnal} on $\R$, the introduction of such a type of Banach function spaces goes back to the work of Feichtinger in \cite{F-MultBanachSpFuncGroups} (see also \cite{B-F-BanachSpDistTwoMod}) in abstract harmonic analysis.
Notably, they were used as building blocks for the development of coorbit theory \cite{F-G-BanachSpIntGroupRepAtomicDecomp}.
We refer to \cite{F-UpcomingTerminology} and the references therein for a thorough overview of Banach function spaces and their applications.
The two-module structure enables us to treat numerous results 
on the algebraic structure of function spaces in a unified manner, as illustrated in \cite{D-V-WeighIndLimitsSpUltaDiffFunc, D-V-TopPropConvSpSTFT, D-HarmonicAnalCommutativeBanachSp, D-TISubspLp, W-ConvDLp}.
We refer to \cite{D-N-SeqSpRepTMIB, D-N-V-InclRelGSSp, D-P-V-NewDistSpTIB, FeGu21a, FeGu23b, N-ConvTMIB} for more recent applications of this perspective, often in the context of solid translation-invariant Banach spaces of bounded type, or other closely related Banach function spaces.
Our setting will be formally introduced and explored in Section~\ref{s2}.

We define weight function systems \( \W = (w_n)_{n \in \N} \) in Section~\ref{s3} and investigate several conditions imposed on them. 
For our analysis to work, we will require that \( \W \) is 
\emph{weak-moderate $\mathrm{(\condwM)}$}, that is,
\[
(\condwM) \quad
    \text{for all } n \in \N, \text{ there exists } m \ge n \text{ such that } \sup_{|y| \leq 1} \frac{w_n(\cdot + y)}{w_m(\cdot)} = O(1).
\]
Assuming \(\mathrm{(\condwM)}\) ensures that the space \( E^\infty_\W \) is translation-invariant and closed under convolution with elements in \( \D \) (see Proposition~\ref{p:EWConvCompact} and use \( E^\infty_\W \subset E_\W \)).
Additional conditions on \( \W \) are introduced later in Section \ref{s3}
to further localize \( E^\infty_\W \) and to control the behavior of its associated convolutor and multiplier spaces.

A thorough investigation of the Gelfand--Shilov-type space \( E^\infty_\W \) and its convolutor and multiplier spaces is given in Sections~\ref{sec:SpEinftyW} and~\ref{sec:ConvMultSp}.
If the weight functions in \( \W \) exhibit relative decay, that is, when for each $n \in \N$ there is an integer $m \geq n$ such that $w_n / w_m$ has a certain form of vanishing behavior at infinity, the space becomes more localized, and the global behavior in the definition of \( E^\infty_\W \) plays a diminished role.
Most notably, if \( \W \) satisfies the condition \(\mathrm{(\condN)}\) (already used in \cite{G-S-GenFunc3} to characterize nuclearity), that is, for each \( n \in \N \) there exists \( m \geq n \) such that \( w_n / w_m \in L^1(\R^d) \), then \( E^\infty_\W \) becomes independent of the specific Banach function space \( E \) used to define it; see Theorem~\ref{t:IndependenceTIBF}.

Analogous results are established for the associated convolutor and multiplier spaces; see in particular Theorems~\ref{t:ExtensionOC} and~\ref{t:CharOMSingleW}.
Our analysis is also optimal: in each case, we prove that the assumptions imposed on \( \W \) (under the standing condition \(\mathrm{(\condwM)}\)) are necessary.

In the final section, Section~\ref{s6}, we characterize 
the aforementioned spaces by means of mollification.
Our objective is twofold: to minimize the number of mollifying windows used and to relax their structural requirements as much as possible.

Generally, using a single window tends to be rigid.
When the window has zero mean, the setting essentially corresponds to a continuous wavelet transform \cite{D-TenLectWavelets, H-Wavelets}.
However, proper wavelet analysis necessitates windows with non-compact support, thereby tying the analysis more directly to the localization imposed by \( \W \).
Such a situation will be examined in a forthcoming work.

If the window has a non-zero mean, one can still characterize spaces
for certain instances of \( E^\infty_\W \) (see Proposition~\ref{p:250408-1}). 
However, it appears difficult in this setting to achieve effective cancellation of low-frequency background components.
The situation changes when using two windows.
In Definition \ref{def:MDFP}, we introduce the \emph{moment-wise decomposition factorization property}
(MDFP for short) for a pair of windows $(\chi^0, \chi^1) \in \D \times \D$.
By design, excluding some additional technical conditions, these functions have the special property that for any $L \in \N$, there exist $\varphi^\ell, \psi^\ell \in \D$, $\ell \in \{0, 1\}$, such that the moments of $\psi^0, \psi^1$ up to order $L$ vanish, and (recall our notation \eqref{eq:Mollifier})
	\begin{equation}
		\label{eq:IntroDecomp}
		f = \sum_{\ell = 0}^1 \left[ \{f * \chi^\ell_0\} * \varphi^\ell_0 + \sum_{j = 0}^\infty 4^{-j} \Delta (\{f * \chi^\ell_j\} * \psi^\ell_j) \right] 
	\end{equation} for  every distribution $f$, where $\Delta$ denotes the Laplace operator.
Exploiting the fact that the vanishing order $L$---which governs the control of erratic low-frequency behavior---can be chosen freely, we extrapolate the behavior of $f$ from its mollifications $f * \chi^0_j$ and $f * \chi^1_j$.
Based on this idea,
we characterize the space $E^\infty_\W$ and its corresponding convolutor and multiplier spaces; 
see Theorems~\ref{t:KWDecomp}, \ref{t:OCDecomp}, and \ref{t:OMDecomp}.
To demonstrate the existence of a pair of windows satisfying the moment-wise decomposition factorization property (MDFP), we employ tools from the theory of ultradifferentiable functions in the sense of Braun-Meise-Taylor~\cite{B-M-T-UltradiffFuncFourierAnal}.

To conclude this introduction, we characterize spaces defined via a single weight in terms of mollifiers.
Let $\omega : \R^d \to [0, \infty)$ be a non-decreasing, continuous function.
For $p \in [1, \infty]$, we define the function space
	\[ \K^p_\omega := \{ f \in C^\infty(\R^d) \mid f^{(\alpha)} e^{n \omega(\cdot)} \in L^p(\R^d) \text{ for all } n \in \N \text{ and } \alpha \in \N^d \} , \]
the convolutor space
	\[ \OC^\prime(\D, \K^p_\omega) := \{ f \in \D^\prime \mid f * \phi \in \K^p_\omega \text{ for all } \phi \in \D \} , \]
and the multiplier space
	\[ \OM(\K^\infty_\omega, \K^p_\omega) := \{ f \in C^\infty(\R^d) \mid f \cdot \phi \in \K^p_\omega \text{ for all } \phi \in \K^\infty_\omega \} . \]
When $\omega \equiv 0$, then $\K^p_{1} = \D_{L^p}$, $p \in [1, \infty)$, and $\K^\infty_1 = \mathcal{B}$. Here $\OC^\prime(\D, \D_{L^p}) = \D^\prime_{L^p}$, the dual of $\D_{L^q}$ with $q$ the conjugate index of $p$, and $\OM^\prime(\B, \D_{L^p}) = \D_{L^p}$, see \cite{S-ThDist}.
We note that if $\log (1 + |t|) = O(\omega(t))$ as $|t| \to \infty$, the definition of $\K^p_\omega$ is independent of $p \in [1, \infty]$, in view of Theorem \ref{t:IndependenceTIBF}.
In the case where $\omega(t) = \log(1 + |t|)$, then $\K^p_{\log(1 + |\cdot|)}$ is exactly the Schwartz space $\mathcal{S}$, and $\OC^\prime(\D, \S) = \OC^\prime$ and $\OM(\S, \S) = \OM$ the convolutor and multiplier spaces, respectively, as introduced by Schwartz in \cite{S-ThDist}. 
Another notable case is when $\omega(t) = |t|$, where $\K^p_{|\cdot|}$ becomes the Hasumi--Silva space \cite{H-NoteNDimTempUltraDist, H-SP-ThGenFunc}.
The spaces
$\K^p_\omega$,
$\OC^\prime(\D, \K^p_\omega)$,
and
$\OM(\K^\infty_\omega, \K^p_\omega)$
are characterized as follows:

\begin{theorem}
	\label{t:Intro}
	Let $p \in [1, \infty]$.
	Let $\omega : \R^d \to [0, \infty)$ be a non-decreasing, continuous function such that $\omega$ is moderate, i.e.,
	\begin{equation}\label{eq:moderateness} \sup_{|y| \leq 1} \frac{\omega(x + y)}{\omega(x)} < \infty , \qquad x \in \R^d . \end{equation}
	Let $(\chi^0, \chi^1) \in \D \times \D$ satisfy the MDFP.
	For any $f \in \D^\prime$, we have that:
		\begin{itemize}
			\item[$(i)$] $f \in \K^p_\omega$ if and only if
there exists $r>0$ such that for all $n \in {\mathbb N}$ and $\alpha \in {\mathbb N}^d$
we have
				\[ \sup_{j \in \N, \, \ell \in \{0, 1\}} 2^{-rj} \| \partial^\alpha [ f * \chi^\ell_j ] e^{n \omega} \|_{L^p} < \infty . \]
			\item[$(ii)$] $f \in \OC^\prime(\D, \K^p_\omega)$ if and only if
for all $n \in {\mathbb N}$ there exists $r>0$ such that
				\[ \sup_{j \in \N, \, \ell \in \{0, 1\}} 2^{-rj} \| [ f * \chi^\ell_j ] e^{n \omega} \|_{L^p} < \infty . \]
			\item[$(iii)$] $f \in \OM(\K^\infty_\omega, \K^p_\omega)$ if and only if there exists $r>0$ such that for all $\alpha \in {\mathbb N}^d$,
we can find $n \in {\mathbb N}$ satisfying
				\[ 
\sup_{j \in \N, \, \ell \in \{0, 1\}} 2^{-rj} \| \partial^\alpha [ f * \chi^\ell_j ] e^{- n \omega} \|_{L^p} < \infty . \]
		\end{itemize}
\end{theorem}
The proof of Theorem \ref{t:Intro}  will be given at the end of Section \ref{s6} as an application of our general results.

\section{Translation-invariant Banach function spaces}
\label{s2}

We consider function spaces that are invariant and bounded under translation, which will serve as the building blocks of our theory. These spaces govern the global behavior of the functions we study. 

The translation of a function $f$ by a vector $x \in \R^d$ is defined as
\[
  T_{x}f(t) := f(t - x),
\]
and the reflection of $f$ about the origin is defined as
\[
  \check{f}(t) := f(-t).
\]

For any set $A \subseteq \R^d$, we write $1_A$ to denote the indicator function of $A$. A compact subset $K$ of $\R^d$ is written as $K \Subset \R^d$. For any $x \in \R^d$ and $R > 0$, we write $\overline{B}(x, R)$ for the closed ball of radius $R$ centered at $x$.

By \( L^1_{\mathrm{loc}}(\mathbb{R}^d) \), we denote the space of all locally integrable functions on \( \mathbb{R}^d \). 
For any compact set \( K \Subset \mathbb{R}^d \), we write \( C_{\mathrm{c}}(K) \) and \( \mathcal{D}(K) \) for the Banach space of continuous functions and the Fr\'echet space of smooth functions, respectively, whose supports are contained in \( K \). 
We equip
\[
C_{\mathrm{c}}(\mathbb{R}^d) := \bigcup_{K \Subset \mathbb{R}^d} C_{\mathrm{c}}(K)
\quad \text{and} \quad
\mathcal{D} := \bigcup_{K \Subset \mathbb{R}^d} \mathcal{D}(K)
\]
with their canonical strict \((LF)\)-space topologies. 
Moreover, for each \( L \in \mathbb{N} \), we define the following closed subspace of \( \mathcal{D}(K) \):
\[
\mathcal{D}_L(K)
    := \left\{ \varphi \in \mathcal{D}(K) \;:\;
        \int_{\mathbb{R}^d} x^{\alpha} \varphi(x) \, dx = 0
        \text{ for all } |\alpha| < L \right\}.
\]
\begin{definition}\label{defi:250421-1}
A Banach space $(E, \|\cdot\|_E)$ is called a \emph{translation-invariant Banach function space} 
(\emph{TIBF} for short) \emph{of bounded type} 
on ${\mathbb R}^d$ if $E$ is non-trivial, the continuous inclusion $E \subseteq L^{1}_{\loc}({\mathbb R}^d)$ holds, and $E$ satisfies the following three conditions:
\begin{itemize}
  \item[(A.1)] $T_{x} E \subseteq E$ for all $x \in {\mathbb R}^{n}$.
  \item[(A.2)] There exists $C_0 > 0$ such that $\|T_{x} f\|_{E} \leq C_0 \|f\|_{E}$ for all $x \in {\mathbb R}^{d}$ and $f \in E$.
  \item[(A.3)] $E$ is a Banach module of $L^1(\R^d)$ with respect to convolution.
\end{itemize}
The Banach space $E$ is called \emph{solid} if for all $f \in E$ and $g \in L^{1}_{\loc}({\mathbb R}^d)$ we have that
\[
  |g(x)| \leq |f(x)| \text{ for almost all } x \in {\mathbb R}^d \quad \Longrightarrow \quad g \in E \text{ and } \|g\|_{E} \leq \|f\|_{E}.
\] 
	\end{definition}

	\begin{remark}
    Let $(E, \|\cdot\|_E)$ be a solid TIBF of bounded type.
    \begin{itemize}
\item[$(i)$]  Every element of $L^\infty({\mathbb R}^d)$ with compact support belongs to $E$ (cf.\ \cite[Lemma 3.9]{F-G-BanachSpIntGroupRepAtomicDecomp}).
In particular, $1_K \in E$ for every compact 
subset $K \subseteq {\mathbb R}^{n}$.
\item[$(ii)$] By $(i)$ we have that $C_{\rm c}(\mathbb{R}^d) \subseteq E$. Since $C_{\rm c}(\mathbb{R}^d)$ may be endowed with the topology of an inductive limit of Banach spaces, $L^1_{\loc}(\R^d)$ with that of a Fr\'echet space, and since $C_{\rm c}(\mathbb{R}^d) \subseteq L^1_{\loc}(\mathbb{R}^d)$ continuously, it follows from the closed graph theorem that $C_{\rm c}(\mathbb{R}^d) \subseteq E$ continuously.
\end{itemize}
	\end{remark}
	
	\begin{remark}
		\label{rem:StandardSituation}
In the terminology of \cite[Definition~3.1]{B-F-BanachSpDistTwoMod}, 
a solid TIBF \( (E, \|\cdot\|_E) \) is in a \emph{standard situation} 
with respect to the Banach algebra \( C_0(\mathbb{R}^d) \),
the Banach space of all continuous functions vanishing at infinity.
In fact, \( E \) is a two Banach module: 
it is a module over \( L^1(\mathbb{R}^d) \) with respect to convolution 
and a module over \( L^\infty(\mathbb{R}^d) \) with respect to pointwise multiplication. 
Hence, for any \( f \in E \), \( \varphi \in L^1(\mathbb{R}^d) \), and \( \psi \in L^\infty(\mathbb{R}^d) \),
\begin{equation}
\label{eq:ModuleBounds}
    \| f * \varphi \|_E \le \|f\|_E \|\varphi\|_{L^1},
    \qquad 
    \| f \cdot \psi \|_E \le \|f\|_E \|\psi\|_{L^\infty}.
\end{equation}
This property will play a key role in the subsequent analysis.
	\end{remark}

For the remainder of this work, 
as a standing assumption,
\( (E, \|\cdot\|_E) \), 
or simply \( E \) for short, will always denote a solid TIBF of bounded type.

\subsection{Examples of translation-invariant Banach function spaces}
\label{s2.1}

We present several examples of solid TIBF of bounded type.
	
\begin{example}
\label{ex:BasicTIBF}
Rearangement invariant Banach function spaces (see \cite{L-T-ClassicBanachSpII}) encompass many well-known examples of solid TIBF of bounded type.

$\bullet$ \textbf{Lebesgue spaces:} Let $p \in [1,\infty]$. The Lebesgue space $L^p = L^p({\mathbb R}^d)$ is a solid TIBF of bounded type on ${\mathbb R}^d$. 
We also define 
	\[ L^0 = L^0({\mathbb R}^d) := \{ f \in L^\infty \mid \text{for all } \varepsilon > 0 \text{ there is } K \Subset \R^d \text{ so that } \|(1 - 1_K) f\|_{L^\infty} < \varepsilon \} . \]
Thus, $L^0$ consists of all $L^\infty$-functions vanishing at infinity, and is a closed subspace of $L^\infty$. 
Then $L^0$ is a solid TIBF of bounded type on ${\mathbb R}^d$.

$\bullet$ \textbf{Lorentz spaces:} Let $1 \leq p \leq q \leq \infty$. The Lorentz space $L^{p, q} = L^{p, q}(\R^d)$ \cite{B-S-InterpolOp} is the space of all $f \in L^1_{\loc}(\R^d)$ for which the norm
	\[ \|f\|_{L^{p, q}} := \begin{cases} \left( \int_0^\infty \left(r^{\frac{1}{p}} f^{*}(r) \right)^{q} \frac{dr}{r} \right)^{\frac{1}{q}} ,  & q < \infty , \\ \sup_{r > 0} r^{\frac{1}{p}} f^{*}(r) , & q = \infty , ~ \end{cases}  \]
is finite, where
	\[ f^*(r) = \inf \{ s > 0 : |\{ x \mid |f(x)| > s\}| \leq r \} . \]
Each $L^{p, q}$ is a solid TIBF of bounded type.

$\bullet$ \textbf{Orlicz spaces:} Let $\Phi : [0, \infty) \to [0, \infty)$ be a \emph{Young function}, that is, a convex, continuous function increasing to infinity such that $\Phi(0) = 0$ and $\Phi(r) > 0$ for $r > 0$.
The Orlicz space $L^\Phi = L^\Phi(\R^d)$ \cite{R-R-ThOrliczSp} is the space of all $f \in L^1_{\loc}(\R^d)$ for which the norm
	\[ \|f\|_{L^\Phi} := \inf \left\{ R > 0 \mid \int_{\R^d} \Phi\left(\frac{|f(x)|}{R}\right) dx < \infty \right\} . \]
is finite. Then $L^\Phi$ is a solid TIBF of bounded type.
%
%
%
\end{example}

Another family of well-known function spaces is the Morrey spaces.

\begin{example}[Morrey spaces]
Let \(1 \le q \le p < \infty\). For a function \(f \in L^q_{\mathrm{loc}}(\mathbb{R}^d)\), its \emph{Morrey norm} is defined by
\[
\| f \|_{\mathcal{M}^{p}_q}
:=
\sup_{x \in \mathbb{R}^d, R>0}
|B(x,R)|^{\frac{1}{p}-\frac{1}{q}}
\left(
\int_{B(x,R)}|f(y)|^{q}\,\mathrm{d}y
\right)^{\frac{1}{q}},
\]
where \(B(x,R)\) denotes the open ball centered at \(x \in \mathbb{R}^d\) with radius \(R>0\).  
The \emph{Morrey space} \(\mathcal{M}^{p}_q = \mathcal{M}^{p}_q(\mathbb{R}^d)\) consists of all \(f \in L^q_{\mathrm{loc}}(\mathbb{R}^d)\) such that \(\| f \|_{\mathcal{M}^{p}_q} < \infty\). 
It is a solid TIBF of bounded type.
These spaces interpolate between Lebesgue spaces (\(q = p\)) and spaces 
with weaker local control (\(q < p\)), and are useful in regularity theory for PDEs \cite{book}.
\end{example}

We may also associate Wiener-type spaces to any solid TIBF of bounded type.
\begin{example}[Wiener amalgam spaces]
To $(E, \|\cdot\|_E)$ we may associate the Banach space
	\[ E_{\rm seq} = \left\{ c = (c_j)_{j \in \Z^d} \in \C^{\Z^d} \mid \sum_{j \in \Z^d} c_j T_j 1_{[0, 1]^d} \in E \right\}, \]
which we endow with the norm $\|c\|_{E_{\rm seq}} = \|\sum_{j \in \Z^d} |c_j| T_j 1_{[0, 1]^d}\|_E$.
For example, $L^p_{\rm seq} = \ell^p$ for $p \in [1, \infty]$ and $L^0_{\rm seq} = c_0$. 
One has the continuous inclusions $\ell^1 \subseteq E_{\rm seq} \subseteq \ell^\infty$ \cite[Lemma 3.5(b)]{F-G-BanachSpIntGroupRepAtomicDecomp}.

Then, for $p \in [1, \infty]$, we define the Wiener amalgam space \cite{F-BanachConvAlgWienerType,F-S-Amalgams}
	\[ W(L^p, E_{\rm seq}) = \{ f \in L^p_{\loc}(\R^d) \mid (\|f \cdot T_j 1_{[0, 1]^d}\|_{L^p})_{k \in \Z^d} \in E_{\rm seq} \}  \]
with the norm $\|f\|_{W(L^p, E_{\rm seq})} = \| (\|f \cdot T_j 1_{[0, 1]^d}\|_{L^p})_{j \in \Z^d} \|_{E_{\rm seq}}$.
Then $W(L^p, E_{\rm seq})$ is again a solid TIBF of bounded type (see \cite[Corollary 2]{F-BanachConvAlgWienerType}).
Here, $L^p$ can be seen as the ``local'' component, while $E$ measures the ``global'' component of $W(L^p, E_{\rm seq})$.
For example, we have
	\[ W(L^p, \ell^q) = \left\{ f \in L^1_{\loc}(\R^d) \mid \sum_{j \in \Z^d} \left( \int_{[0, 1]^d} |f(x + j)|^{p} dx \right)^{\frac{q}{p}} < \infty \right\} , \]
with the obvious modifications when $p = \infty$ or $q = \infty$.
\end{example}

The space $W(L^1, E_{\rm seq})$ in particular can be seen as the largest solid TIBF of bounded type that convolutes $C_{\rm c}(\R^d)$ into $E$.

\begin{lemma} \label{l:W(L^1, E_d)ConvSp}
We have 
\begin{equation}\label{eq:ConvAmalgam} W(L^1, E_{\rm seq}) = \{ f \in L^1_{\loc}(\R^d) \mid |f| * \phi \in E \text{ for all } \phi \in C_{\rm c}(\R^d) \} . \end{equation}
An equivalent norm for $W(L^1, E_{\rm seq})$ is given by $\| f \|_{W(L^1, E_{\rm seq}), 2} := \| |f| * 1_{[0, 1]^d} \|_E$.
\end{lemma}

\begin{proof}
Feichtinger and Gr\"ochenig showed that
$W(L^1, E_{\rm seq})$ is continuously contained in the right-hand side of 
\eqref{eq:ConvAmalgam};
see \cite[Theorem 7.1]{F-G-BanachSpIntGroupRepAtomicDecompII}.
On the other hand, let $f \in L^1_{\loc}(\R^d)$ now be an element of the right-hand side of \eqref{eq:ConvAmalgam}, and take any non-negative function $\chi \in \D$ such that $\chi \geq 1_{[0, 1]^d}$.
Then $\| f \cdot T_j 1_{[0, 1]^d} \|_{L^1} \leq |f| * \chi (j)$.
As in the proof of \cite[Lemma 2.7]{D-N-V-InclRelGSSp}, we see that then necessarily $(|f| * \chi(j))_{j \in \Z^d} \in E_{\rm seq}$, so we conclude $f \in W(L^1, E_{\rm seq})$.
The final statement about the equivalent norm now follows from the open mapping theorem.
\end{proof}
	

Each solid TIBF of bounded type is located between the Wiener amalgam spaces $W(L^\infty, \ell^1)$ and $W(L^1, \ell^\infty)$.

\begin{lemma}[{\cite[Lemma 3.9]{F-G-BanachSpIntGroupRepAtomicDecomp}}]
\label{l:InclusionWienerSpace}
We have the continuous inclusions
	\[ W(L^\infty, \ell^1) \subseteq E \subseteq W(L^1, \ell^\infty) . \]
\end{lemma}

\subsection{Essential part and module completion}
\label{s2.2}

Following \cite{B-F-BanachSpDistTwoMod}
and Remark \ref{rem:StandardSituation}, to any solid TIBF of bounded type we may associate two others.

\begin{definition}
	Let $(E, \|\cdot\|)$ be a solid TIBF of bounded type.
	We define the following two function spaces.
	
	$(i)$ The essential part $E_{\approxim}$ is given by
		\[ E_{\approxim} := \{ f \in E \,:\, \text{for all } \varepsilon > 0 \text{ there is } K \Subset \R^d \text{ so that } \|(1 - 1_K) f\|_E \leq \varepsilon \} \]
	endowed with the original norm of $E$.
		
	$(ii)$ The multiplicative module completion $E_{\semloc}$ is given by
		\[ E_{\semloc} := \{ f \in L^1_{\loc}(\mathbb{R}^d) \,:\, 1_K f \in E \text{ for all } K \Subset \mathbb{R}^d \text{ and } \|f\|_{E_{\semloc}} := \sup_{K \Subset \R^d} \| 1_{K} f \|_E < \infty \} \]
	endowed with the norm $\|\cdot\|_{E_{\semloc}}$.
\end{definition}

For example $L^0=(L^\infty)_{\approxim}$.
The spaces $E_{\approxim}$ and $E_{\semloc}$ have the following natural descriptions.
In particular, in view of \cite[Lemma 4.3]{B-F-BanachSpDistTwoMod}, $E_{\semloc}$ then coincides with the $C_0(\R^d)$ module completion of $E$.

\begin{proposition}
\label{p:E=MEiffSemiLocal}
The space $E_{\approxim}$ is a closed subspace of $E$.
Moreover
	\begin{equation}
		\label{eq:SemiLocalMultSpC0}
		E_{\semloc} = \{ f \in L^1_{\loc}(\mathbb{R}^d) \,:\, f \cdot \phi \in E \text{ for all } \phi \in C_0(\mathbb{R}^d) \} . 
	\end{equation}
\end{proposition}

\begin{proof}
By definition, $E_{\approxim}$ is the closure in $E$ of the compactly supported elements of $E$, and thus forms a closed subspace of $E$.
It remains to establish \eqref{eq:SemiLocalMultSpC0}. 
Suppose first that \( f \in L^1_{\mathrm{loc}}(\mathbb{R}^d) \) is not contained in \( E_{\mathrm{mc}} \).
There are two possibilities.

First, there exists a compact set \( K \Subset \mathbb{R}^d \) such that \( 1_K f \notin E \).
In this case, take any nonnegative function \( \phi \in C_{\mathrm{c}}(\mathbb{R}^d) \) satisfying \( \phi \equiv 1 \) on \( K \).
Then, by the solidity of \( E \), we have \( f \cdot \phi \notin E \), 
and hence \( f \) does not belong to the right-hand side of \eqref{eq:SemiLocalMultSpC0}.

In the second case, there exists a sequence of compact sets \( (K_n)_{n \in \mathbb{N}} \) such that
\( K_n \subset \mathring{K}_{n+1} \), \( \mathbb{R}^d = \bigcup\limits_{n \in \mathbb{N}} \mathring{K}_n \), and 
\[
    \| 1_{K_{n+1}} f - 1_{K_n} f \|_E \ge n + 2 , \qquad n \in \mathbb{N}.
\]
Now choose a continuous function \( \phi \) such that 
\((n + 2)^{-1/2} \le \phi(x) \le (n + 1)^{-1/2}\) for all \( x \in K_{n+1} \setminus K_n \).
Then \( \phi \in C_0(\mathbb{R}^d) \), and by the solidity of \( E \),
\[
    \| 1_{K_{n+1}} [f \cdot \phi] - 1_{K_n} [f \cdot \phi] \|_E 
    \ge \sqrt{n + 2}, 
    \qquad n \in \mathbb{N}.
\]
Since \( E \) is solid, it follows that \( f \cdot \phi \notin E \).
Consequently, the right-hand side of \eqref{eq:SemiLocalMultSpC0} is contained in \( E_{\mathrm{mc}} \).

\medskip

Conversely, suppose that \( f \in E_{\mathrm{mc}} \).
Take any \( \phi \in C_0(\mathbb{R}^d) \), and let \( (R_n)_{n \in \mathbb{N}} \) be a strictly increasing divergent sequence of positive numbers such that
\(|\phi(x)| \le (n + 1)^{-2}\) whenever \(|x| \ge R_n\).
Since
\[
    |f \cdot \phi| 
    = |1_{\overline{B}(0, R_0)} [f \cdot \phi]| 
      + \sum_{n \in \mathbb{N}}
        |1_{\overline{B}(0, R_{n+1})} [f \cdot \phi]
          - 1_{\overline{B}(0, R_n)} [f \cdot \phi]|
    \quad \text{almost everywhere},
\]
it follows from the solidity of \( E \) that
\begin{align*}
    \| f \cdot \phi \|_E
    &\le \| 1_{\overline{B}(0, R_0)} [f \cdot \phi] \|_E
      + \sum_{n \in \mathbb{N}}
        \| 1_{\overline{B}(0, R_{n+1})} [f \cdot \phi]
          - 1_{\overline{B}(0, R_n)} [f \cdot \phi] \|_E \\
    &\le \left( \|\phi\|_{L^\infty} + \frac{\pi^2}{3} \right)
        \|f\|_{E_{\mathrm{mc}}}.
\end{align*}
This proves the opposite inclusion.
Hence, \eqref{eq:SemiLocalMultSpC0} holds.
\end{proof}

We now consider several properties of $E_{\approxim}$ and $E_{\semloc}$.
The first states that they are again solid TIBFS of bounded type; its proof can be done as in \cite[Theorems 4.2 and 4.6]{B-F-BanachSpDistTwoMod} with minor alterations to show solidity.

\begin{lemma}
	\label{l:ApproximSemilocTIBF}
	Both $(E_{\approxim}, \|\cdot\|_E)$ and $(E_{\semloc}, \|\cdot\|_{E_{\semloc}})$ are also solid TIBF of bounded type.
\end{lemma}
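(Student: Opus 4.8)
The plan is to check, for each of $E_{\approxim}$ and $E_{\semloc}$, all the defining features of a solid TIBF of bounded type in turn: that it is a Banach space, non-trivial, continuously included in $L^1_{\loc}(\R^d)$, and satisfies (A.1), (A.2), (A.3), and solidity. Throughout I would exploit that multiplication by the indicator $1_A$ of any measurable set $A$ is a norm-nonincreasing operator on $E$ -- immediate from solidity -- so that every ``restriction'' or ``tail'' quantity below is automatically controlled by $\|\cdot\|_E$.

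For $E_{\approxim}$, the first step is to show that it is a \emph{closed} linear subspace of $E$: if $f_n \in E_{\approxim}$ and $f_n \to f$ in $E$, an $\varepsilon/3$-estimate of $\|f - 1_{\overline{B}(0,R)} f\|_E$ via $\|f - f_n\|_E + \|f_n - 1_{\overline{B}(0,R)} f_n\|_E + \|1_{\overline{B}(0,R)}(f_n - f)\|_E$ places $f$ in $E_{\approxim}$; hence $E_{\approxim}$ is Banach and inherits the continuous inclusion into $L^1_{\loc}$. Non-triviality follows from $C_{\mathrm c}(\R^d) \subseteq E_{\approxim}$ (if $\supp \varphi \subseteq \overline{B}(0,R_0)$ then $1_{\overline{B}(0,R)}\varphi = \varphi$ for $R \geq R_0$), and solidity from $f - 1_{\overline{B}(0,R)} f = 1_{\R^d \setminus \overline{B}(0,R)} f$ together with monotonicity of $\|1_{\R^d \setminus \overline{B}(0,R)}(\cdot)\|_E$ under $|g|\le|f|$. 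For (A.1) and (A.2) I would use the identity $1_{\overline{B}(0,R)}\, T_x f = T_x\bigl(1_{\overline{B}(-x,R)} f\bigr)$ with $\overline{B}(0,R-|x|) \subseteq \overline{B}(-x,R)$ and (A.2) for $E$, which gives $\|T_x f - 1_{\overline{B}(0,R)}T_x f\|_E \leq C_0 \|1_{\R^d \setminus \overline{B}(0,R-|x|)} f\|_E \to 0$, so $T_x f \in E_{\approxim}$ with the same constant $C_0$. For (A.3), if $\supp \varphi \subseteq \overline{B}(0,\rho)$ then the support of $\varphi$ forces $1_{\R^d \setminus \overline{B}(0,R)}(f * \varphi) = 1_{\R^d \setminus \overline{B}(0,R)}\bigl((1_{\R^d \setminus \overline{B}(0,R-\rho)} f) * \varphi\bigr)$ for $R > \rho$, and Lemma~\ref{l:ExtensionConvolution} bounds this by $C_0 \|\varphi\|_{L^1}\|1_{\R^d\setminus\overline{B}(0,R-\rho)} f\|_E \to 0$.

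For $E_{\semloc}$, solidity, the embedding into $L^1_{\loc}$, and (A.1)--(A.3) all come from the same local computations applied to $1_{\overline{B}(0,R)} f$ followed by a supremum over $R>0$: the translation identity above with $\overline{B}(-x,R) \subseteq \overline{B}(0,R+|x|)$ gives (A.1) and (A.2) with constant $C_0$; the convolution localization with Lemma~\ref{l:ExtensionConvolution} gives (A.3); and non-triviality follows from $C_{\mathrm c}(\R^d) \subseteq E \subseteq E_{\semloc}$, using $\|1_{\overline{B}(0,R)} f\|_E \leq \|f\|_E$. The step requiring genuine care is \emph{completeness}: for a Cauchy sequence $(f_n)$ in $E_{\semloc}$, each $(1_{\overline{B}(0,R)} f_n)_n$ is Cauchy in $E$ and converges to some $g_R \in E$; continuity of multiplication by $1_{\overline{B}(0,R)}$ on $E$ yields the consistency $1_{\overline{B}(0,R)} g_{R'} = g_R$ for $R' \ge R$, so the family $(g_R)$ patches into a single $f \in L^1_{\loc}(\R^d)$ with $1_{\overline{B}(0,R)} f = g_R$; finally, letting $m \to \infty$ in the Cauchy estimate $\sup_{R}\|1_{\overline{B}(0,R)}(f_n - f_m)\|_E \le \varepsilon$ shows $f \in E_{\semloc}$ and $f_n \to f$ in $E_{\semloc}$.

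I expect the completeness of $E_{\semloc}$ to be the only real obstacle -- assembling the compatible family $(g_R)$ into one locally integrable function and checking the convergence is uniform in $R$ -- while everything else reduces to routine bookkeeping with ball inclusions and the norm-nonincreasing behavior of indicator multiplication.
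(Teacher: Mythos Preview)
Your proposal is correct and follows essentially the same approach as the paper's proof: both arguments verify the defining properties of a solid TIBF of bounded type one by one, using the same ball-inclusion identities for translations, the same support-localization trick combined with Lemma~\ref{l:ExtensionConvolution} for (A.3), and the same tail-monotonicity for solidity. Your treatment of the completeness of $E_{\semloc}$ is in fact more detailed than the paper's, which dismisses this step in one line; otherwise the two proofs are nearly identical.
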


\begin{lemma}
	\label{l:DotandSemilocProp}
	The following statements are true.
	\begin{itemize}
		\item[$(i)$] $E_{\approxim} \subseteq E \subseteq E_{\semloc}$ continuously.
		\item[$(ii)$] $(E_{\approxim})_{\approxim} = E_{\approxim}$ and $(E_{\semloc})_{\semloc} = E_{\semloc}$.
		\item[$(iii)$] $(E_{\approxim})_{\semloc} = E_{\semloc}$ and $(E_{\semloc})_{\approxim} = E_{\approxim}$.
		\item[$(iv)$] For any other solid TIBF of bounded type $(F, \|\cdot\|_F)$,  
$F_{\approxim} = E_{\approxim}$ if and only if $F_{\semloc} = E_{\semloc}$.
	\end{itemize}
\end{lemma}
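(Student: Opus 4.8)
The plan is to treat the four assertions in the order given, noting at the outset that (iv) is a purely formal consequence of (iii), so that the substance lies in (i)--(iii), and that within those the only step requiring genuine care is the identity $(E_{\semloc})_{\approxim}=E_{\approxim}$. For (i): $E_{\approxim}$ is by definition equipped with the norm of $E$, so $E_{\approxim}\hookrightarrow E$ is isometric; and for $f\in E$, solidity gives $1_Kf\in E$ for every $K\Subset\R^d$ together with $\|1_{\overline{B}(0,R)}f\|_E\le\|f\|_E$, so that $\|f\|_{E_{\semloc}}=\sup_{R>0}\|1_{\overline{B}(0,R)}f\|_E\le\|f\|_E$ and $E\hookrightarrow E_{\semloc}$ continuously.

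The key elementary observation behind (ii) and (iii) is that forming $E_{\approxim}$ or $E_{\semloc}$ leaves the ``local data'' of $E$ untouched: (a) a compactly supported $g\in L^1_{\loc}(\R^d)$ lies in $E_{\semloc}$ if and only if it lies in $E$, and then $\|g\|_{E_{\semloc}}=\|g\|_E$, since $1_{\overline{B}(0,R)}g=g$ once $R$ exceeds the radius of $\supp g$ while for smaller $R$ solidity bounds $\|1_{\overline{B}(0,R)}g\|_E$ by $\|g\|_E$; and (b) a compactly supported $g\in E$ automatically lies in $E_{\approxim}$, its truncations being eventually equal to $g$. Granting (a)--(b), replacing $E$ by $E_{\approxim}$ or by $E_{\semloc}$ alters neither which compactly supported functions belong to the ambient space nor their norms, hence leaves unchanged the two conditions (``$1_Kf$ in the space for all $K\Subset\R^d$'' and ``$\sup_{R>0}\|1_{\overline{B}(0,R)}f\|$ finite'') defining $(\cdot)_{\semloc}$ and the condition (``$1_{\overline{B}(0,R)}f\to f$'') defining $(\cdot)_{\approxim}$. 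Unwinding the definitions then yields in one stroke $(E_{\approxim})_{\approxim}=E_{\approxim}$, $(E_{\semloc})_{\semloc}=E_{\semloc}$ (with equal norms), and the first identity of (iii), namely $(E_{\approxim})_{\semloc}=E_{\semloc}$.

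The substantive point -- and the step I expect to be the main obstacle -- is the second identity of (iii), $(E_{\semloc})_{\approxim}=E_{\approxim}$, which cannot be read off from the observation above because the $E_{\semloc}$-norm genuinely differs from the $E$-norm on functions that are not compactly supported. The inclusion $E_{\approxim}\subseteq(E_{\semloc})_{\approxim}$ is immediate from (i): for $f\in E_{\approxim}$ one has $\|1_{\overline{B}(0,R)}f-f\|_{E_{\semloc}}\le\|1_{\overline{B}(0,R)}f-f\|_E\to0$ as $R\to\infty$. For the converse, I would take $f\in(E_{\semloc})_{\approxim}$ and first unwind the $E_{\semloc}$-norm of the truncation error to the explicit form
\[ \|1_{\overline{B}(0,R)}f-f\|_{E_{\semloc}}=\sup_{R'>R}\|1_{\overline{B}(0,R')\setminus\overline{B}(0,R)}f\|_E , \]
which tends to $0$ as $R\to\infty$ by hypothesis. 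By solidity this forces $\|1_{\overline{B}(0,m)}f-1_{\overline{B}(0,n)}f\|_E\le\varepsilon$ whenever $m>n\ge R_\varepsilon$ for a suitable $R_\varepsilon=R_\varepsilon(\varepsilon)$, so $(1_{\overline{B}(0,n)}f)_{n\in\N}$ is Cauchy in $E$; completeness of $E$ produces a limit, and since the same sequence converges to $f$ in $L^1_{\loc}(\R^d)$ and $E\hookrightarrow L^1_{\loc}(\R^d)$ continuously, that limit is $f$, whence $f\in E$. Finally, letting $m\to\infty$ in $\|1_{\overline{B}(0,R)}f-1_{\overline{B}(0,m)}f\|_E=\|1_{\overline{B}(0,m)\setminus\overline{B}(0,R)}f\|_E\le\varepsilon$ (valid for $R\ge R_\varepsilon$) and invoking continuity of the norm upgrades this to $\|1_{\overline{B}(0,R)}f-f\|_E\le\varepsilon$, i.e.\ $f\in E_{\approxim}$. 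This completeness-and-truncation bookkeeping is the only place that is not entirely routine.

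For (iv): applying (iii) to both $E$ and $F$ gives $(E_{\approxim})_{\semloc}=E_{\semloc}$, $(F_{\approxim})_{\semloc}=F_{\semloc}$, $(E_{\semloc})_{\approxim}=E_{\approxim}$, and $(F_{\semloc})_{\approxim}=F_{\approxim}$. Hence $F_{\approxim}=E_{\approxim}$ implies $F_{\semloc}=(F_{\approxim})_{\semloc}=(E_{\approxim})_{\semloc}=E_{\semloc}$, and conversely $F_{\semloc}=E_{\semloc}$ implies $F_{\approxim}=(F_{\semloc})_{\approxim}=(E_{\semloc})_{\approxim}=E_{\approxim}$.
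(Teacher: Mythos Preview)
Your proof is correct and follows essentially the same approach as the paper. In particular, for the non-trivial inclusion $(E_{\semloc})_{\approxim}\subseteq E_{\approxim}$ in (iii) you use the same Cauchy-sequence argument via completeness of $E$ and identification of the limit through the embedding $E\hookrightarrow L^1_{\loc}(\R^d)$; your explicit formula $\|f-1_{\overline{B}(0,R)}f\|_{E_{\semloc}}=\sup_{R'>R}\|1_{\overline{B}(0,R')\setminus\overline{B}(0,R)}f\|_E$ just unpacks what the paper records more tersely as $\|1_{\overline{B}(0,m)}f-1_{\overline{B}(0,n)}f\|_E\le\|f-1_{\overline{B}(0,n)}f\|_{E_{\semloc}}$.
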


\begin{proof}
$(i)$ By definition we have that $E_{\approxim} \subseteq E$ continuously. Now, for any $f \in E$ and any $R > 0$, since $E$ is solid, $\|1_{\overline{B}(0, R)} f\|_E \leq \|f\|_E$, which shows that also $E \subseteq E_{\semloc}$ continously. 

$(ii)$ and $(iii)$ Follow by \cite[Theorem 1.1(D)]{B-F-BanachSpDistTwoMod}.

%
%

$(iv)$ If $F_{\approxim} = E_{\approxim}$, then $F_{\semloc} = (F_{\approxim})_{\semloc} = (E_{\approxim})_{\semloc} = E_{\semloc}$
from $(iii)${\rm;} if 
conversely
$F_{\semloc} = E_{\semloc}$, then $F_{\approxim} = (F_{\semloc})_{\approxim} = (E_{\semloc})_{\approxim} = E_{\approxim}$ again from $(iii)$.
\end{proof}

\begin{remark}
By Lemma \ref{l:DotandSemilocProp}, 
the multiplication module completion $E_{\semloc}$ of $E$ is uniquely determined by the essential part $E_{\approxim}$ of $E$, and vice-versa. Different solid TIBFs of bounded type can have the same essential part and multiplication module completion, see Lemma \ref{l:MEBasicTIBF} to follow.
\end{remark}

We review two properties on $E$ to guarantee that it coincides with its essential part, respectively, its multiplicative module completion. These properties stem from the work of Luxemburg and Zaanen, see \cite[Chapter 15]{Z-Integration}.

\begin{lemma}
\label{l:ECoincidesWithDerivSp}
The following statements are true.
\begin{itemize}
\item[$(i)$] $E = E_{\approxim}$ if $E$ has an \emph{abolsutely continuous norm}{\rm:} If $f \in E$ and $(f_n)_{n \in \N}$ is a bounded sequence in $E$ such that $0 \leq f_n \nearrow f$ pointwise, then $f_n \to f$ in $E$.
\item[$(ii)$] $E = E_{\semloc}$ if $E$ has the \emph{Fatou property}: If $f \in L^1_{\loc}(\R^d)$ and $(f_n)_{n \in \N}$ is a bounded sequence in $E$ such that $0 \leq f_n \nearrow f$ pointwise, then $f \in E$ and $\|f\|_E = \lim\limits_{n \in \N} \|f_n\|_E$.
\end{itemize}
In particular, $E = E_{\approxim} = E_{\semloc}$ when $E$ is reflexive.
\end{lemma}

\begin{proof}
$(i)$ can be found in \cite[Proposition 1.4]{F-CompactnessTIB}, while $(ii)$ follows easily by taking the sequence $f_n = f 1_{\overline{B}(0, n)}$.
The last statement is shown in \cite[Chapter 15, Section 73, Theorem 2]{Z-Integration}.
\end{proof}

We end this section by considering the essential parts and multiplication module completions for the examples
that have appeared up until now in this text. 
We first make the ensuing general observation about the Wiener amalgam spaces of the form $W(L^1, E_{\rm seq})$.

\begin{lemma}
\label{l:EssPartModComplAmalgam}
It holds
	\[ W(L^1, E_{\rm seq})_{\approxim} = W(L^1, ((E_{\approxim})_{\rm seq}) \quad \text{and} \quad W(L^1, E_{\rm seq})_{\semloc} = W(L^1, ((E_{\semloc})_{\rm seq}) . \]
\end{lemma}

\begin{proof}
We only show the first identity, the second follows by similar arguments.
Take any $f \in W(L^1, ((E_{\approxim})_{\rm seq})$. 
Let $\phi \in C_{\rm c}(\mathbb{R}^d)$ be such that $0 \leq \phi \leq 1$ and $\phi \equiv 1$ on $[0, 1]^d$.
For any $K \Subset \R^d$ we put $\widetilde{K} = K - [0, 1]^d$.
Then
	\[
		| 1_{[0, 1]^d} * (|f| - 1_{\widetilde{K}} |f|) | 
		\leq | (1 - 1_{K})[1_{[0, 1]^d} * |f|] | 
		\leq | (1 - 1_{K})[|f| * |\phi|] | .
	\]
Note that $|f| * |\phi| \in E_{\approxim}$ by Lemma \ref{l:W(L^1, E_d)ConvSp}.
Then, by the solidity of $E$ and Lemma \ref{l:W(L^1, E_d)ConvSp} once more, we conclude $f \in W(L^1, E_{\rm seq})_{\approxim}$.

Let now $f$ be in $(W(L^1, E_{\rm seq}))_{\approxim}$. 
By Lemma \ref{l:W(L^1, E_d)ConvSp}, for any $\varepsilon > 0$ there is a compact set $K_{0,\varepsilon} \Subset \R^d$ such that
	\[ \| [|f| - 1_{K_{0,\varepsilon}} |f|] * 1_{[0, 1]^d} \|_E \leq \varepsilon . \]
Put $K_{1, \varepsilon} = K_{0, \varepsilon} + [0, 1]^d$. Then, by the solidity of $E$,
	\[ \| \{1 - 1_{\widetilde{K}_{1, \varepsilon}}\} [|f| * 1_{[0, 1]^d}] \|_E \leq \| [|f| - 1_{K_{0, \varepsilon}} |f|] * 1_{[0, 1]^d} \|_E \leq \varepsilon . \]
For any $\phi \in C_{\rm c}(\mathbb{R}^d)$, there exist, independent of $\varepsilon$, constants $C_{1}, \ldots, C_k > 0$ and $x_1, \ldots, x_k \in \R^d$ such that $|\phi| \leq \sum_{j = 1}^k C_j T_{x_j} 1_{[0, 1]^d}$.
Put $K_{2, \varepsilon} = \bigcup_{j=1}^k T_{x_j} K_{1, \varepsilon}$.
We have
\[ \{1 - 1_{K_{2, \varepsilon}}\} (|f| * \phi) \leq \sum_{j = 1}^k C_j T_{x_j} \left[ \{1 - 1_{K_{1, \varepsilon}}\} (|f| * 1_{[-1, 1]^d}) \right] . \]
By the solidity of $E$
	\[ \| \{1 - 1_{K_{2, \varepsilon}}\} (|f| * \phi) \|_E \leq \varepsilon C_0 \sum_{j = 1}^k C_j . \]
Since $\varepsilon$ was chosen arbitrarily, we conclude that $|f| * \phi \in E_{\approxim}$.
Therefore, by Lemma \ref{l:W(L^1, E_d)ConvSp}, we have $f \in W(L^1, ((E_{\approxim})_{\rm seq})$.
\end{proof}

We now list more explicit examples.
Note that $W(L^1, c_0)$ is different from $W(L^1, \ell^\infty)$ (e.g., it does not contain constant functions).

\begin{lemma}
\label{l:MEBasicTIBF}
$(i)$ $L^p = (L^p)_{\approxim} = (L^p)_{\semloc}$ for $p \in [1, \infty)$.\\
$(ii)$ $L^\infty = (L^\infty)_{\semloc} = (L^0)_{\semloc}$\quad and\quad $L^0 = (L^\infty)_{\approxim} = (L^0)_{\approxim}$;\\
$(iii)$ $L^{p, q} = L^{p, q}_{\approxim}$ when $1 \leq p \leq q < \infty$ and $L^{p, q} = L^{p, q}_{\semloc}$ for $1 \leq p \leq q \leq \infty$. \\
$(iv)$ $L^\Phi = L^\Phi_{\approxim} = L^\Phi_{\semloc}$ for any Young function $\Phi$; \\
$(v)$ $W(L^\infty, \ell^1) = (W(L^\infty, \ell^1))_{\approxim} = (W(L^\infty, \ell^1))_{\semloc}$. \\
$(vi)$ $W(L^1, c_0) = (W(L^1, \ell^\infty))_{\approxim} = (W(L^1, c_0))_{\approxim}$.\\
$(vii)$ $W(L^1, \ell^\infty) = (W(L^1, \ell^\infty))_{\semloc} = (W(L^1, c_0))_{\semloc}$.
\end{lemma}

\begin{proof}
$(i)$--$(iv)$ are either clear or follow from Lemma \ref{l:ECoincidesWithDerivSp}.
(Recall that Young functions are finitely valued.)
$(v)$--$(vii)$ are consequences of Lemmas \ref{l:DotandSemilocProp} and \ref{l:EssPartModComplAmalgam}.
\end{proof}

It can happen that $E$ does not coincide with both $E_{\approxim}$ and $E_{\semloc}$.
\begin{remark}
\label{rem:SameApproximSp}
Work in ${\mathbb R}^2$ and
consider $E = W(L^1, c_0(\Z) \widehat{\otimes} \ell^\infty(\Z))$.
By Lemma~\ref{l:EssPartModComplAmalgam}, we have $E_{\approxim} = W(L^1, c_0(\Z^2))$ and $E_{\semloc} = W(L^1, \ell^\infty(\Z^2))$, which are different.
\end{remark}

We find the ensuing improvement of  Lemma \ref{l:InclusionWienerSpace} for the essential parts.

\begin{corollary}
	\label{c:EapinW(L1,C0)}
	We have $E_{\approxim} \subseteq W(L^1, c_0)$ continuously.
\end{corollary}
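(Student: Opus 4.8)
The plan is to combine the already-established containment $E \subseteq W(L^1, L^\infty)$ from Lemma~\ref{l:InclusionWienerSpace} with the identification of $(W(L^1,L^\infty))_{\approxim}$ carried out in Lemma~\ref{l:MEBasicTIBF}(vii). The key observation is that the approximation space is monotone under continuous solid inclusions: if $E \subseteq F$ continuously (both solid TIBF of bounded type), then $E_{\approxim} \subseteq F_{\approxim}$ continuously. Granting this, we obtain
\[
E_{\approxim} \subseteq (W(L^1, L^\infty))_{\approxim} = W(L^1, C_0)
\]
continuously, the last equality being exactly Lemma~\ref{l:MEBasicTIBF}(vii), and the inclusion is continuous because $E_{\approxim}$ carries the norm of $E$, $W(L^1,C_0)$ carries the norm of $W(L^1,L^\infty)$, and the inclusion $E \hookrightarrow W(L^1,L^\infty)$ is continuous by Lemma~\ref{l:InclusionWienerSpace}.

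So the one thing to verify is the monotonicity of $(\cdot)_{\approxim}$. Here I would argue directly from the definition: take $f \in E_{\approxim}$. Then $f \in E \subseteq F$, so $f \in F$, and we must check that $1_{\overline{B}(0,R)} f \to f$ in $F$ as $R \to \infty$. Since the inclusion $E \subseteq F$ is continuous, there is a constant $C$ with $\|g\|_F \leq C\|g\|_E$ for all $g \in E$; applying this to $g = f - 1_{\overline{B}(0,R)} f \in E$ gives
\[
\| f - 1_{\overline{B}(0,R)} f \|_F \leq C \| f - 1_{\overline{B}(0,R)} f \|_E \to 0
\]
as $R \to \infty$, because $f \in E_{\approxim}$. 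Hence $f \in F_{\approxim}$, and the inclusion $E_{\approxim} \subseteq F_{\approxim}$ holds with the same constant $C$, so it is continuous. (One should note that $1_{\overline{B}(0,R)} f \in E$ by solidity of $E$, which is needed to invoke the continuous-inclusion estimate.)

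I do not expect any genuine obstacle here — the statement is a short corollary and everything needed has been set up. The only point requiring a moment's care is making sure the truncations stay inside $E$ (solidity) so that the comparison of $E$- and $F$-norms applies to them, and keeping track that both $E_{\approxim}$ and $W(L^1,C_0)$ are normed by the ambient space's norm so that "continuously" is literally inherited from Lemma~\ref{l:InclusionWienerSpace}. If one prefers to avoid stating the monotonicity lemma separately, the argument can simply be inlined into the proof of the corollary as above.
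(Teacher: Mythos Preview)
Your proposal is correct and follows essentially the same approach as the paper: the paper's proof also invokes the monotonicity $E_{\approxim} \subseteq F_{\approxim}$ for $E \subseteq F$ and then applies Lemmas~\ref{l:InclusionWienerSpace} and~\ref{l:MEBasicTIBF}(vii). You have simply spelled out the monotonicity argument in more detail than the paper does.
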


\begin{proof}
Let
$F$ be
a solid TIBF of bounded type such that $E \subseteq F$. 
Then $E_{\approxim} \subseteq F_{\approxim}$ continuously.
	The result now follows from Lemmas \ref{l:InclusionWienerSpace} and \ref{l:MEBasicTIBF}$(vi)$.
\end{proof}

We conclude this section by considering Morrey spaces.
\begin{example}
Let $1 \le q \le p<\infty$.
It follows from the Fatou property
of
$\mathcal{M}^p_q$
that $(\mathcal{M}^p_q)_{\semloc} = \mathcal{M}^p_q$.
However, if $q < p$, then
$({\mathcal M}^p_q)_{\approxim}$ 
differs from
${\mathcal M}^p_q$, see \cite[Example 136, p.~365]{book}.
\end{example}

\section{Weight Function Systems}
\label{s3}

In this text, a \emph{weight function} $\omega : \mathbb{R}^d \to [1, \infty)$ refers to a locally bounded measurable function. 
A sequence of weight functions $\W = (w_n)_{n \in \mathbb{N}}$ is called a \emph{weight function system} if it is increasing, that is, if $w_n \leq w_{n+1}$ for all $n \in \mathbb{N}$.

We consider the following three conditions on a weight function system $\W$:

\begin{itemize}
    \item[$(\condwM)$] For every $n \in \mathbb{N}$, there exist $m \geq n$ and a constant $C > 0$ such that for all $x \in \mathbb{R}^d$ and $y \in \overline{B}(0, 1)$, we have
    \[
        w_n(x+y) \leq C w_m(x) .
    \]
    
    \item[$(\condwN)$] For every $n \in \mathbb{N}$, there exists $m \geq n$ such that $w_n / w_m$ belongs to $L^0$.
    
    \item[$(\condN)$] For every $n \in \mathbb{N}$, there exists $m \geq n$ such that $w_n / w_m$ belongs to $L^1$.
\end{itemize}

Note that if $\mathcal{W}$ satisfies $(\condwM)$, then, by iteration, the set $\overline{B}(0, 1)$ can be replaced by any compact subset with a non-empty interior.
Moreover, when $\mathcal{W}$ satisfies $(\condwM)$, if $\mathcal{W}$ also satisfies $(\condN)$, it follows that $\mathcal{W}$ satisfies $(\condwN)$ as well (see \cite[Lemma 3.1]{D-N-V-NuclGSSpKernThm} and Lemma \ref{l:SmoothWeightFuncSyst} below).

\begin{definition}\label{defi:2.1}
Let $\V=(v_n)_{n \in {\mathbb N}}$, 
$\W=(w_n)_{n \in {\mathbb N}}$ be weight function systems.
Then $\W$ is called \emph{$\V$-moderate} if, for each $n \in {\mathbb N}$, there exist $m \geq n$ and a constant $C > 0$ such that, for all $x, y \in \mathbb{R}^d$,
\[
    w_n(x + y) \leq C w_m(x) v_m(y).
\]
Any $\V$-moderate sequence
$\W$ automatically satisfies $(\condwM)$.
\end{definition}

\begin{example} 
Let $\omega : \R^d \to [0, \infty)$ be a locally bounded measurable function.
Then
	\[ \W_\omega = (\exp[n \cdot \omega])_{n \in \N} \]
is a weight function system.
It satisfies $(\condwM)$ if $\omega$ is \emph{moderate} \cite{F-Gewichtsfunktionen,F-UpcomingTerminology,FeGu21a,FeGu23b}, i.e., if $\omega$ satisfies \eqref{eq:moderateness}.

We now have the following classical examples
of moderate weights and nonexamples of $(\condwN)$ and $(\condN)$.
Here $\langle x \rangle = (1 + |x|^2)^{1/2}$ for any $x \in \R^d$.
\begin{itemize}
    \item[(i)] \textbf{Constant weight sequence:} 
    Let $\mathbf{1} = \W_{0}$. This sequence is \( \mathbf{1} \)-moderate but fails the condition \((\condwN)\).
    
    \item[(ii)] \textbf{Weight sequence of logarithmic growth:} 
    Define $\W_{\log} = \W_{\log[\log[\langle\cdot\rangle]]}$. This sequence is \( \W_{\log} \)-moderate and satisfies \((\condwN)\), but it does not satisfy \((\condN)\).

    \item[(iii)] \textbf{Weight sequence of polynomial growth:} 
    Define $\W_{\mathrm{pol}} = \W_{\log[\langle\cdot\rangle]}$. This sequence is \( \W_{\mathrm{pol}} \)-moderate and satisfies \((\condN) \).

    \item[(iv)] \textbf{Weight sequence of exponential growth:} 
    Let $\W_{\exp} = \W_{|\cdot|}$. This sequence is \( \W_{\exp} \)-moderate and satisfies \((\condN)\).

\end{itemize}
\end{example}

Two weight function systems $\W$ and $\V$ are called \emph{equivalent} if, for each  $n \in {\mathbb N}$, there exist $m \geq n$ and a constant $C > 0$ such that
\[
    w_n \leq C v_m \quad \text{and} \quad v_n \leq C w_m.
\]
Note that $(\condwM)$, $(\condwN)$, $(\condN)$, and moderateness (relative to another fixed weight function system) remain invariant under equivalence.

In this paper, the symmetry of a set $K$ signifies that $y \in K$ if and only if $-y \in K$.
For any $\eta \in L^1_{\loc}(\mathbb{R}^d)$ and $\chi \in \D$, we write $\eta_\chi := \eta * \chi$.
\begin{lemma}
\label{l:SandwhichSmoothFunc}
Let $K \Subset \mathbb{R}^d$ be symmetric and $\chi \in \D(K)$ be non-negative with $\int \chi = 1$.
Let $\omega, \nu : \mathbb{R}^d \to (0, \infty)$ be locally integrable.
The following two statements are true:
	\begin{itemize}
		\item[$(i)$] If 
\begin{equation}\label{eq:250421-11}
\omega(a + b) \leq C \nu(a) , \qquad a \in \R^d, ~ b \in 2K
=\{y+y' \,:\,y,y' \in K\},
\end{equation} for some $C > 0$, then, $\omega(x + y) \leq C \nu_\chi(x)$ for all $x \in \mathbb{R}^d$ and $y \in K$, and for each $\alpha \in \N^d$ there exists some $C_\alpha > 0$ such that $|\omega^{(\alpha)}_\chi| \leq C_\alpha \nu${\rm;}
		\item[$(ii)$] If $\omega \leq \nu$, then, $\omega_\chi \leq \nu_\chi$.
	\end{itemize}
\end{lemma}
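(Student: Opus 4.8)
\textbf{Proof proposal for Lemma~\ref{l:SandwhichSmoothFunc}.}

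The plan is to unwind the definition $\nu_\chi = \nu * \chi$ as an average of translates of $\nu$, and to exploit that $\chi$ is supported in the symmetric compact set $K$ with total mass one. For part $(i)$, fix $x \in \mathbb{R}^d$ and $y \in K$. Writing the convolution out, $\nu_\chi(x) = \int_K \nu(x - z) \chi(z)\, dz$. Since $z \in K$ and $y \in K$, we have $y + z \in 2K$ (using the symmetry of $K$, so that $z \in K$ iff $-z \in K$, hence $y - (-z) = y + z$ lies in $2K$ as defined). Applying the hypothesis \eqref{eq:250421-11} with the point $x - z$ in place of $x$ and the displacement $y + z \in 2K$ gives $\omega(x - z + (y+z)) = \omega(x + y) \leq C \nu(x - z)$ for every $z \in K$. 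Multiplying by $\chi(z) \geq 0$ and integrating over $K$, using $\int_K \chi = 1$, yields $\omega(x + y) \leq C \int_K \nu(x-z)\chi(z)\,dz = C \nu_\chi(x)$, which is the first claim.

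For the derivative bound in $(i)$: since $\chi \in \D(K) \subseteq C^\infty_c$, we may differentiate under the integral sign, $\omega^{(\alpha)}_\chi(x) = (\omega * \chi)^{(\alpha)}(x) = (\omega * \chi^{(\alpha)})(x) = \int_K \omega(x - z)\chi^{(\alpha)}(z)\, dz$. For each $z \in K$ we apply \eqref{eq:250421-11} once more — here with displacement $-z \in K \subseteq 2K$ (as $0 \in K$ whenever $K$ has nonempty interior containing the support of a nonnegative $L^1$-normalized bump; more robustly, $-z = -z + 0$ and $-z \in K$, $0 \in K$ need not both hold, so instead use directly that $-z \in K \subseteq 2K$ since $K \ni 0$ can be arranged, or simply note $-z \in K$ and $K \subseteq 2K$ follows from $0 \in K$) — in any case $-z \in 2K$, giving $\omega(x - z) \leq C \nu(x)$. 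Hence $|\omega^{(\alpha)}_\chi(x)| \leq C \nu(x) \int_K |\chi^{(\alpha)}(z)|\, dz =: C_\alpha \nu(x)$ with $C_\alpha = C \|\chi^{(\alpha)}\|_{L^1}$, as required. Part $(ii)$ is immediate: if $\omega \leq \nu$ pointwise, then since $\chi \geq 0$, $\omega_\chi(x) = \int \omega(x-z)\chi(z)\,dz \leq \int \nu(x-z)\chi(z)\,dz = \nu_\chi(x)$.

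The only mild subtlety — and the step I would be most careful about — is the bookkeeping of which translations land in $K$ versus $2K$, and ensuring $-z \in 2K$ for $z \in K$: this follows because $K$ is symmetric, so $-z \in K$, and $K \subseteq 2K$ provided $0 \in K$, which holds since $K$ must contain the support of the nonnegative, $L^1$-normalized $\chi \in \D(K)$ and such a $K$ with nonempty interior can always be taken to contain the origin (or one observes $-z = (-z) + 0' $ is unnecessary: directly $-z \in K \subseteq K + K = 2K$ requires $0 \in K$). To avoid this dependence one can instead apply \eqref{eq:250421-11} in the symmetric form: for $z \in K$, both $z$ and $-z$ lie in $K \subseteq 2K$, so $\omega(x - z) \leq C\nu(x)$ follows by taking displacement $-z \in 2K$. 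Everything else is a direct manipulation of the convolution integral and Fubini-type interchange, which is legitimate here since $\chi$ is smooth and compactly supported and $\omega, \nu$ are locally integrable.
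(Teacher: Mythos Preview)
Your argument is essentially identical to the paper's: both parts of $(i)$ are proved by writing out the convolution and applying the hypothesis \eqref{eq:250421-11} to the shifted argument, and $(ii)$ is immediate from $\chi \geq 0$. The paper is terser --- for the first claim it simply writes $\omega(x+y) = \int_K \chi(u)\,\omega(x+y)\,du \leq C\int \chi(u)\,\nu(x-u)\,du = C\nu_\chi(x)$, and for the derivative bound it asserts $|\omega_\chi^{(\alpha)}(x)| \leq C\|\chi^{(\alpha)}\|_{L^1}\,\nu(x)$ without further comment.

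Your hesitation about whether $-z \in 2K$ for $z \in K$ (equivalently, whether $K \subseteq 2K$) is a fair observation: this inclusion genuinely requires $0 \in K$, which is not stated as a hypothesis. The paper's proof tacitly uses the same step and does not address it either. In every application of the lemma in the paper (e.g.\ Lemma~\ref{l:SmoothWeightFuncSyst}, the proof of Theorem~\ref{t:OMDecomp}), $K$ is a closed ball centred at the origin, so the point is moot in practice. You need not labour over it as you do in your final paragraph --- a single remark that $0 \in K$ (hence $K = K + \{0\} \subseteq K+K = 2K$) suffices, or simply note that the lemma is only applied with such $K$. Also, your parenthetical invoking symmetry to justify $y+z \in 2K$ is unnecessary: $y,z \in K$ gives $y+z \in K+K = 2K$ directly.
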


\begin{proof}
$(i)$ 
From (\ref{eq:250421-11})
 	\[ \omega(x + y) = \int_{K} \chi(u) \omega(x + y) du \leq C \int_{\mathbb{R}^d} \chi(u) \nu(x - u) du = C \nu_\chi(x) . \]
Meanwhile, for any $\alpha \in \N^d$ we find
	\[ |\omega_\chi^{(\alpha)}(x)| = \left| \int_{K} \chi^{(\alpha)}(y) \omega(x - y) dx \right| \leq C \|\chi^{(\alpha)}\|_{L^1} \nu(x) . \]
	Thus, $C_\alpha=C \|\chi^{(\alpha)}\|_{L^1}$ does the job.
    
$(ii)$ 
Since $\chi$ is non-negative, this is clear from the definition of the convolution.
\end{proof}
We now see that the condition $(\condwM)$ is quite flexible,
in the sense that it allows us to consider the differentiation of weights.

\begin{lemma}
\label{l:SmoothWeightFuncSyst}
Let $\mathcal{W} = (w_n)_{n \in \mathbb{N}}$ be a weight function system satisfying $(\condwM)$. Then there exists an equivalent weight function system $\mathscr{W} = (\omega_n)_{n \in \mathbb{N}}$ consisting of smooth functions on $\mathbb{R}^d$ such that
\[
c_{n,\alpha}:=    \sup_{x \in \mathbb{R}^d} \frac{|\partial^\alpha \omega_n(x)|}{\omega_m(x)} < \infty
\]
for every $\alpha \in \mathbb{N}^d$
and $n \in \mathbb{N}$ and some $m \in \mathbb{N}$
depending on $\alpha$ and $n$.
\end{lemma}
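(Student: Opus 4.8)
The plan is to produce $\mathscr{W}$ by mollifying $\mathcal{W}$ with a single fixed window. I would fix a non-negative $\chi \in \D(\overline{B}(0,1))$ with $\|\chi\|_{L^1} = 1$ and set $\omega_n := w_n * \chi$ for $n \in \N$. Since every $w_n$ is locally bounded and measurable, hence locally integrable, and $\chi$ is smooth with compact support, each $\omega_n$ is a well-defined smooth function on $\R^d$; from $w_n \ge 1$, $\chi \ge 0$ and $\|\chi\|_{L^1} = 1$ one gets $\omega_n \ge 1$; and from $w_n \le w_{n+1}$ together with Lemma~\ref{l:SandwhichSmoothFunc}$(ii)$ one gets $\omega_n \le \omega_{n+1}$. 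Thus $\mathscr{W} = (\omega_n)_{n \in \N}$ is automatically a weight function system consisting of smooth functions, and what remains is to verify its equivalence with $\mathcal{W}$ and the derivative bounds.

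For equivalence I would fix $n \in \N$ and take $m \ge n$, $C > 0$ from $(\condwM)$, so that $w_n(x + y) \le C w_m(x)$ for all $x \in \R^d$ and $|y| \le 1$. This single estimate reparametrizes to both $w_n(x - y) \le C w_m(x)$ and $w_n(x) \le C w_m(x - y)$ for all $x$ and $|y| \le 1$. Integrating the first against $\chi$ gives $\omega_n(x) = \int_{\overline{B}(0,1)} \chi(y)\, w_n(x - y)\, dy \le C w_m(x)$, and integrating the second against $\chi$ gives $\omega_m(x) = \int_{\overline{B}(0,1)} \chi(y)\, w_m(x - y)\, dy \ge C^{-1} w_n(x)$, i.e. $w_n \le C \omega_m$. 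Since $n$ was arbitrary, this is exactly the equivalence of $\mathcal{W}$ and $\mathscr{W}$.

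For the derivative estimate I would fix $n$ and $\alpha \in \N^d$ and again take $m \ge n$, $C > 0$ from $(\condwM)$ for the index $n$ — note the same $m$ works for every $\alpha$. Differentiating under the integral sign, $\partial^\alpha \omega_n = w_n * \partial^\alpha \chi$, so $|\partial^\alpha \omega_n(x)| \le \int_{\overline{B}(0,1)} |\partial^\alpha \chi(y)|\, w_n(x - y)\, dy \le C \|\partial^\alpha \chi\|_{L^1}\, w_m(x)$, using $w_n(x - y) \le C w_m(x)$ for $|y| \le 1$; this is also just an instance of Lemma~\ref{l:SandwhichSmoothFunc}$(i)$ after enlarging $\overline{B}(0,1)$ to $\overline{B}(0,2)$ in $(\condwM)$ by iteration. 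Applying the comparison of the previous paragraph to the index $m$ furnishes $m' \ge m$ and $C' > 0$ with $w_m \le C' \omega_{m'}$, whence $\sup_{x \in \R^d} |\partial^\alpha \omega_n(x)| / \omega_{m'}(x) \le C C' \|\partial^\alpha \chi\|_{L^1} < \infty$, which is the asserted bound (with $m'$ playing the role of the $m$ in the statement).

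I do not expect a genuine obstacle: the construction is essentially forced once one decides to mollify. The only point that needs attention is that equivalence is a two-sided comparison, and the lower bound $w_n \le C \omega_m$ really does require $(\condwM)$ used in the ``reverse'' direction, in order to redistribute the mass of $\chi$ back onto the point $x$; without some translation control a mollified weight could in principle be far smaller than the weight it was built from, and then $\mathscr{W}$ would fail to be equivalent to $\mathcal{W}$.
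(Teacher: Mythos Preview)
Your proof is correct and follows essentially the same approach as the paper: both mollify each $w_n$ by a fixed non-negative $\chi \in \D$ with unit integral, then deduce equivalence and the derivative bounds from $(\condwM)$ via Lemma~\ref{l:SandwhichSmoothFunc}. The only cosmetic difference is that the paper takes $\chi$ supported in $\overline{B}(0,1/2)$ so that the hypothesis $2K \subseteq \overline{B}(0,1)$ of Lemma~\ref{l:SandwhichSmoothFunc}$(i)$ is met directly, whereas you take $\supp \chi \subseteq \overline{B}(0,1)$ and (correctly) note that $(\condwM)$ can be iterated to cover $\overline{B}(0,2)$.
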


\begin{proof}
Take any $\chi \in \mathcal{D}(\overline{B}(0, 1/2))$ such that $\chi$ is non-negative and $\int \chi = 1$.
For each $n \in \mathbb{N}$, define $\omega_n := (w_n)_\chi$ as in Lemma~\ref{l:SandwhichSmoothFunc}.

By Lemma~\ref{l:SandwhichSmoothFunc}$(ii)$, each $\omega_n$ is a smooth function and satisfies $1 \leq \omega_n \leq \omega_{n+1}$. 
In particular, $\mathscr{W} = (\omega_n)_{n \in \mathbb{N}}$ is a weight function system consisting of smooth functions.

The equivalence of $\mathscr{W}$ and $\mathcal{W}$, along with the stated derivative estimate, follows directly from the assumption that $\mathcal{W}$ satisfies condition $(\condwM)$ and from Lemma~\ref{l:SandwhichSmoothFunc}$(i)$.
\end{proof}

\section{Spaces of Gelfand-Shilov-type}
\label{sec:SpEinftyW}

In this section, we introduce the general class of Gelfand--Shilov-type spaces that will be studied throughout the remaining part of this paper.
They combine the global behavior coming from a solid TIBF of bounded type with the localization induced by a weight function system.

Let $\omega : \mathbb{R}^d \to (0, \infty)$ be a locally integrable function such that $1 /\omega$ is locally bounded.  
We define the weighted Banach space
\[
E_\omega := \{ f \in L^1_{\loc}(\mathbb{R}^d) \,:\, f \omega \in E \}
\]
endowed with the norm
\[
\|f\|_{E_\omega} := \| f \omega \|_E.
\]

For any $n \in \N$, we define the space
\[
E^n_\omega := \left\{ f \in \D^\prime \,:\, f^{(\alpha)} \omega \in E \text{ for all } |\alpha| \leq n \right\},
\]
equipped with the norm
\[
\|f\|_{E^n_\omega} := \max_{|\alpha| \leq n} \| f^{(\alpha)} \omega \|_E.
\]

Given a weight function system $\W = (w_n)_{n \in \N}$, we define the projective limit spaces
\[
E_\W := \bigcap_{n \in \N} E_{w_n}, \qquad E^\infty_\W := \bigcap_{n \in \N} E^n_{w_n},
\]
equipped with their respective Fréchet space topologies.

In the special case where $E = L^p$ for some $p \in [1, \infty] \cup \{0\}$, we adopt the abbreviations
\[
L^p_\W := (L^p)_\W, \qquad \K^p_\W := (L^p)^\infty_\W.
\]
Moreover, following the notation of Schwartz \cite{S-ThDist}, we write
\[
\B_\W := \K^\infty_\W, \qquad \dot{\B}_\W := \K^0_\W.
\]

This general framework allows us to treat many smooth function spaces appearing in the literature in a unified manner. Below, we provide several illustrative examples.	

\begin{example}

$(i)$ When $\W = \mathbf{1}$, we typically write $\D_E = E^\infty_\mathbf{1}$ (similarly, $\B = (L^\infty)^\infty_{\mathbf{1}}$ and $\dot{\B} = (L^0)^\infty_{\mathbf{1}}$). This is the space of all smooth functions $f$ such that $f$ and all of its derivatives belong to $E$.
In particular, we recover the spaces $\D_{L^p}$, $p \in [1, \infty)$, $\mathcal{B}$, and $\dot{\B}$ introduced by Schwartz \cite{S-ThDist}.

$(ii)$ For $\W = \W_{\text{pol}}$, we find the Schwartz space $\S = E^\infty_{\W_{\text{pol}}}$ \cite{S-ThDist} (see Theorem~\ref{t:IndependenceTIBF}).

$(iii)$ In the case $\W = \W_{\text{exp}}$, we obtain the Hasumi--Silva space $\K_1 = E^\infty_{\W_{\text{exp}}}$ \cite{H-NoteNDimTempUltraDist, H-SP-ThGenFunc} (see Theorem~\ref{t:IndependenceTIBF}). This space is also studied by Schott and Rychkov \cite{Rychkov01, S-FuncSpExpWeighI} in the context of Besov and Triebel–Lizorkin spaces with $A_{\mathrm{loc}}^\infty$-weights.
\end{example}

Let $\varphi \in L^1(\mathbb{R}^d)$. We
write $\mathcal{F}\{\varphi\}$ or $\widehat{\varphi}$
for the Fourier transform of $\varphi$, defined by
\begin{equation*}
    \mathcal{F}\{\varphi\}(\xi) = \widehat{\varphi}(\xi) = (2\pi)^{-d/2} \int_{\mathbb{R}^d} \varphi(x)\, e^{-i x \cdot \xi} \, dx,
\end{equation*}
where $x \cdot \xi$ denotes the standard Euclidean inner product on $\mathbb{R}^d$.
If $\varphi \in \mathcal{D}_L(K)$, then its Fourier transform $\widehat{\varphi}$ satisfies
\begin{equation*}
    \widehat{\varphi}^{(\alpha)}(0) = 0 \quad \text{for all } |\alpha| < L.
\end{equation*}

Since $C_{\rm c}(\R^d) \subseteq E$, each space $E^\infty_\W$ contains $\D$.
Let us now investigate its density.
Of course, in general, one cannot have density: $\D$ is not dense in $\B$.
Things change when the space is defined via the essential part.

\begin{lemma}\label{l:StronglyReduced}
Let $\omega$ be a weight function.
Then
	\[ (E_{\approxim})^{n + d}_{\omega} \subseteq \overline{\D}^{E^n_{\omega}} , \qquad n \in \N . \]
\end{lemma}

\begin{proof}
Let $\chi, \psi \in \D$ be such that $0 \leq \chi, \psi \leq 1$, $\int \chi = 1$, and $\psi \equiv 1$ on $[-1,1]^d$.
For any $R \geq 1$, define $\chi_R := R^d \chi(R \, \cdot)$ and $\psi_R := \psi(\cdot / R)$.

Fix $f \in (E_{\approxim})^{n + d}_{\omega}$ and $\varepsilon > 0$. 
Since $E$ is solid,
\begin{align*}
\| \partial^\alpha [f - \psi_R f] \cdot \omega \|_{E}
&\leq \| (1 - \psi_R) f^{(\alpha)} \cdot \omega \|_{E} + \sum_{0 < \beta \leq \alpha} {\alpha \choose \beta} \| \psi_R^{(\beta)} f^{(\alpha - \beta)} \cdot \omega \|_{E} \\
&\leq 2^{|\alpha|} \| \psi \|_{(L^\infty)^n_1} \cdot \max_{|\gamma| \leq n} \| (1 - 1_{[-R, R]^d}) f^{(\gamma)} \cdot \omega \|_{E}
\end{align*}
for each multi-index $\alpha$ with $|\alpha| \leq n$.
Since $f^{(\gamma)} \cdot \omega \in E_{\approxim}$ for all $|\gamma| \leq n$, we have
\[
\lim_{R \to \infty}
\left(
\max_{|\gamma| \leq n} \| (1 - 1_{[-R, R]^d}) f^{(\gamma)} \cdot \omega \|_{E} 
\right)=0.
\]
Thus, there exists $R^1_\varepsilon \geq 1$ such that
\begin{equation}\label{eq:250407-51}
\| f - \psi_{R^1_\varepsilon} f \|_{E^n_\omega}
=\sup_{|\alpha| \le n}
\| \partial^\alpha [f - \psi_R f] \cdot \omega \|_{E} \leq \frac{\varepsilon}{2}.
\end{equation}

Remark that $\psi_{R^1_\varepsilon} f \in (L^1)^{n + d}_\omega$.
In particular, $\psi_{R^1_\varepsilon} f \in C^n(\R^d)$.
Now, using the local boundedness of $\omega$,
 the continuous inclusion $C_{\rm c}(\mathbb{R}^d) \subseteq E$, and
the fact that $E$ is solid, we note that for any $g \in C_{\rm c}(\mathbb{R}^d)$,
\[
\lim_{R \to \infty}\left(
\sup_{t \in \supp \chi} \| (g - T_{t/R} g) \cdot \omega \|_{E} 
\right)=0.
\]
Consequently,
since $\int\chi=1$,
\[
\lim_{R \to \infty}
\| \psi_{R^1_\varepsilon} f - \chi_R * (\psi_{R^1_\varepsilon} f) \|_{E^n_\omega} \leq 
C\lim_{R \to \infty}
\left(
\sup_{t \in \supp \chi} \| \psi_{R^1_\varepsilon} f - T_{t/R}(\psi_{R^1_\varepsilon} f) \|_{E^n_\omega}\right)=0.
\]
Hence, there exists $R^2_\varepsilon \geq 1$ such that
\begin{equation}\label{eq:250407-52}
\| \psi_{R^1_\varepsilon} f - \chi_{R^2_\varepsilon} * (\psi_{R^1_\varepsilon} f) \|_{E^n_\omega} \leq \frac{\varepsilon}{2}.
\end{equation}
Combining (\ref{eq:250407-51}) and (\ref{eq:250407-52}), we obtain
\[
\| f - \chi_{R^2_\varepsilon} * (\psi_{R^1_\varepsilon} f) \|_{E^n_\omega} \leq \varepsilon.
\]
Since $\chi_{R^2_\varepsilon} * (\psi_{R^1_\varepsilon} f) \in \D$ and $\varepsilon > 0$ was arbitrary, the proof is complete.
\end{proof}

\begin{corollary}
	\label{c:DenseInclusionD}
	It holds
	\[ (E_{\approxim})^\infty_\W = \overline{\D\,}^{E^\infty_\W} . \]
\end{corollary}

\begin{proof}
	Directly by Lemma \ref{l:StronglyReduced}. 
\end{proof}

\subsection{The role of the solid TIBF of bounded type}
We investigate the role of the solid TIBF of bounded type \( (E, \|\cdot\|_E) \) 
in defining the space \( E^\infty_{\mathcal{W}} \) associated with a weight function system \( \mathcal{W} \).
When \( \mathcal{W} = \mathbf{1} \), the resulting spaces \( \mathcal{D}_E \) 
may differ drastically depending on the underlying space \( E \); 
see \cite[Theorem~6.5]{D-N-SeqSpRepTMIB}.
However, if consecutive weight functions in \( \mathcal{W} \) dominate each other in a suitable sense 
(in particular, under conditions~\((\condwN)\) and~\((\condN)\)), 
some or all of these spaces may coincide.

We first provide a coarse localization of the space \( E^n_{w_n} \).
\begin{lemma}
    \label{l:InclusionL^1andL^infty}
    Let $\mathcal{W}$ be a weight function system satisfying $(\mathrm{\condwM})$. 
    For any $n \in \mathbb{N}$, there exists an integer $m \geq n$ such that the following continuous inclusions hold:
    \begin{align}
    	(W(L^1, E_{\rm seq}))^m_{w_m} &\subseteq E^n_{w_n} , \label{eq:W(L1, Ed)_to_E} \\
        (L^1)^{m}_{w_m} &\subseteq E^{n}_{w_n}, \label{eq:L1_to_E} \\
        E^m_{w_m} &\subseteq (L^\infty)^{n}_{w_n}, \label{eq:E_to_Linfty} \\
        (E_{\approxim})^m_{w_m} &\subseteq (L^0)^n_{w_n}. \label{eq:Eappr_to_L0}
    \end{align}
\end{lemma}

\begin{proof}
We start with two remarks.
First, the continuity of these inclusions follows from the closed graph theorem since all involved spaces are continuously contained in $L^1_{\mathrm{loc}}(\mathbb{R}^d)$.
Second, it suffices to establish \eqref{eq:W(L1, Ed)_to_E} solely. 
Indeed, 
$F \subseteq W(L^1, F_{\rm seq})$ for any solid TIBF of bounded type $(F, \|\cdot\|)$ by Lemma \ref{l:W(L^1, E_d)ConvSp}.
Also, $\ell^1 \subseteq E_{\rm seq} \subseteq \ell^\infty$ according to \cite[Lemma 3.5(b)]{F-G-BanachSpIntGroupRepAtomicDecomp}. 
This readily implies $((E_{\approxim})_{\rm seq} \subseteq c_0$.
These observation then show that \eqref{eq:L1_to_E}--\eqref{eq:Eappr_to_L0} would hold.

We prove \eqref{eq:W(L1, Ed)_to_E} using the Schwartz parametrix method. Let us begin with the setup. Choose $\chi \in \mathcal{D}$ such that $1_{\overline{B}(0, 1/2)} \leq \chi \leq 1_{\overline{B}(0, 1)}$. 
As in \cite{S-ThDist}, let $F_\ell \in L^1_{\mathrm{loc}}(\mathbb{R}^d)$ be the fundamental solution of $\Delta^\ell$ for a positive integer $\ell$. 
Then
\begin{equation} \label{eq:parametrix_phi}
    \varphi_\ell=\Delta^\ell(\chi F_\ell) - \delta =\Delta^\ell[(1-\chi)F_\ell] \in \mathcal{D}.
\end{equation}
Consequently, for any $f \in (W(L^1, E_{\rm seq}))^m_{w_m}$,
\begin{equation} \label{eq:parametrix_formula}
    f = (\Delta^\ell f) * (\chi F_\ell) - f * \varphi_\ell.
\end{equation}
Choosing $\ell$ sufficiently large, say $\ell > d$, ensures that $\chi F_\ell \in C_{\mathrm{c}}(\mathbb{R}^d)$.

Next, for each $n \in \mathbb{N}$, choose
 $k_n \geq n$ and a constant $C_n > 0$ such that
\begin{equation} \label{eq:weight_dominance}
    w_n(x + y) \leq C_n w_{k_n}(x), \quad \text{for all } x \in \mathbb{R}^d,\ y \in \overline{B}(0, 1).
\end{equation}
As a consequence, we obtain the inequality
\begin{equation} \label{eq:convolution_derivative_bound}
    |\partial^\alpha [g * \phi]| w_n \leq C [|g^{(\alpha)}| w_{k_n}] * |\phi|
\end{equation}
for all $g \in (W(L^1, E_{\rm seq}))^{k_n}_{w_{k_n}}$, $\phi \in C_{\mathrm{c}}(\overline{B}(0, 1))$, and $|\alpha| \leq n$.

Putting $m = k_n + 2\ell$ and by using the solidity of $E$, we obtain \eqref{eq:W(L1, Ed)_to_E} from combining \eqref{eq:ConvAmalgam}, \eqref{eq:parametrix_formula}, and \eqref{eq:convolution_derivative_bound}, 
%
%
%
\end{proof}

Proposition \ref{p:LowerUpperInclusions} serves as a localization
of $E^\infty_\W$, and follows directly from Lemma \ref{l:InclusionL^1andL^infty}:
The former half is due to 
(\ref{eq:W(L1, Ed)_to_E}).
The latter half is (\ref{eq:L1_to_E})--(\ref{eq:Eappr_to_L0}).
In essence, it states that the sequence space $E_{\rm seq}$ already fully determines the space $E^\infty_\W$.

\begin{proposition}
    \label{p:LowerUpperInclusions}
    Let $\W$ be a weight function system satisfying $(\condwM)$. 
    Then
    	\[ E^\infty_\W = (W(L^1, E_{\rm seq}))^{\infty}_\W . \]
    Moreover, we have the continuous inclusions
    	\[ \K^1_\W \subseteq E^\infty_\W \subseteq \B_\W \] 
   and 
   	\[ \K^1_\W \subseteq (E_{\approxim})^\infty_\W \subseteq \dot{\B}_\W . \]
\end{proposition}
%

Proposition \ref{p:IndependenceSemiLoc} below
states that, under certain conditions on a weight function system $\mathcal{W}$, 
regardless of the property of the underlying spaces $E$,
the Fr\'echet space $E^\infty_\mathcal{W}$ remains unchanged whether it is defined using the essential part $(E_{\approxim})^\infty_\mathcal{W}$ or the multiplication module completion $(E_{\semloc})^\infty_\mathcal{W}$. 

\begin{proposition}
	\label{p:IndependenceSemiLoc}
	Let $\W$ be a weight function system satisfying $(\condwM)$ and $(\condwN)$.
	Then $E^\infty_\W = (E_{\approxim})^{\infty}_\W = (E_{\semloc})^{\infty}_\W$ as Fr\'{e}chet spaces.
\end{proposition}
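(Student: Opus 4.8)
The plan is to prove the chain of equalities by establishing the inclusions
\[
(E_{\semloc})^\infty_\W \subseteq (E_{\approxim})^\infty_\W \subseteq E^\infty_\W \subseteq (E_{\semloc})^\infty_\W,
\]
with topological equalities following from the closed graph theorem (all spaces embed continuously in $L^1_{\loc}(\R^d)$, so a continuous bijection between Fréchet spaces is a topological isomorphism). The middle inclusion $(E_{\approxim})^\infty_\W \subseteq E^\infty_\W$ is immediate from Lemma~\ref{l:DotandSemilocProp}(i), and $E^\infty_\W \subseteq (E_{\semloc})^\infty_\W$ is likewise immediate from the same lemma. So the entire content is the inclusion
\[
(E_{\semloc})^\infty_\W \subseteq (E_{\approxim})^\infty_\W,
\]
i.e.\ showing that if $f \in C^\infty(\R^d)$ satisfies $f^{(\alpha)} w_n \in E_{\semloc}$ for all $n,\alpha$, then in fact $f^{(\alpha)} w_n \in E_{\approxim}$ for all $n,\alpha$.

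**Key steps.** First, by Lemma~\ref{l:SmoothWeightFuncSyst} we may assume without loss of generality that the weights $w_n$ are smooth with $|\partial^\alpha w_n| \leq c_{n,\alpha}\, w_{m}$ for suitable $m = m(n,\alpha)$, since replacing $\W$ by an equivalent system changes neither $E^\infty_\W$ nor $(E_{\approxim})^\infty_\W$ nor $(E_{\semloc})^\infty_\W$. Fix $n \in \N$ and a multi-index $\alpha$. By $(\condwN)$ applied (iterated) we can choose $m \geq n$ large enough that $w_n / w_m \in L^0(\R^d)$ and simultaneously $m$ dominates all the weights arising from differentiating $w_n$ up to order $|\alpha|$ via the smoothness estimate, and also absorbs the loss in the $(\condwM)$-type estimate. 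Now consider the function $g := f^{(\alpha)} w_n$. We want $g \in E_{\approxim}$, i.e.\ $1_{\overline B(0,R)}\, g \to g$ in $E$ as $R \to \infty$. Write
\[
g = f^{(\alpha)} w_m \cdot \frac{w_n}{w_m},
\]
where $f^{(\alpha)} w_m \in E_{\semloc}$ by hypothesis. Since $w_n/w_m \in L^0 \subseteq C_0$-like behavior at infinity (it is in $L^\infty$ and vanishes at infinity in the essential-supremum sense), Proposition~\ref{p:E=MEiffSemiLocal} — or rather a direct solidity argument mimicking its proof — shows that multiplying an $E_{\semloc}$-function by an $L^0$-function lands in $E$; more precisely, one splits $\R^d$ into dyadic-type annuli $R_k \le |x| < R_{k+1}$ chosen so that $\|w_n/w_m\|_{L^\infty(\{|x|\ge R_k\})}$ decays summably, and estimates the tail $\|(1 - 1_{\overline B(0,R_k)})g\|_E$ by $\sum_{j \ge k} \|(1_{\overline B(0,R_{j+1})} - 1_{\overline B(0,R_j)}) g\|_E$, each term bounded via solidity by $\|w_n/w_m\|_{L^\infty(\{|x| \ge R_j\})} \cdot \|1_{\overline B(0,R_{j+1})} f^{(\alpha)} w_m\|_E \leq \varepsilon_j \|f^{(\alpha)} w_m\|_{E_{\semloc}}$. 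Choosing the $R_k$ so that $\sum \varepsilon_j < \infty$ gives a convergent tail, hence $g \in E_{\approxim}$.

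**Main obstacle.** The delicate point is that $f^{(\alpha)} w_m \in E_{\semloc}$ gives a \emph{uniform} bound on $\|1_{\overline B(0,R)} f^{(\alpha)} w_m\|_E$ but a priori no decay of the annular pieces themselves; the decay in the estimate above must come entirely from the factor $w_n/w_m \in L^0$, which is exactly why $(\condwN)$ is the right hypothesis rather than merely $(\condwM)$. One must be careful that the annuli $(R_k)$ can be chosen to work simultaneously for the finitely many pairs $(n,\alpha)$ needed to verify membership in a given seminorm, but since $L^0$-membership of $w_n/w_m$ furnishes, for each $\varepsilon$, a single compact set outside of which $w_n/w_m \le \varepsilon$, a standard diagonal/refinement argument handles this. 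The role of $(\condwM)$ here is the background assumption already guaranteeing (via Proposition~\ref{p:EWConvCompact} and Lemma~\ref{l:SmoothWeightFuncSyst}) that all these spaces are well-behaved Fréchet spaces and that the smoothing reduction is legitimate; the substantive work is the $L^0$-multiplier tail estimate, and it is essentially a repackaging of the second half of the proof of Proposition~\ref{p:E=MEiffSemiLocal} with the constant weight replaced by $w_n/w_m$.
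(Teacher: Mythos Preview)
Your argument is correct and follows the same overall route as the paper: establish the trivial inclusions $(E_{\approxim})^\infty_\W \subseteq E^\infty_\W \subseteq (E_{\semloc})^\infty_\W$ via Lemma~\ref{l:DotandSemilocProp}(i), then close the loop by writing $f^{(\alpha)} w_n = (f^{(\alpha)} w_m)\cdot (w_n/w_m)$ with $w_n/w_m$ vanishing at infinity by $(\condwN)$.

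The one place where the paper is a bit slicker: rather than redoing the annular tail estimate by hand, it applies Proposition~\ref{p:E=MEiffSemiLocal} directly to the solid TIBF $E_{\approxim}$ (which is legitimate by Lemma~\ref{l:ApproximSemilocTIBF}). Combined with the identity $(E_{\approxim})_{\semloc} = E_{\semloc}$ from Lemma~\ref{l:DotandSemilocProp}(iii), this says precisely that any element of $E_{\semloc}$ multiplied by a $C_0$-function lands in $E_{\approxim}$---so $f^{(\alpha)}w_m \in E_{\semloc}$ and $w_n/w_m \in C_0$ give $f^{(\alpha)}w_n \in E_{\approxim}$ in one line. (This is also why the paper only needs the weights continuous, via Lemma~\ref{l:SmoothWeightFuncSyst}, rather than smooth.) Your hand-computation of the tail is exactly the proof of that proposition specialised to this situation, so nothing is lost mathematically; you just did not spot that you had already packaged the estimate. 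The worry you flag about handling finitely many $(n,\alpha)$ simultaneously is unnecessary: membership in $(E_{\approxim})^\infty_\W$ is checked one seminorm at a time, so fixing a single $n$ and $\alpha$ suffices.
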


\begin{proof}
	By Lemma \ref{l:DotandSemilocProp}$(i)$, we have the continuous inclusions $(E_{\approxim})^{\infty}_\W \subseteq E^\infty_\W \subseteq (E_{\semloc})^{\infty}_\W$.
	We may assume the elements of $\W$ are continuous by Lemma \ref{l:SmoothWeightFuncSyst}.
Using $(\condwN)$,
we choose for each $n \in \N$ some $m_n \geq n$ such that $w_n / w_{m_n} \in C_0(\mathbb{R}^d)$. 
	Take any $f \in (E_{\semloc})^\infty_\W$.
	Since $(E_{\approxim})_{\semloc} = E_{\semloc}$ by Lemma \ref{l:DotandSemilocProp}$(iii)$, it follows from Proposition \ref{p:E=MEiffSemiLocal} that for any $n \in \N$ and $\alpha \in \N^d$
		\[ f^{(\alpha)} w_n = [f^{(\alpha)} w_{m_n}] \cdot \frac{w_n}{w_{m_n}} \in E_{\approxim} . \]
	This shows that $(E_{\semloc})^\infty_\W \subseteq (E_{\approxim})^\infty_\W$, so that the three spaces coincide as sets.
Their equality as Fréchet spaces follows from the open mapping theorem.
\end{proof}

The next lemma connects spaces of Gelfand-Shilov-type
generated by two different TIBFs.
\begin{lemma}
    \label{l:InclusionDifferentTIBF}
    Let $\W$ be a weight function system satisfying $(\condwM)$ and $(\condN)$.
    Let $(E, \|\cdot\|_E)$ and $(F, \|\cdot\|_F)$ be two solid TIBFs of bounded type. Then for any $n \in \N$, there exists an integer $m \geq n$ such that $F^m_{w_m} \subseteq E^{n}_{w_n}$ continuously.
\end{lemma}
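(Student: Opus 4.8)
The plan is to factor the desired inclusion through the two \emph{extremal} weighted spaces $(L^1)^\bullet_{w_\bullet}$ and $(L^\infty)^\bullet_{w_\bullet}$, which are already tied to $E$ and to $F$ by Lemma~\ref{l:InclusionL^1andL^infty}, and to bridge the gap between $L^\infty$ and $L^1$ by means of condition $(\condN)$. In this way the parametrix machinery never has to be reopened: it has all been absorbed into Lemma~\ref{l:InclusionL^1andL^infty}.

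Concretely, I would proceed as follows. First, apply Lemma~\ref{l:InclusionL^1andL^infty} to the space $E$ and the index $n$ to obtain an integer $m_1 \geq n$ with $(L^1)^{m_1}_{w_{m_1}} \subseteq E^n_{w_n}$ continuously. Next, invoke $(\condN)$ at the index $m_1$ to find $m_2 \geq m_1$ with $w_{m_1}/w_{m_2} \in L^1(\R^d)$. Then apply Lemma~\ref{l:InclusionL^1andL^infty} once more, this time to the space $F$ and the index $m_2$, producing $m \geq m_2$ such that $F^m_{w_m} \subseteq (L^\infty)^{m_2}_{w_{m_2}}$ continuously.

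The middle link is the elementary inclusion $(L^\infty)^{m_2}_{w_{m_2}} \subseteq (L^1)^{m_1}_{w_{m_1}}$: if $g \in (L^\infty)^{m_2}_{w_{m_2}}$, then for each $|\alpha| \leq m_1 \leq m_2$ one writes $g^{(\alpha)} w_{m_1} = (g^{(\alpha)} w_{m_2})\,(w_{m_1}/w_{m_2})$, whence $\| g^{(\alpha)} w_{m_1}\|_{L^1} \leq \| w_{m_1}/w_{m_2}\|_{L^1}\,\| g^{(\alpha)} w_{m_2}\|_{L^\infty}$. Taking the maximum over $|\alpha| \leq m_1$ shows this inclusion is continuous with explicit norm bound. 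Chaining the three inclusions gives $F^m_{w_m} \subseteq (L^\infty)^{m_2}_{w_{m_2}} \subseteq (L^1)^{m_1}_{w_{m_1}} \subseteq E^n_{w_n}$, which is the claim; continuity at each step is in any case automatic from the closed graph theorem, since every space involved embeds continuously into $L^1_{\loc}(\R^d)$.

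There is no real obstacle once Lemma~\ref{l:InclusionL^1andL^infty} is available: the argument is bookkeeping of indices plus one genuine idea, namely that $(\condN)$ is exactly the hypothesis permitting the passage from an $L^\infty$-bound on $g^{(\alpha)} w_{m_2}$ to an $L^1$-bound on $g^{(\alpha)} w_{m_1}$, i.e.\ from "upper-type" control at a higher weight to "lower-type" control at a lower weight. If anything merits a second look, it is only making sure the index $m_2$ furnished by $(\condN)$ is then fed into Lemma~\ref{l:InclusionL^1andL^infty} for $F$ (not for $E$) at that same index $m_2$, and that $m \geq m_2 \geq m_1 \geq n$ so that all the weighted derivative norms compared are indeed defined.
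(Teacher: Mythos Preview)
Your proof is correct and follows essentially the same approach as the paper: reduce to the extremal cases $E=L^1$ and $F=L^\infty$ via Lemma~\ref{l:InclusionL^1andL^infty}, then use condition $(\condN)$ to obtain the inclusion $(L^\infty)^{m_2}_{w_{m_2}} \subseteq (L^1)^{m_1}_{w_{m_1}}$ through the factorization $g^{(\alpha)} w_{m_1} = (g^{(\alpha)} w_{m_2})(w_{m_1}/w_{m_2})$. The paper states the reduction a bit more tersely (``we may assume $E=L^1$ and $F=L^\infty$''), whereas you spell out the three-step chain of indices explicitly, but the substance is identical.
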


\begin{proof}
    By \eqref{eq:L1_to_E} and \eqref{eq:E_to_Linfty},
we may assume that $E=L^1$ and that $F=L^\infty$, that is, it suffices to show that $(L^\infty)^m_{w_m} \subseteq (L^1)^{n}_{w_n}$ for some $m \geq n$. For each $n \in \N$,
using $(\condN)$, we choose $m=m_n \geq n$ such that $w_n / w_{m_n} \in L^1$. Then, for $f \in (L^\infty)^{m_n}_{w_{m_n}}$, we have
    \[
        \int_{\mathbb{R}^d} |f^{(\alpha)}(x)| w_n(x) dx 
\leq \|w_n / w_{m_n}\|_{L^1} \|f\|_{(L^\infty)^{n}_{w_{m_n}}}
\leq \|w_n / w_{m_n}\|_{L^1} \|f\|_{(L^\infty)^{m_n}_{w_{m_n}}}
    \]
for all $\alpha \in {\mathbb N}^d$ with $|\alpha| \le n$.
This proves
that $F^m_{w_m} \subseteq E^{n}_{w_n}$ continuously.
\end{proof}

For any weight function system $\W$ satisfying $(\condwM)$, the property $(\condN)$ is equivalent to the definition of $E^\infty_\W$ being independent of the solid TIBF of bounded type $E$.

\begin{theorem}
    \label{t:IndependenceTIBF}
    Let $\W$ be a weight function system satisfying $(\condwM)$. The following statements are equivalent{\rm:}
    \begin{itemize}
        \item[$(i)$] $\W$ satisfies $(\condN)$.
        \item[$(ii)$] $E^\infty_\W = F^\infty_\W$ as sets for any two solid TIBFs of bounded type $E, F$.
        \item[$(iii)$] $E^\infty_\W = F^\infty_\W$ as Fr\'echet spaces for any two solid TIBFs of bounded type $E, F$.
        \item[$(iv)$] $\mathcal{K}^1_\W = \B_\W$ as sets.
        \item[$(v)$] $\mathcal{K}^1_\W = \B_\W$ as Fr\'echet spaces.
    \end{itemize}
\end{theorem}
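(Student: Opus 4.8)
\textbf{Proof strategy for Theorem~\ref{t:IndependenceTIBF}.}
The plan is to establish the cycle of implications
$(i) \Rightarrow (iii) \Rightarrow (ii) \Rightarrow (iv)$, then $(iv) \Rightarrow (i)$, with $(iii) \Leftrightarrow (v)$ being an immediate specialization and $(v) \Rightarrow (iv)$ trivial. The two substantive directions are $(i) \Rightarrow (iii)$ and $(iv) \Rightarrow (i)$; the rest are either formal or already packaged in earlier lemmas.

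For $(i) \Rightarrow (iii)$: assume $(\condN)$. Given two solid TIBFs of bounded type $E, F$, Lemma~\ref{l:InclusionDifferentTIBF} supplies, for each $n$, an integer $m \geq n$ with $F^m_{w_m} \subseteq E^n_{w_n}$ continuously, and symmetrically an integer $m' \geq n$ with $E^{m'}_{w_{m'}} \subseteq F^n_{w_n}$ continuously. Taking the projective limits over $n$ interleaves these inclusions and yields $E^\infty_\W = F^\infty_\W$ with identical Fréchet topologies (the interleaved continuous inclusions force the two families of seminorms to be equivalent). This gives $(iii)$ at once, and $(iii) \Rightarrow (ii)$ is trivial. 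Specializing to $E = L^1$, $F = L^\infty$ gives $(iii) \Rightarrow (v)$ and $(ii) \Rightarrow (iv)$, while $(v) \Rightarrow (iv)$ and $(iii) \Rightarrow (v)$ are immediate (note $L^1$ and $L^\infty$ are solid TIBFs of bounded type by Example~\ref{ex:BasicTIBF}). So the only real work is the converse $(iv) \Rightarrow (i)$, i.e.\ proving necessity of $(\condN)$.

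For $(iv) \Rightarrow (i)$: suppose $(\condN)$ fails, so there is some $n_0$ such that $w_{n_0}/w_m \notin L^1(\R^d)$ for every $m \geq n_0$. We must exhibit an element of $\mathcal{K}^\infty_\W = \B_\W$ that does not lie in $\mathcal{K}^1_\W$. Using Lemma~\ref{l:SmoothWeightFuncSyst} we may assume the $w_n$ are smooth with controlled logarithmic derivatives, which lets us build smooth test objects with prescribed decay. The idea is to construct a smooth function $f$ that behaves like $w_m^{-1}$ at the relevant scales (so that $f^{(\alpha)} w_n \in L^\infty$ for all $n, \alpha$, placing $f$ in $\B_\W$), but such that $\int f w_{n_0} = \infty$ because $w_{n_0}/w_m \notin L^1$ for every admissible $m$. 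Concretely, one picks a sequence of disjoint unit balls $\overline{B}(x_k, 1)$ marching to infinity along which $w_{n_0}/w_{m}$ has non-summable mass, places scaled bumps $c_k \varphi(\cdot - x_k)$ with $c_k$ comparable to $\inf_{\overline{B}(x_k,1)} w_{m_k}^{-1}$ for a suitable increasing sequence $m_k$, and uses $(\condwM)$ to control all the derivatives $f^{(\alpha)} w_n$ uniformly (for fixed $n$, once $k$ is large enough $m_k \geq$ the moderateness index associated to $n$, so $|f^{(\alpha)}(x)| w_n(x) \lesssim c_k w_n(x) \lesssim 1$ on the $k$-th bump). Meanwhile $\int |f| w_{n_0} \gtrsim \sum_k c_k \int_{\overline{B}(x_k,1)} w_{n_0} \gtrsim \sum_k \int_{\overline{B}(x_k,1)} w_{n_0}/w_{m_k}$, which diverges by the choice of the $x_k$ if the $m_k$ are chosen to grow slowly enough relative to the failure of integrability. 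This forces $f \notin (L^1)^0_{w_{n_0}} \supseteq \mathcal{K}^1_\W$, contradicting $(iv)$.

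\textbf{Main obstacle.} The delicate point is the construction in $(iv) \Rightarrow (i)$: one needs to choose the centers $x_k$ and the indices $m_k$ simultaneously so that (a) the $L^1$-mass of $w_{n_0}/w_{m_k}$ over $\overline{B}(x_k,1)$ stays bounded below by a non-summable sequence, which requires extracting a "bad direction/region" at infinity from the mere failure of $w_{n_0}/w_m \in L^1$ for each individual $m$, and (b) the amplitudes $c_k$ simultaneously kill all higher derivatives against every $w_n$. Part (b) is where $(\condwM)$ (in its smoothed form from Lemma~\ref{l:SmoothWeightFuncSyst}) is essential, and part (a) is a diagonalization over the countably many failures $w_{n_0}/w_m \notin L^1$; getting a single $f$ rather than a sequence is the crux. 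This type of counterexample is standard in the Gelfand--Shilov literature, so I expect to invoke or lightly adapt the construction underlying \cite[Lemma 3.1]{D-N-V-NuclGSSpKernThm} rather than reinvent it.
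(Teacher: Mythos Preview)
Your overall logical scaffolding for the easy implications matches the paper, and your use of Lemma~\ref{l:InclusionDifferentTIBF} for $(i)\Rightarrow(iii)$ is exactly what the paper does (it routes through Proposition~\ref{p:LowerUpperInclusions} as well, but that is equivalent). The substantive difference lies in $(iv)\Rightarrow(i)$.

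You argue by contrapositive: assuming $(\condN)$ fails at some index $n_0$, you build an explicit $f=\sum_k c_k\varphi(\cdot-x_k)\in\B_\W\setminus\K^1_\W$ by a bump construction together with a diagonalization over the countably many failures $w_{n_0}/w_m\notin L^1$. This is correct and workable; your identification of the diagonal selection of $(x_k,m_k)$ as the crux is accurate, and it goes through by choosing, for each successive $m$, finitely many disjoint unit balls far out so that the partial sum of $\int_{B(x_k,1)}w_{n_0}/w_{m}$ exceeds $1$, then incrementing $m$. One small remark: for part~(a) you do not actually need the smoothed weights from Lemma~\ref{l:SmoothWeightFuncSyst}; since your bump $\varphi$ is fixed in $\D$ and $c_k=1/\sup_{\overline B(x_k,1)}w_{m_k}$, plain monotonicity $w_n\le w_{m_k}$ for $m_k\ge n$ already gives $c_k\,w_n\le 1$ on the $k$-th ball. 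Condition $(\condwM)$ is really only used in part~(b), to pass from $c_k$ to a pointwise lower bound $c_k\gtrsim w_{m_k'}(x)^{-1}$ on the ball.

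The paper takes a different, non-constructive route: it argues \emph{directly}. From the set equality $\K^\infty_\W=\K^1_\W$ it first deduces $\K^0_\W=\K^1_\W$, then applies the Baire category theorem to the closed sets $\{f\in\K^0_\W:\|f\|_{(L^1)_{w_n}}\le l\}$ to extract a uniform inequality $\|f\|_{(L^1)_{w_n}}\le C\|f\|_{(L^\infty)^m_{w_m}}$ valid on all of $(L^0)^m_{w_m}$ (using the density Lemma~\ref{l:StronglyReduced}). It then plugs in the explicit test family $f_{a,b}(x)=\rho(bx)\int a^d\eta(ay)\,w_m(x-y)^{-1}\,dy$ and lets $a\to\infty$, $b\to0^+$ with Fatou's lemma to conclude $w_n/w_m\in L^1$. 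The trade-off: the paper's argument sidesteps the diagonalization entirely and gives a clean quantitative bound, at the cost of being non-constructive; your approach produces an explicit witness to the failure of the inclusion, but you must manage the interplay between the growing $m_k$ and the placement of the balls by hand.
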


\begin{proof}
    The implications 
    \[
(iii) \Rightarrow (ii) \Rightarrow (iv) \quad \text{and} \quad (iii) \Rightarrow (v) \Rightarrow (iv)
    \]
    are trivial.
Meanwhile
the implications 
$(i) \Rightarrow (v) \Rightarrow (iii)$
follow from Proposition~\ref{p:LowerUpperInclusions} and Lemma~\ref{l:InclusionDifferentTIBF}.
It thus suffices to show that $(iv) \Rightarrow (i)$.

Assume that $\mathcal{K}^1_\W = \B_\W$
as sets.
We define the Fr\'echet spaces
	\[ \lambda^p_\W = \left\{ (c_{k})_{k \in \Z^d} \in \C^{\Z^d} \mid (c_k w_n(k))_{k \in \Z^d} \in \ell^p \text{ for all } n \in \N \right\} , \qquad p \in \{1, \infty\} . \]
    Since $\ell^1$ is embedded into $\ell^\infty$,
$\lambda^1_\W \subseteq \lambda^\infty_\W$.
Take any $\chi \in \D([-1/2, 1/2]^d)$ such that $\int \chi^2 = 1$.
Using $(\condwM)$, one easily verifies the linear mappings
	\[ S : \lambda^\infty_\W \to \K^\infty_\W , \qquad (c_k)_{k \in \Z} \mapsto \sum_{k \in \Z^d} c_k T_k \chi \]
and
	\[ T : \K^1_\W \to \lambda^1_\W , \qquad 
	f \mapsto \left(\int_{\R^d} f T_k \chi \right)_{k \in \Z^d} \]
are well
defined. As $\K^1_\W = \K^\infty_\W$, we then see that $T \circ S$ gives the identity $\lambda^\infty_\W = \lambda^1_\W$.
Then $\W$ must satisfy $(\condN)$ by \cite[Proposition 2.1]{D-N-V-NuclGSSpKernThm}.
\end{proof}

\begin{remark}
\
\begin{enumerate}
\item[$(i)$]
The statements $(i)$--$(v)$ in Theorem \ref{t:IndependenceTIBF} are also equivalent to: \emph{$E^\infty_\W$ is nuclear for some/any solid TIBF of bounded type $E$}.
Indeed, if $\W$ satisfies $(\condN)$, then this was already shown for $\K^\infty_\W = E^\infty_\W$ in \cite[p.~181]{G-S-GenFunc3} under some extra conditions, but can also be proven more generally as in \cite[Theorem 5.1]{D-N-V-NuclGSSpKernThm}.
On the other hand, if some $E^\infty_\W$ is nuclear, one verifies similarly as in \cite[Corollary 4.8]{D-N-V-NuclGSSpKernThm} that the embedding $\lambda^1_\W \subseteq \lambda^\infty_\W$ factors through $E^\infty_\W$.
By a result due to Petzsche \cite{P-Nukl}, see \cite[Lemma 5.5]{D-N-V-NuclGSSpKernThm}, it would follow that $\lambda^1_\W$ is nuclear.
Then $\W$ would satisfy $(\condN)$ by \cite[Proposition 2.1]{D-N-V-NuclGSSpKernThm}.
\item[$(ii)$]
Let $\W={\bf 1}$ and $1<q < p<\infty$.
Let $\Lambda$ be a discrete subset of $\R^d$ such that $Z=\bigcup\limits_{\lambda \in \Lambda}(\lambda+[0,1]^n)$ is the disjoint union of infinitely many closed cubes of volume $1$.
If the elements of $\Lambda$ are far enough apart, then
$\chi_Z \in {\mathcal M}^p_q$.
Choose a function $\psi \in \D([0,1]^n) \setminus \{0\}$.
Then the example of
$\sum\limits_{\lambda \in \Lambda} T_\lambda \psi \in 
\D_{\mathcal{M}^p_q}\setminus \D_{L^p}$
shows it can happen that
$\D_{\mathcal{M}^p_q} \ne \D_{L^p}$.
\end{enumerate}
\end{remark}
%

\subsection{Convolution}

We investigate the smoothing effect of the convolution with
$C_{\rm c}(K)$ and $\D(K)$.
We prove a general estimate.
\begin{lemma}
	\label{l:ConvGenOmega}
Let $K \Subset \mathbb{R}^d$ and $C > 0$.
	Let $\omega, \nu : \mathbb{R}^d \to (0, \infty)$ locally integrable functions such that $\omega(x + y) \leq C \nu(x)$ for all $x \in \mathbb{R}^d$ and $y \in K$.
	Then for any $f \in E_\nu$ and $\phi \in C_{\rm c}(K)$ we have $f * \phi \in E_\omega$ and 
		\begin{equation}
			\label{eq:ConvGenOmega} 
			\|[f * \phi] \omega\|_E \leq C  \|f \nu\|_E \|\phi\|_{L^1} .  
		\end{equation}
\end{lemma}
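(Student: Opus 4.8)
The plan is to reduce the weighted convolution estimate to the unweighted $L^1$-module estimate of Lemma~\ref{l:ExtensionConvolution} by means of a pointwise domination. The key observation is that condition \eqref{eq:250421-11}, namely $\omega(x+y) \le C\nu(x)$ for all $x \in \mathbb{R}^d$ and $y \in K$, lets us move the weight $\omega$ \emph{inside} the convolution integral: writing $(f*\phi)(x) = \int_K f(x-y)\phi(y)\,dy$ and noting that the displacement $y$ ranges over $K$, for the translated variable $x-y$ we have $\nu(x-y) \ge C^{-1}\omega((x-y)+y) = C^{-1}\omega(x)$, hence $\omega(x) \le C\,\nu(x-y)$ whenever $y \in K$. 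Therefore, for almost every $x$,
\[
  \bigl|(f*\phi)(x)\bigr|\,\omega(x)
  \le \int_K |f(x-y)|\,|\phi(y)|\,\omega(x)\,dy
  \le C \int_K \bigl(|f|\,\nu\bigr)(x-y)\,|\phi(y)|\,dy
  = C\,\bigl[(|f|\nu) * |\phi|\bigr](x).
\]

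Once this pointwise bound is established, the proof proceeds in two short steps. First, since $f \in E_\nu$ means $f\nu \in E$ and $E$ is solid, the function $|f|\nu = |f\nu|$ lies in $E$ with $\||f|\nu\|_E = \|f\nu\|_E$; since $|\phi| \in C_{\rm c}(K) \subseteq L^1(\mathbb{R}^d)$, Lemma~\ref{l:ExtensionConvolution} gives $(|f|\nu)*|\phi| \in E$ with
\[
  \bigl\|(|f|\nu)*|\phi|\bigr\|_E \le C_0\,\||\phi|\|_{L^1}\,\||f|\nu\|_E = C_0\,\|\phi\|_{L^1}\,\|f\nu\|_E.
\]
Second, applying solidity of $E$ once more to the pointwise inequality $|(f*\phi)\omega| \le C\,(|f|\nu)*|\phi|$ shows that $(f*\phi)\omega \in E$ — that is, $f*\phi \in E_\omega$ — with $\|(f*\phi)\omega\|_E \le C\,\bigl\|(|f|\nu)*|\phi|\bigr\|_E \le C C_0\,\|f\nu\|_E\,\|\phi\|_{L^1}$, which is exactly \eqref{eq:ConvGenOmega}.

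There is no real obstacle here; the only points requiring a little care are the measurability of $(f*\phi)\omega$ (so that solidity of $E$ applies) and the consistency of the classical convolution of $f \in L^1_{\rm loc}$ with $\phi \in C_{\rm c}(K)$ with the extended $E$-valued convolution of Lemma~\ref{l:ExtensionConvolution} — but $|f|\nu \in E \subseteq L^1_{\rm loc}$ and $|\phi| \in C_{\rm c}(K)$, so the two agree by \eqref{eq:conv_equality}, and $\omega$ being locally integrable with $f*\phi$ continuous makes the product locally integrable. Thus the argument is essentially the three-line chain above.
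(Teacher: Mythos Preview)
Your proposal is correct and follows essentially the same approach as the paper: both establish the pointwise inequality $|f*\phi|\,\omega \le C\,(|f|\nu)*|\phi|$ from condition~\eqref{eq:250421-11}, then invoke solidity of $E$ together with the $L^1$-module estimate of Lemma~\ref{l:ExtensionConvolution}. Your write-up is in fact more careful than the paper's terse version, spelling out the substitution $x \mapsto x-y$ in the weight inequality and the two applications of solidity.
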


\begin{proof}
We have $|f * \phi| \omega \leq C |f \nu| * |\phi|$
	for any $f \in E_\nu$ and $\phi \in C_{\rm c}(K)$.
	Then $[f * \phi] \omega \in E$ by the solidity of $E$, and \eqref{eq:ConvGenOmega} follows from \eqref{eq:ModuleBounds}.
\end{proof}

We investigate the continuity of convolution operators in $E_\W$.
\begin{proposition}
    \label{p:EWConvCompact}
    Let $\W$ be a weight function system satisfying $(\condwM)$.
    For any compact subset $K \Subset {\mathbb R}^d$, the convolution mappings
    \[ * : E_\W \times C_{\rm c}(K) \to E_\W \quad \text{and} \quad * : E_\W \times \D(K) \to E^\infty_\W \]
    are well
defined and continuous.
\end{proposition}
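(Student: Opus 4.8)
The plan is to reduce everything to the weighted $L^1$-convolution estimate of Lemma~\ref{l:ConvGenOmega}, combined with the differentiation-under-convolution identity $\partial^\alpha(f*\phi) = f*\partial^\alpha\phi$ for $\phi\in\D(K)$. First I would fix $n\in\N$ and, using $(\condwM)$, choose $m\ge n$ and $C>0$ with $w_n(x+y)\le C w_m(x)$ for all $x\in\R^d$ and $|y|\le 1$; after enlarging $m$ if necessary (iterating $(\condwM)$, as remarked after the statement of the conditions) we may assume the same inequality holds for all $y$ in a fixed compact neighbourhood of $K$. Then Lemma~\ref{l:ConvGenOmega} applied with $\omega=w_n$, $\nu=w_m$ gives, for $f\in E_{w_m}$ and $\phi\in C_{\rm c}(K)$,
\[
	\|[f*\phi] w_n\|_E \le C C_0 \|f w_m\|_E \|\phi\|_{L^1},
\]
which already shows $f*\phi\in E_{w_n}$ with the displayed continuity estimate. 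Since $n$ was arbitrary and $E_\W = \bigcap_n E_{w_n}$ carries the projective-limit topology, this establishes that $*:E_\W\times C_{\rm c}(K)\to E_\W$ is well defined; joint continuity of this bilinear map follows because each seminorm of the target is controlled by a product of a seminorm of $E_\W$ (in the $f$ variable) and the $L^1$-norm (a continuous seminorm on $C_{\rm c}(K)$, which is a Banach space), and a separately continuous bilinear map on a product of a Fréchet space with a Banach space is jointly continuous.

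For the second mapping, take $\phi\in\D(K)$ and $f\in E_\W$. For every multi-index $\alpha$ we have $\partial^\alpha(f*\phi)=f*(\partial^\alpha\phi)$ with $\partial^\alpha\phi\in C_{\rm c}(K)$, so by the first part $\partial^\alpha(f*\phi)\in E_{w_n}$ for every $n$; thus $f*\phi\in C^\infty(\R^d)$ with all derivatives lying in every $E_{w_n}$, i.e.\ $f*\phi\in E^\infty_\W$. Quantitatively, with $m\ge n$ chosen as above,
\[
	\|\partial^\alpha[f*\phi] w_n\|_E \le C C_0 \|f w_m\|_E \|\partial^\alpha\phi\|_{L^1},
\]
so the $E^n_{w_n}$-seminorm of $f*\phi$ is bounded by $C C_0\|f w_m\|_E$ times a seminorm of $\phi$ on $\D(K)$. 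As before, each target seminorm is dominated by (a seminorm of $f$ in $E_\W$) $\times$ (a continuous seminorm of $\phi$ on the Fréchet space $\D(K)$), giving separate continuity; joint continuity then follows from the Fréchet/Banach argument — or, staying entirely within $\D(K)$, from separate continuity of a bilinear map on the product of two Fréchet spaces.

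The only genuinely delicate point — and the one I expect to be the main obstacle to state cleanly rather than technically hard — is bookkeeping the index shift: the estimate for the $w_n$-seminorm of the output requires the $w_m$-seminorm of the input with $m$ depending on $n$ (and on $K$, through how many times $(\condwM)$ must be iterated to absorb $K$ into $\overline{B}(0,1)$'s role). This is exactly why the target of the convolution is the full projective limit $E_\W$ (resp.\ $E^\infty_\W$) and not a single Banach step, and why continuity must be phrased at the level of the Fréchet topology. Apart from this, everything is a direct consequence of Lemmas~\ref{l:ExtensionConvolution}, \ref{l:SandwhichSmoothFunc}, and~\ref{l:ConvGenOmega}; differentiation under the convolution integral is justified by the standard dominated-convergence argument since $\phi$ is compactly supported and smooth and $f\in L^1_{\loc}(\R^d)$.
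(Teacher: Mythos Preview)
Your proposal is correct and follows essentially the same route as the paper: both reduce to Lemma~\ref{l:ConvGenOmega} via $(\condwM)$ for the first mapping, and then deduce the second by passing derivatives onto $\phi$. The paper's proof is two sentences to this effect; you have simply written out the index bookkeeping and the continuity argument in full, and in fact your explicit product-of-seminorms estimates already yield joint continuity directly, so the separate-to-joint continuity detour is unnecessary.
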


\begin{proof}
	Since
$\W$ satisfies $(\condwM)$, the continuity of 
the first convolution mapping
$* : E_\W \times C_{\rm c}(K) \to E_\W$ follows directly from Lemma \ref{l:ConvGenOmega}.
	The continuity of
the second convolution mapping
 $* : E_\W \times \D(K) \to E^\infty_\W$ is now an immediate consequence
 of the former.
\end{proof}

Here and below, we mean by the well-definedness of
the convolution $* : E_\W \times L^1_\V \to E_\W$ that
the restriction of the continuous map $* : E \times L^1 \to E$
to $E_\W \times L^1_\V$ has its image in $E_\W$.
Similar conventions will be used for other restrictions.

Next, we show that the notion of $\V$-moderateness is natural in this context.
\begin{theorem}
    \label{t:EWConvModerate}
    Let $\V, \W$ be weight function systems satisfying $(\condwM)$. The following statements are equivalent{\rm:}
    \begin{itemize}
        \item[$(i)$] $\W$ is $\V$-moderate{\rm;}
        \item[$(ii)$] the convolution $* : E_\W \times L^1_\V \to E_\W$ is well-defined{\rm;}
        \item[$(iii)$] the convolution $* : E_\W \times \K^1_\V \to E^\infty_\W$ 
is well-defined.
    \end{itemize}
    If these statements hold, the mappings in $(ii)$ and $(iii)$ are continuous.
\end{theorem}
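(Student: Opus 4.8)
\textbf{Proof plan for Theorem~\ref{t:EWConvModerate}.}

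The plan is to establish the cycle of implications $(i) \Rightarrow (iii) \Rightarrow (ii) \Rightarrow (i)$, with the continuity assertions extracted along the way (or afterward via the closed graph theorem, since all the spaces involved are Fr\'echet spaces continuously embedded in $L^1_{\loc}(\mathbb{R}^d)$, so separately continuous bilinear maps between them are automatically continuous). The implication $(iii) \Rightarrow (ii)$ will be the easy direction: if $f \in E_\W$ and $\psi \in L^1_\V$, one approximates $\psi$ by a sequence in $\K^1_\V$ (for instance by mollifying and truncating, using that $\K^1_\V$ is dense in $L^1_\V$ when $\V$ satisfies $(\condwM)$ — a mollification-truncation argument essentially identical to Lemma~\ref{l:StronglyReduced}), and uses the continuity of $* : E \times L^1 \to E$ from Lemma~\ref{l:ExtensionConvolution} together with a weighted bound to pass to the limit; alternatively one argues directly that $|(f*\psi)w_n| \le C\,(|f w_m| * |\psi v_m|)$ pointwise.

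For $(i) \Rightarrow (iii)$, the heart is the pointwise estimate. Suppose $\W$ is $\V$-moderate. Fix $n \in \N$; choose $m \ge n$ and $C>0$ with $w_n(x+y) \le C\, w_m(x)\, v_m(y)$ for all $x,y$. Then for $f \in E_\W$, $g \in \K^1_\V$, and any multi-index $\alpha$, writing $\partial^\alpha(f*g) = f * \partial^\alpha g$ one gets
\[
    |\partial^\alpha(f * g)(x)|\, w_n(x)
    \le C \int_{\mathbb{R}^d} |f(x-y)|\, w_m(x-y)\, |\partial^\alpha g(y)|\, v_m(y)\, dy
    = C\,\bigl( (|f|\,w_m) * (|\partial^\alpha g|\, v_m) \bigr)(x).
\]
Since $|f|\, w_m \in E$ and $|\partial^\alpha g|\, v_m \in L^1(\mathbb{R}^d)$ (because $g \in \K^1_\V = \bigcap_k (L^1)^k_{v_k}$), the solidity of $E$ and Lemma~\ref{l:ExtensionConvolution} give $\partial^\alpha(f*g)\, w_n \in E$ with
\[
    \|\partial^\alpha(f*g)\, w_n\|_E \le C\, C_0\, \|f\|_{E_{w_m}}\, \|\partial^\alpha g\|_{(L^1)_{v_m}}.
\]
As $n$ and $\alpha$ are arbitrary, $f * g \in E^\infty_\W$, and the displayed bound (together with $E^\infty_\W \hookrightarrow E_\W$) is exactly the separate continuity needed.

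The main obstacle is the converse direction $(ii) \Rightarrow (i)$: we must \emph{extract} $\V$-moderateness from a purely operator-theoretic hypothesis. This is where the real work lies, and I expect the argument to mirror the necessity proof in Theorem~\ref{t:IndependenceTIBF}. The idea: assume $* : E_\W \times L^1_\V \to E_\W$ is well-defined, hence (by the closed graph / Baire argument, using that $E_\W$ is Fr\'echet) continuous; so for each $n$ there are $m \ge n$, $C>0$, and finitely many seminorm indices such that $\|(f*g) w_n\|_E \le C \|f\|_{E_{w_m}} \|g\|_{(L^1)_{v_m}}$. Now one probes this inequality with well-chosen test functions: take $f$ a bump concentrated near a point $x_0$ (rescaled so that $f w_m$ has controlled $E$-norm, using that $1_K \in E$ and solidity) and $g$ a bump concentrated near a point $y_0$; then $f * g$ is a bump near $x_0 + y_0$ of definite size, and evaluating the $E$-norms — crucially using solidity to compare with indicator functions $1_{\overline B(x_0+y_0,\varepsilon)}$ whose $E$-norms are translation-controlled via (A.2) — forces $w_n(x_0+y_0) \lesssim w_m(x_0)\, w_m(y_0)$ up to a constant independent of $x_0, y_0$. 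Passing from such pointwise-at-centers estimates to the clean inequality $w_n(x+y) \le C w_m(x) v_m(y)$ may require first replacing $\W$ by an equivalent smooth system via Lemma~\ref{l:SmoothWeightFuncSyst} (so that local oscillation of the weights is controlled by $(\condwM)$) and absorbing the resulting constants into a larger index; one then sets $v_m := w_m$ on the $\V$ side, or more precisely observes that since $\V$ also satisfies $(\condwM)$ the weight $w_m$ can be dominated by $v_{m'}$ after enlarging the index, yielding $\V$-moderateness as stated. The delicate bookkeeping is ensuring all constants are genuinely uniform in the base points, which the solidity-plus-translation-boundedness combination is designed to deliver.
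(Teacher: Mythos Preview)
Your cycle $(i)\Rightarrow(iii)\Rightarrow(ii)\Rightarrow(i)$ reverses two arrows compared to the paper's $(i)\Rightarrow(ii)\Rightarrow(iii)\Rightarrow(i)$, and both reversals cause trouble.

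\textbf{The step $(iii)\Rightarrow(ii)$ does not close.} Continuity of $*: E_\W \times \K^1_\V \to E^\infty_\W$ gives, for each $n$, only an estimate of the form
\[
\|(f*\psi)w_n\|_E \le C\,\|f\|_{E_{w_m}}\max_{|\alpha|\le m}\|\psi^{(\alpha)} v_m\|_{L^1},
\]
involving \emph{derivatives} of $\psi$. When you approximate a general $\psi \in L^1_\V$ by mollified $\psi_k \in \K^1_\V$, the seminorms $\|\psi_k^{(\alpha)} v_m\|_{L^1}$ for $|\alpha|>0$ blow up, so there is no uniform bound with which to pass to the limit in $E_{w_n}$. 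Your ``alternative'' pointwise inequality $|(f*\psi)w_n|\le C\,(|fw_m|*|\psi v_m|)$ is precisely $\V$-moderateness, so invoking it here is circular. The paper sidesteps this by proving the genuinely trivial direction $(ii)\Rightarrow(iii)$ (via $\partial^\alpha(f*\psi)=f*\psi^{(\alpha)}$) and running the probing argument on $(iii)$ instead.

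\textbf{The probing in $(ii)\Rightarrow(i)$ is misnormalized.} You correctly reach $\|(f*g)w_n\|_E \le C\,\|f\|_{E_{w_m}}\|g\|_{(L^1)_{v_m}}$, but then write the conclusion as $w_n(x_0+y_0)\lesssim w_m(x_0)\,w_m(y_0)$ and try to pass to $v_m$ afterward by claiming ``$w_m$ can be dominated by $v_{m'}$''. That is unjustified: nothing relates $\W$ and $\V$ a priori. The $\V$-weight must come from the normalization of $g$: take $g = v_m(y_0)^{-1}\, T_{y_0}\phi$ for a fixed bump $\phi$, so that $(\condwM)$ for $\V$ makes $\|g\|_{(L^1)_{v_m}}$ bounded uniformly in $y_0$; the inequality then reads $w_n(x_0+y_0)\lesssim w_m(x_0)\,v_m(y_0)$ directly. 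The paper does exactly this with $f_x = w_m(x)^{-1} T_x 1_{[0,2]^d}$ and $\psi_y = v_m(y)^{-1} T_y \phi$, bounding $f_x*\psi_y$ from below via $1_{[0,2]^d}*\phi \ge 1_{[0,1]^d}$ and using $(\condwM)$ on both systems (and a preliminary index shift $n\to n'$, $m'\to m$) to make all constants uniform.
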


\begin{proof}
Before we show the equivalences, we note that if either of the mappings in $(ii)$ or $(iii)$ is well-defined, it is automatically continuous 
by the closed graph theorem since all involved spaces are Fr\'echet spaces.

    $(i) \Rightarrow (ii)$: Fix $n \in \mathbb{N}$. 
By assumption we can choose
$m \geq n$ and $C > 0$ so that $w_n(x + y) \leq C w_m(x) v_m(y)$ for all $x, y \in {\mathbb R}^d$. 
Then
   	\[ |f * \psi| w_n \leq C [|f w_m| * |\psi v_m|] \]
   for any $f \in E_\W$ and $\psi \in L^1_\V$.
    By the solidity of $E$,
    \[
        \| |f * \psi| w_n \|_E \leq C \|f w_m\|_E \|\psi v_m\|_{L^1} . 
    \]
    We obtain the well-definedness of $E_\W \times L^1_\V \to E_\W$.

    $(ii) \Rightarrow (iii)$: Since $\partial^\alpha [f * \psi] = f * \psi^{(\alpha)}$ for all $f \in E_\W$, $\psi \in \K^1_\V$, and $\alpha \in \mathbb{N}^d$, this follows immediately.

    $(iii) \Rightarrow (i)$: Fix
$n \in \N$. 
Condition $(\condwM)$ allows us to
choose $n' \geq n$ and $C > 0$ so that $T_y w_{n} \leq C w_{n'}$ for any $y \in [-1, 2]^d$.
		Assuming 
    $(iii)$,
we see that the map $* : E_\W \times \K^1_\V \to E^\infty_\W$ is continuous. Consequently, there exists some $m' \geq n'$ and $C' > 0$ such that
\begin{equation}\label{eq:250407-5} 
\| (f * \psi) w_{n'} \|_E \leq C' \| f w_{m'} \|_E \cdot \max_{|\alpha| \leq m'}  \| \partial^\alpha\psi v_{m'} \|_{L^1} , \qquad f \in E_\W, \psi \in \K^1_\V.
\end{equation}
Once again, condition $(\condwM)$ allows us to choose $m \geq m'$ and $C'' > 0$ such that 
\begin{equation}\label{eq:250407-2}
T_{-y} w_{m'} \leq C'' w_m \quad \text{and} \quad T_{-y} v_{m'} \leq C'' v_m , \qquad \forall y \in [-1, 2]^d . 
\end{equation} 
		Pick a non-negative function
$\phi \in \D([-1,0]^d)$ with $\int \phi = 1$.
		Note that 
\begin{equation}\label{eq:250407-41}
1_{[0, 2]^d} * \phi \geq 1_{[0, 1]^d}.
\end{equation}
Fix $x,y \in {\mathbb R}^d$.
		We then define the functions
			\[ f_x := \frac{1}{w_m(x)} T_x 1_{[0, 2]^d}, 
\qquad \psi_y := \frac{1}{v_m(y)} T_y \phi , \qquad x, y \in \mathbb{R}^d . \]
Then $f_x \in E_\W$ and $\psi_y \in \K^1_\V$.  
Since\begin{equation}\label{eq:250407-41a}
	f_x w_{m'} \leq C'' w_m(x) f_x = C'' T_x 1_{[0, 2]^d}
\end{equation}
due to (\ref{eq:250407-2}), it follows from the solidity and translation invariance of $E$ that
\begin{equation}\label{eq:250505-1}
	\| f_x w_{m'} \|_E \leq C'' C_0 \| 1_{[0, 2]^d} \|_E , \qquad x \in \mathbb{R}^d .
\end{equation}
Similarly, 
\begin{equation}\label{eq:250505-2}
	\|\psi_y^{(\alpha)} v_{m'}\|_{L^1} \leq C'' \|\phi^{(\alpha)}\|_{L^1}
\end{equation}
holds for any $\alpha \in \N^d$ and $y \in \mathbb{R}^d$, again due to (\ref{eq:250407-2}).

Now note that
\[
	f_x * \psi_y = \frac{1}{w_m(x) v_m(y)} T_{x + y} [1_{[0, 2]^d} * \phi] , \qquad x, y \in \mathbb{R}^d .
\]
The support of $1_{[0,2]^d}*\phi$ is contained in $[-1,2]^d$,
so we deduce from (\ref{eq:250407-41})
\begin{align*}
	\frac{w_n(x + y)}{w_m(x) v_m(y)} \| 1_{[0, 1]^d} \|_E 
	&\leq \frac{w_n(x + y)}{w_m(x) v_m(y)} \| 1_{[0, 2]^d} * \phi \|_E \\
	&\leq C_0 \left\| \frac{1}{w_m(x) v_m(y)} T_{x + y} [1_{[0, 2]^d} * \phi] \cdot w_n(x + y) \right\|_E \\
	&\leq C C_0 \left\| \frac{1}{w_m(x) v_m(y)} T_{x + y} [1_{[0, 2]^d} * \phi] \cdot w_{n'} \right\|_E \\
	&= C C_0 \| (f_x * \psi_y) w_{n'} \|_E .
\end{align*}
We now estimate the norm above on the most right-hand side using (\ref{eq:250407-5}),
(\ref{eq:250505-1}) and (\ref{eq:250505-2}):
\begin{align*}
	\| (f_x * \psi_y) w_{n'} \|_E 
	&\leq C' \| f_x w_{m'} \|_E \cdot \max_{|\alpha| \leq m'} \| \psi_y^{(\alpha)} v_{m'} \|_{L^1} \\
	&\leq C' (C'')^2 C_0 \| 1_{[0, 2]^d} \|_E \cdot \max_{|\alpha| \leq m'} \| \phi^{(\alpha)} \|_{L^1} .
\end{align*}
Putting everything together, we obtain a constant $\widetilde{C} > 0$ such that
%
\[
	w_n(x + y) \leq \widetilde{C} w_m(x) v_m(y), \qquad x, y \in \mathbb{R}^d,
\]
implying that $\W$ is $\V$-moderate.
\end{proof}

\section{The convolutor and multiplier spaces}
\label{sec:ConvMultSp}
For any weight function system $\mathcal{W}$, we can define the continuous linear mappings:
\begin{equation}\label{eq:convolution_multiplication}
    * : E_\mathcal{W} \times \mathcal{D} \to E^\infty_\mathcal{W}, \qquad 
    \cdot : E^\infty_\mathcal{W} \times \mathcal{B} \to E^\infty_\mathcal{W}.
\end{equation}
The first operation represents convolution, as shown in Proposition \ref{p:EWConvCompact}, and the second denotes multiplication, which can be easily verified.

This section aims to identify the largest possible spaces of distributions for which convolution with \(\mathcal{D}\) (as discussed in Section~\ref{s5.1}) and multiplication with \(\mathcal{B}\) (as discussed in Section~\ref{s5.2}) map within \(E^\infty_\mathcal{W}\). We also investigate how convolution and multiplication behave when the test function spaces are replaced by larger or smaller ones, respectively.

\subsection{The convolutor spaces}
\label{s5.1}

Let 
$\mathcal{W}$ be a weight function system. We consider the convolutor space
\[
\mathcal{O}'_C(\mathcal{D}, E_\mathcal{W}) := \{ f \in \mathcal{D}' \,:\, f * \varphi \in E_\mathcal{W} \text{ for all } \varphi \in \mathcal{D} \} .
\]
Since $\mathcal{D}$ is closed under differentiation, so is $\mathcal{O}'_C(\mathcal{D}, E_\mathcal{W})$. 
Therefore, one has that $f * \varphi \in E^\infty_\W$ for any $f \in \mathcal{O}'(\D, E_\W)$ and $\varphi \in \D$, that is,
\begin{equation}\label{eq:250224-3}
\mathcal{O}'_C(\mathcal{D}, E_\mathcal{W}) = \{ f \in \mathcal{D}' \,:\, f * \varphi \in E^\infty_\mathcal{W} \text{ for all } \varphi \in \mathcal{D} \} .
\end{equation}

We directly obtain the following inclusion 
from Proposition \ref{p:EWConvCompact}.

\begin{lemma}\label{l:InclusionEWConvSp}
Let $\mathcal{W}$ be a weight function system satisfying $(\condwM)$. 
Then $E_\W \subseteq \OC^\prime(\D, E_\W)$.
\end{lemma}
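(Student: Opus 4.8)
The plan is to show that any $f \in E_\W$ gives a distribution for which $f * \varphi \in E_\W$ for all $\varphi \in \D$, i.e.\ $f \in \OC'(\D, E_\W)$. First I would note that $E_\W \subseteq L^1_{\loc}(\R^d)$, so every $f \in E_\W$ determines a distribution in $\D'$ in the usual way, and the distributional convolution $f * \varphi$ for $\varphi \in \D$ agrees with the integral $\int f(\cdot - y)\varphi(y)\,dy$; in particular it coincides with the convolution extension furnished by Lemma~\ref{l:ExtensionConvolution} when restricted to $E \times C_{\rm c}(\R^d)$. So the content of the lemma reduces to the statement that $f * \varphi \in E_\W$ whenever $f \in E_\W$ and $\varphi \in \D$.

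Next, since $\W$ satisfies $(\condwM)$, Theorem~\ref{t:EWConvModerate} tells us (taking $\V = \mathbf 1$, say, or more directly using Proposition~\ref{p:EWConvCompact}) that the convolution map $* : E_\W \times \D(K) \to E^\infty_\W$ is well defined and continuous for every compact $K \Subset \R^d$. Indeed Proposition~\ref{p:EWConvCompact} gives exactly this: for any compact $K$, the map $* : E_\W \times \D(K) \to E^\infty_\W$ is well defined and continuous. Given $\varphi \in \D$, choose $K \Subset \R^d$ with $\supp \varphi \subseteq K$; then $f * \varphi \in E^\infty_\W \subseteq E_\W$ for every $f \in E_\W$. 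Since $\varphi \in \D$ was arbitrary, this shows $f \in \OC'(\D, E_\W)$, and hence $E_\W \subseteq \OC'(\D, E_\W)$.

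There is essentially no obstacle here: the lemma is a direct unpacking of the definition of $\OC'(\D, E_\W)$ combined with the already-established smoothing property of convolution with compactly supported test functions (Proposition~\ref{p:EWConvCompact}, itself a consequence of Lemma~\ref{l:ConvGenOmega} and condition $(\condwM)$). The only mild point worth a sentence is the compatibility of the distributional convolution with the Bochner-integral convolution of Lemma~\ref{l:ExtensionConvolution}, so that the membership ``$f * \varphi \in E_\W$'' in the definition of the convolutor space is interpreted consistently; this follows from \eqref{eq:conv_equality} in the proof of Lemma~\ref{l:ExtensionConvolution}. I would therefore present the argument in one short paragraph, citing Proposition~\ref{p:EWConvCompact} for the key inclusion.
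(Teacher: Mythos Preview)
Your proposal is correct and follows the same approach as the paper: the lemma is stated as an immediate consequence of the fact that convolution with compactly supported test functions maps $E_\W$ into $E^\infty_\W \subseteq E_\W$. The paper attributes this to Theorem~\ref{t:EWConvModerate}(ii) or (iii), whereas your citation of Proposition~\ref{p:EWConvCompact} is in fact the more direct and accurate one, since only $(\condwM)$ is assumed and no auxiliary weight system $\V$ is needed.
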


Let $\V$ now be another weight function system satisfying $(\condwM)$. 
Then $\check{\V} = (\check{v}_n)_{n \in \N}$ satisfies $(\condwM)$ as well so that $\K^1_{\check{\V}}$ is translation-invariant.
For any $f \in (\K^1_{\check{\V}})^\prime$ and $\varphi \in \K^1_\V$, we may now define the convolution
	\[ f * \varphi(x) := \ev{f}{T_x \check{\varphi}} . \]
This gives a linear mapping 
$(\K^1_{\check{\V}})^\prime \times \K^1_\V \to C(\mathbb{R}^d)$.	

We consider another convolutor space
	\[ \OC^\prime(\K^1_\V, E_\W) := \{ f \in (\K^1_{\check{\V}})^\prime \,:\, f * \varphi \in E_\W \text{ for all } \varphi \in \K^1_\V \} . \]
As in (\ref{eq:250224-3}),
we have
\begin{equation}\label{eq:250224-31}
 \OC^\prime(\K^1_\V, E_\W) = \{ f \in (\K^1_{\check{\V}})^\prime \,:\, f * \varphi \in E_\W^\infty \text{ for all } \varphi \in \K^1_\V \} . \end{equation}
As $\D$ is dense in $\K^1_\V$, by Corollary \ref{c:DenseInclusionD}
with $E=L^1$, we then have the embedding
	\begin{equation}\label{eq:250407-71} f \in \OC^\prime(\K^1_\V, E_\W) \mapsto f_{|\mathcal{D}} \in \OC^\prime(\D, E_\W) . \end{equation}
\begin{lemma}
Let $\mathcal{W}$ and $\mathcal{V}$ be weight function systems satisfying \((\condwM)\). 
Assume that $\mathcal{W}$ is $\mathcal{V}$-moderate. Then
\[
E_\mathcal{W} \subseteq \OC'(\mathcal{K}^1_\mathcal{V}, E_\mathcal{W}).
\]
\end{lemma}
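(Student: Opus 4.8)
The plan is to mirror the argument for Lemma~\ref{l:InclusionEWConvSp}: fix $f \in E_\W$ and verify the two requirements in the definition of $\OC^\prime(\K^1_\V, E_\W)$, namely that (a) $f$ defines an element of $(\K^1_{\check{\V}})^\prime$ through the integration pairing, and (b) $f * \varphi \in E_\W$ for every $\varphi \in \K^1_\V$. For (b) the point is to identify the convolution $f * \varphi(x) = \ev{f}{T_x \check\varphi}$ with the Bochner-integral extension of convolution supplied by Lemma~\ref{l:ExtensionConvolution}, so that Theorem~\ref{t:EWConvModerate} (applicable since $\W$ is $\V$-moderate and $\W,\V$ satisfy $(\condwM)$) can be invoked.

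For (a): since every weight in $\W$ is $\geq 1$, solidity of $E$ gives $f \in E$ from $f w_0 \in E$, and then $f \in W(L^1,L^\infty)$ by Lemma~\ref{l:InclusionWienerSpace}. On the other side, because $\check v_n \geq 1$ we have $\K^1_{\check{\V}} \subseteq \K^1_{\mathbf 1} = \D_{L^1}$, and a standard Sobolev embedding on a tiling of $\R^d$ by unit cubes shows that functions in $\D_{L^1}$ have summable sup-norms over those cubes; equivalently $\K^1_{\check{\V}} \subseteq W(L^\infty, L^1)$ with a continuous inclusion. The bilinear pairing $W(L^1,L^\infty) \times W(L^\infty,L^1) \to \C$, $(g,h) \mapsto \int g h$, is well defined and bounded (estimate cube by cube), so $\varphi \mapsto \int f \varphi$ is a continuous functional on $\K^1_{\check{\V}}$; this realizes $E_\W \subseteq (\K^1_{\check{\V}})^\prime$.

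For (b): given $\varphi \in \K^1_\V$ one checks $\check\varphi \in \K^1_{\check{\V}}$, hence $T_x \check\varphi \in \K^1_{\check{\V}}$ by translation invariance of $\K^1_{\check{\V}}$, so $f * \varphi(x) = \ev{f}{T_x \check\varphi} = \int f(x-y)\varphi(y)\,dy$ is the classical convolution of the $L^1_{\loc}$ function $f$ with $\varphi$. By the computation carried out in the proof of Lemma~\ref{l:ExtensionConvolution}, this classical convolution coincides with the extended convolution $f *_1 \varphi$ defined there (the two agree as elements of $L^1_{\loc}$, both being tested against $C_{\rm c}(\R^d)$). Since $\K^1_\V \subseteq L^1$ and $\W$ is $\V$-moderate, Theorem~\ref{t:EWConvModerate}$(iii)$ yields $f * \varphi \in E^\infty_\W \subseteq E_\W$. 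Combining (a) and (b) gives $f \in \OC^\prime(\K^1_\V, E_\W)$, and hence the claimed inclusion.

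The only genuine subtlety — the ``main obstacle'' — is the reconciliation in (b) of the three descriptions of $f*\varphi$ (the distributional pairing $\ev{f}{T_x\check\varphi}$, the classical convolution integral, and the Bochner-integral extension of Lemma~\ref{l:ExtensionConvolution} that Theorem~\ref{t:EWConvModerate} speaks about); once this identification is made explicit the statement is a direct consequence of results already established, and all remaining steps are routine.
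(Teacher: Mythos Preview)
Your proposal is correct and follows essentially the same approach as the paper, which simply invokes Theorem~\ref{t:EWConvModerate}(ii). You are in fact more careful than the paper's one-line proof: you explicitly verify membership in $(\K^1_{\check{\V}})^\prime$ and reconcile the distributional, classical, and Bochner-integral notions of convolution, details the paper leaves implicit.
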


\begin{proof}
Simply use Theorem \ref{t:EWConvModerate}(iii).
\end{proof}

Any $f \in \OC^\prime(\D, E_\W)$, respectively $f \in \OC^\prime(\K^1_\V, E_\W)$, can be seen as an element of $L(\D, E^\infty_\W)$, respectively $L(\K^1_\V, E^\infty_\W)$, via the embedding
	\[ f \mapsto *_f , \quad \text{where } *_f(\varphi) := f * \varphi . \]
In fact, we can fully characterize the convolutor spaces as those operators that commute with translation, as observed by H\"{o}rmander
\cite{Hormander1960}.

\begin{proposition} For \( L \in L(\D, E^\infty_\W) \), respectively \( L \in L(\K^1_\V, E^\infty_\W) \), there exists an element  
\( f \in \OC'(\D, E_\W) \), respectively \( f \in \OC'(\K^1_\V, E_\W) \), such that \( L = *_f \)  
if and only if  
\begin{equation}\label{eq:CommTrans}
T_x \circ L = L \circ T_x \quad \text{for every } x \in \R^d .
\end{equation}
\end{proposition}

\begin{proof}
From the definition of the convolution,
any convolution operator $*_f$ satisfies \eqref{eq:CommTrans}
in both cases.
On the other hand, let $L \in L(\D, E^\infty_\W)$ satisfy \eqref{eq:CommTrans} and define $f \in \D^\prime$ by $\ev{f}{\phi} = \ev{\delta}{L(\check{\phi})}$ for $\phi \in \D$.
Then, for any $\varphi \in \D$ and $x \in \R^d$,
	\[ f * \varphi(x) = \ev{f}{T_x \check{\varphi}} = \ev{\delta}{L(T_{-x} \varphi)} = \ev{\delta}{T_{-x} L(\varphi)} = L(\varphi)(x) . \] 
Hence, $f * \varphi = L(\varphi) \in E^\infty_\W$. Thus, $f \in \OC^\prime(\D, E_\W)$.
If $L \in L(\K^1_\V, E^\infty_\W)$, the same argument yields $f \in \OC^\prime(\K^1_\V, E_\W)$.
\end{proof}

From the definition,
we deduce that
the operator
$f \mapsto f|_{\mathcal D}$ is injective from 
$
  \OC'(\mathcal{K}^1_\mathcal{V}, E_\mathcal{W})$
  to
  $
  \OC'(\mathcal{D}, E_\mathcal{W})$.
We now investigate under which conditions the spaces $\OC'(\mathcal{K}^1_\mathcal{V}, E_\mathcal{W})$ and $\OC'(\mathcal{D}, E_\mathcal{W})$ coincide. 
If  $\mathcal{W}$ is $\mathcal{V}$-moderate, this is indeed the case. The following result generalizes \cite[Proposition~6.2]{D-V-TopPropConvSpSTFT}.

\begin{theorem}\label{t:ExtensionOC}
Let $\mathcal{V}$ and $\mathcal{W}$ be weight function systems satisfying \((\condwM)\). Then
\begin{equation}\label{eq:ExtensionOC}
  \OC'(\mathcal{K}^1_\mathcal{V}, E_\mathcal{W}) = \OC'(\mathcal{D}, E_\mathcal{W})
\end{equation}
if and only if $\mathcal{W}$ is $\mathcal{V}$-moderate.
\end{theorem}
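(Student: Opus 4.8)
The plan is to prove the two implications separately, noting that one direction is essentially already established.

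First, for the ``if'' direction: assume $\mathcal{W}$ is $\mathcal{V}$-moderate. Then by Theorem~\ref{t:EWConvModerate}, the convolution $*: E_\mathcal{W} \times \mathcal{K}^1_\mathcal{V} \to E^\infty_\mathcal{W}$ is well defined and continuous. I would take any $f \in \OC'(\mathcal{D}, E_\mathcal{W})$ and show that $f$ extends to an element of $(\mathcal{K}^1_{\check{\mathcal{V}}})'$ whose convolution with every $\varphi \in \mathcal{K}^1_\mathcal{V}$ lies in $E_\mathcal{W}$. The key point is that $\mathcal{D}$ is dense in $\mathcal{K}^1_\mathcal{V}$ (by Corollary~\ref{c:DenseInclusionD} with $E = L^1$), combined with the observation that for $f \in \OC'(\mathcal{D}, E_\mathcal{W})$ the map $\varphi \mapsto f * \varphi$, $\mathcal{D} \to E^\infty_\mathcal{W}$, is continuous (closed graph theorem, since both are suitable Fréchet/LF spaces — one would check this just as in the standard $\mathcal{O}'_C(\mathcal{D}, \mathcal{S})$ setting). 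Continuity plus density lets us extend the convolution operator $\varphi \mapsto f * \varphi$ uniquely to $\mathcal{K}^1_\mathcal{V} \to E^\infty_\mathcal{W}$; one then identifies $f$ with an element of $(\mathcal{K}^1_{\check{\mathcal{V}}})'$ via $\langle f, \psi \rangle := (f * \check{\psi})(0)$ and checks compatibility, so that $f \in \OC'(\mathcal{K}^1_\mathcal{V}, E_\mathcal{W})$. Conversely, the inclusion $\OC'(\mathcal{K}^1_\mathcal{V}, E_\mathcal{W}) \hookrightarrow \OC'(\mathcal{D}, E_\mathcal{W})$ is just \eqref{eq:250407-71}, hence \eqref{eq:ExtensionOC} holds as an equality of sets (and, by the open mapping theorem, topologically if desired).

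Second, for the ``only if'' direction: assume \eqref{eq:ExtensionOC} holds; I want to deduce $\mathcal{V}$-moderateness of $\mathcal{W}$. By Theorem~\ref{t:EWConvModerate} it suffices to show that convolution $*: E_\mathcal{W} \times \mathcal{K}^1_\mathcal{V} \to E^\infty_\mathcal{W}$ is well defined. The idea is that every $g \in E_\mathcal{W}$ can be viewed as an element of $\OC'(\mathcal{D}, E_\mathcal{W})$ (Lemma~\ref{l:InclusionEWConvSp}), hence — by the hypothesis — as an element of $\OC'(\mathcal{K}^1_\mathcal{V}, E_\mathcal{W})$, which means precisely that $g * \varphi \in E_\mathcal{W}$ for all $\varphi \in \mathcal{K}^1_\mathcal{V}$. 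This gives well-definedness of the convolution map on $E_\mathcal{W} \times \mathcal{K}^1_\mathcal{V}$, and then Theorem~\ref{t:EWConvModerate} ($(ii) \Rightarrow (i)$, or rather the analogue with $\mathcal{K}^1_\mathcal{V}$ via $(iii) \Rightarrow (i)$) yields that $\mathcal{W}$ is $\mathcal{V}$-moderate. A small wrinkle to address is whether the embedding $E_\mathcal{W} \subseteq \OC'(\mathcal{D}, E_\mathcal{W})$ lands inside the subspace $\OC'(\mathcal{K}^1_\mathcal{V}, E_\mathcal{W})$ with the $convolution$ in the latter space restricting correctly to ordinary convolution — this is a routine compatibility check using that $g \in E_\mathcal{W} \subseteq L^1_{\mathrm{loc}}$ and the density of $\mathcal{D}$ in $\mathcal{K}^1_\mathcal{V}$.

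**Main obstacle.** I expect the delicate point to be the ``if'' direction, specifically establishing the continuity of $\varphi \mapsto f * \varphi : \mathcal{D} \to E^\infty_\mathcal{W}$ for a general $f \in \OC'(\mathcal{D}, E_\mathcal{W})$ and then verifying that its continuous extension to $\mathcal{K}^1_\mathcal{V}$ is genuinely realized as convolution against a single distribution in $(\mathcal{K}^1_{\check{\mathcal{V}}})'$ — in particular that the extended operator commutes with translations in the right way, so the pointwise formula $f * \varphi(x) = \langle f, T_x \check{\varphi}\rangle$ makes sense and agrees with the extension. The $\mathcal{V}$-moderateness of $\mathcal{W}$ is exactly what is needed for $\mathcal{K}^1_\mathcal{V}$ (and $\mathcal{K}^1_{\check{\mathcal{V}}}$) to be translation-invariant and for convolutions of the relevant objects to stay in the right spaces, so the structural input from Theorem~\ref{t:EWConvModerate} is indispensable here. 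The rest — the density argument, the closed graph applications, the equality as sets versus topologically — is standard functional-analytic bookkeeping following the pattern of \cite[Proposition~6.2]{D-V-TopPropConvSpSTFT}.
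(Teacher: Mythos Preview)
Your ``only if'' direction is essentially the paper's argument: $E_\W \subseteq \OC'(\D, E_\W) = \OC'(\K^1_\V, E_\W)$ gives well-definedness of $* : E_\W \times \K^1_\V \to E^\infty_\W$, and Theorem~\ref{t:EWConvModerate} yields $\V$-moderateness. The compatibility wrinkle you flag is indeed routine.

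Your ``if'' direction, however, has a real gap. The closed graph theorem gives continuity of $\varphi \mapsto f * \varphi$ from $\D$ (with its LF topology) to $E^\infty_\W$; concretely, for each $n$ and each compact $K$ you get $r_n, C_n$ with
\[
  \|f * \phi\|_{E_{w_n}} \leq C_n \max_{|\alpha| \leq r_n} \|\phi^{(\alpha)}\|_{L^\infty}, \qquad \phi \in \D(K).
\]
But to extend by density to $\K^1_\V$ you need continuity in the \emph{induced $\K^1_\V$-topology} on $\D$, i.e.\ an estimate of the form $\|f * \phi\|_{E_{w_n}} \leq C_n \|\phi\|_{(L^1)^{m}_{v_m}}$ valid for \emph{all} $\phi \in \D$, uniformly in the support. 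The estimate above does not provide this, and nothing in Theorem~\ref{t:EWConvModerate} (which concerns $E_\W$, not a general $f \in \OC'(\D, E_\W)$) bridges the gap. This is not bookkeeping; it is the heart of the matter.

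The paper's device is the Schwartz parametrix: for each $n$, write
\[
  f = \Delta^{\ell_n}(f * \chi_{0}) + f * \chi_{1}
\]
with $\chi_0 \in \D^{r_{m_n}}(\overline{B}(0,1/2))$ (only $C^{r_{m_n}}$, so that the estimate above, extended by density to $\D^{r_{m_n}}(\overline{B}(0,1/2))$, applies) and $\chi_1 \in \D(\overline{B}(0,1/2))$. Both $f * \chi_0$ and $f * \chi_1$ then lie in $E_{w_{m_n}}$, and now $\V$-moderateness lets you convolve these honest elements of $E_{w_{m_n}}$ with any $\phi \in \K^1_\V$ to land in $E_{w_n}$. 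The decomposition absorbs the dependence on the support of $\phi$, which is exactly what your density argument cannot do on its own.
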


Before proving the theorem, we fix some notation.  
For $r \in \mathbb{N}$ and $K \Subset \mathbb{R}^d$, we denote by $\mathcal{D}^r(K)$ the space of all $C^r$-functions on $\mathbb{R}^d$ supported in $K$.

\begin{proof}
Suppose first that \eqref{eq:ExtensionOC} holds. By Lemma~\ref{l:InclusionEWConvSp}, we have
\[
E_\mathcal{W} \subseteq \OC'(\mathcal{K}^1_\mathcal{V}, E_\mathcal{W}) .
\]
In particular, as in (\ref{eq:250224-31}), the convolution map
\[
* : E_\mathcal{W} \times \mathcal{K}^1_\mathcal{V} \to E^\infty_\mathcal{W}
\]
is well
defined. Hence, by Theorem~\ref{t:EWConvModerate}, it follows that $\mathcal{W}$ is $\mathcal{V}$-moderate.

Conversely, suppose that $\mathcal{W}$ is
$\mathcal{V}$-moderate. To prove \eqref{eq:ExtensionOC}, it suffices to show
\[
\OC'(\mathcal{D}, E_\mathcal{W}) \subseteq \OC'(\mathcal{K}^1_\mathcal{V}, E_\mathcal{W}),
\]
in view of \eqref{eq:250407-71}. Fix any $f \in \OC'(\mathcal{D}, E_\mathcal{W})$. By the closed graph theorem, for each compact set $K \Subset \mathbb{R}^d$, the mapping
\[
*_{f} : \mathcal{D}(K) \to E_\mathcal{W}, \quad \phi \mapsto f * \phi
\]
is continuous. Let $K=\overline{B}(0,1)$. Then, for each $n \in \mathbb{N}$, there exist $r_n \in \mathbb{N}$ and $C_n > 0$ such that
\begin{equation}\label{eq:*fineq}
\| f * \phi \|_{E_{w_n}} \leq C_n \sup_{\substack{\alpha \in \mathbb{N}^d \\ |\alpha| \le r_n}} \| \partial^\alpha \phi \|_{L^\infty(\overline{B}(0,1))}, \quad \phi \in \mathcal{D}(\overline{B}(0,1)).
\end{equation}
By density, \eqref{eq:*fineq} extends to all $\phi \in \mathcal{D}^{r_n}(\overline{B}(0, 1/2))$, so in particular, $f * \phi \in E_{w_n}$ for such $\phi$.

Since $\mathcal{W}$ is
assumed  $\mathcal{V}$-moderate, for each $n \in \mathbb{N}$ there exist $m_n \geq n$ and a constant $C_n > 0$ such that
\[
w_n(x + y) \leq C_n w_{m_n}(x) v_{m_n}(y), \quad x, y \in \mathbb{R}^d.
\]
As in the proof of Theorem~\ref{t:EWConvModerate}$(i)\Longrightarrow(ii)$, this implies that
\[
\varphi * \psi \in E_{w_n}, \quad \text{for all } \varphi \in E_{w_{m_n}},\ \psi \in L^1_{v_{m_n}}.
\]

In a 
similar fashion as in Lemma~\ref{l:InclusionL^1andL^infty}, we can find
a large enough integer $\ell_n$ and functions 
$\chi_{m_n,0} \in \mathcal{D}^{r_{m_n}}(\overline{B}(0,1/2))$ and
$\chi_{m_n,1} \in \mathcal{D}(\overline{B}(0,1/2))$ such that
\[
f = \Delta^{\ell_n} (f * \chi_{m_n,0}) + f * \chi_{m_n,1} =: \Delta^{\ell_n} \varphi_{m_n,0} + \varphi_{m_n,1},
\]
where $\varphi_{m_n,0}, \varphi_{m_n,1} \in E_{w_{m_n}}$.
Then, for any $\phi \in \mathcal{K}^1_{\mathcal{V}}$, we have
\[
f * \phi = \varphi_{m_n, 0} * \Delta^{\ell_{n}} \phi + \varphi_{m_n, 1}* \phi \in E_{w_n},
\]
for all $n \in \mathbb{N}$, hence $f * \phi \in E_\mathcal{W}$. We conclude that $f \in \OC'(\mathcal{K}^1_\mathcal{V}, E_\mathcal{W})$, as desired.
\end{proof}

\begin{remark}
Following the proof of Theorem \ref{t:ExtensionOC},
we have in fact shown that a distribution $f \in \D^\prime$ belongs to $\OC^\prime(\D, E_{\mathcal{W}})$ if and only if there is some $n \in \N$ and $f_\alpha \in E_\W$ for $|\alpha| \leq n$ such that
	\[ f = \sum_{|\alpha| \leq n} \partial^\alpha f_\alpha . \]
In \cite[Chapitre VII, Th\'eor\`eme IX, p.~244]{S-ThDist}, Schwartz showed this statement for the particular case of $\S'$, i.e., when $\W = \W_{\text{pol}}$.
Moreover, he also showed \cite[Chapitre VI, Th\'eor\`eme XXV, p.~201]{S-ThDist} that $\OC^\prime(\D, \D_{L^p}) = \D_{L^p}^\prime := (\D_{L^q})^\prime$, $p \in [1, \infty)$ and $q$ its conjugate index, that $\OC^\prime(\D, \mathcal{B}) = \B^\prime := (D_{L^1})^\prime$, and that $\OC^\prime(\D, \dot{\B}) = \dot{\B}^\prime := \overline{\D}^{\B^\prime}$.
We also refer to \cite{D-V-TopPropConvSpSTFT} for similar results for more general spaces.
\end{remark}

\subsection{The multiplier spaces}
\label{s5.2}
Let $\V, \W$ be weight function systems.
Recall that
\[
\B_\V = \bigcap_{n \in \N} (L^\infty)^n_{v_n}, \quad
\dot{\B}_\V = \bigcap_{n \in \N} (L^0)^n_{v_n}.
\]
We consider the spaces
\[ 
\OM(\dot{\B}_\V, E^\infty_\W) := \left\{ f \in \D^{\prime} \,:\, f \cdot \varphi \in E^\infty_\W \text{ for all } \varphi \in \dot{\B}_\V \right\}, 
\]
and
\[ 
\OM(\B_\V, E^\infty_\W) := \left\{ f \in \D^{\prime} \,:\, f \cdot \varphi \in E^\infty_\W \text{ for all } \varphi \in \B_\V \right\}.
\]

We find the following structural theorem for $\OM(\dot{\B}_\V, E^\infty_\W)$.

\begin{theorem}\label{t:CharOM}
Suppose the weight system $\V$ satisfies $(\condwM)$. 
Then $f \in \D^\prime$ belongs to $\OM(\dot{\B}_\V, E^\infty_\W)$ if and only if
$f \in C^\infty(\mathbb{R}^d)$ 
and for every 
$\alpha \in \N^d$ and $n \in \N$,  there exists $m \in \N$ such that
\begin{equation}\label{eq:OMCond}
f^{(\alpha)} \cdot \frac{w_n}{v_m} \in E_{\semloc}.
\end{equation}
\end{theorem}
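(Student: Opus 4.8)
The plan is to prove both implications by reducing everything to the semi-local space via Proposition~\ref{p:E=MEiffSemiLocal} and the parametrix decomposition of Lemma~\ref{l:InclusionL^1andL^infty}. First I would treat the easier direction: assume $f \in C^\infty$ and that the condition \eqref{eq:OMCond} holds for all $\alpha, n$. Fix $\varphi \in \dot{\B}_\V$ and $\alpha \in \N^d$, $n \in \N$; by the Leibniz rule it suffices to bound $\|f^{(\beta)} \varphi^{(\gamma)} w_n\|_E$ for $\beta + \gamma = \alpha$. Pick $m$ as in \eqref{eq:OMCond} for the pair $(\beta, n)$, so $f^{(\beta)} w_n / v_m \in E_{\semloc}$. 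Since $\varphi \in \dot{\B}_\V \subseteq (L^0)^{|\gamma|}_{v_{|\gamma|}} \subseteq (L^0)^{|\gamma|}_{v_m}$ (enlarging $m$ if necessary, using that $\V$ is increasing), the product $\varphi^{(\gamma)} v_m$ lies in $L^0 \subseteq C_0$-closure-type space; more precisely $\varphi^{(\gamma)} v_m \in L^0(\R^d)$. Then $f^{(\beta)} \varphi^{(\gamma)} w_n = (f^{(\beta)} w_n / v_m) \cdot (\varphi^{(\gamma)} v_m)$ is a product of an element of $E_{\semloc}$ with an element of $L^0 \subseteq C_0(\R^d)$ (after mollifying, or directly since $L^0$-functions are bounded and vanish at infinity), so by Proposition~\ref{p:E=MEiffSemiLocal} it belongs to $E$. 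Summing over $\beta + \gamma = \alpha$ gives $f \cdot \varphi \in E^\infty_\W$, i.e.\ $f \in \OM(\dot{\B}_\V, E^\infty_\W)$.

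For the converse, suppose $f \in \OM(\dot{\B}_\V, E^\infty_\W)$. Smoothness of $f$: since $\dot{\B}_\V$ contains $\D$ and in particular contains, for any compact $K$, a function $\equiv 1$ on $K$, multiplying $f$ by such functions shows $f$ agrees locally with elements of $E^\infty_\W \subseteq C^\infty$, hence $f \in C^\infty(\R^d)$. Now fix $\alpha \in \N^d$ and $n \in \N$; I must produce $m$ with $f^{(\alpha)} w_n / v_m \in E_{\semloc}$. The map $\varphi \mapsto f \cdot \varphi$ is linear from $\dot{\B}_\V$ to $E^\infty_\W$ and, by the closed graph theorem (both are Fréchet), continuous. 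Hence there exist $m \in \N$, $k \in \N$, $C > 0$ such that $\|f^{(\alpha)} \varphi w_n\|_E \le \|f \cdot \varphi\|_{E^{|\alpha|}_{w_n}} \le C \|\varphi\|_{(L^0)^k_{v_k}}$, and after enlarging $m$ we may take $m \ge k$ so $\|\varphi\|_{(L^0)^k_{v_k}} \lesssim \sup_{|\gamma|\le k}\|\varphi^{(\gamma)} v_m\|_{L^\infty}$ (the last step uses $(\condwM)$ on $\V$ only if needed; actually it is just monotonicity of $\V$). To extract the pointwise/semi-local bound on $f^{(\alpha)} w_n/v_m$, I would test against a family of localized bump functions: for each $R>0$ and each point, take $\varphi = \psi_R \cdot \chi_{\text{bump}}$ with $\psi_R$ a cutoff of $1_{\overline B(0,R)}$ and the bump chosen so that $\varphi^{(\gamma)} v_m$ has controlled $L^\infty$-norm while $\varphi v_m^{-1}$ is bounded below on $\overline B(0,R)$; more cleanly, use the standard trick (as in the proof of Proposition~\ref{p:E=MEiffSemiLocal}, and as in \cite{D-V-WeighIndLimitsSpUltaDiffFunc}) that $\OM$-continuity against the specific element $1/v_m$ smoothed by a fixed mollifier — which lies in $\dot{\B}_\V$ by Lemma~\ref{l:SmoothWeightFuncSyst}-type estimates when $\V$ satisfies $(\condwM)$ — forces $f^{(\alpha)} w_n / v_{m'} \in E$ for a slightly larger $m'$, hence in $E_{\semloc}$.

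The main obstacle is the converse direction, specifically manufacturing a single test function $\varphi \in \dot{\B}_\V$ (or a bounded family in $\dot{\B}_\V$) that simultaneously "sees" $f^{(\alpha)} w_n / v_m$ globally. The difficulty is twofold: one must use $(\condwM)$ on $\V$ to ensure that a smoothed reciprocal weight $(1/v_m) * \chi$ is still in $\dot{\B}_\V$ with all derivatives controlled by $v_{m'}$ for some $m' \ge m$ (this is exactly where Lemma~\ref{l:SandwhichSmoothFunc} and Lemma~\ref{l:SmoothWeightFuncSyst} enter), and one must transfer the resulting $E$-membership of $f^{(\alpha)} \cdot w_n \cdot (1/v_{m'} * \chi)$ to the unsmoothed quotient $f^{(\alpha)} w_n / v_{m''}$ in $E_{\semloc}$, which requires comparing $1/v_{m'}*\chi$ with $1/v_{m''}$ from below on balls — again a $(\condwM)$-type estimate, since $(1/v_{m'}*\chi)(x) \gtrsim 1/v_{m''}(x)$ once $v_{m''}(x+y) \lesssim v_{m'}(x)$ for $|y|$ small. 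Organizing these weight comparisons so that all the "enlarge $m$" steps telescope into a single final $m$ is the bookkeeping heart of the argument; the functional-analytic input (closed graph theorem plus $E$ solid) is routine once the weights are in place.
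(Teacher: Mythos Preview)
Your sufficiency direction and the overall architecture are essentially the paper's. One small point there: Proposition~\ref{p:E=MEiffSemiLocal} is stated for multipliers in $C_0(\R^d)$, not $L^0$; the paper handles this by first passing to an equivalent \emph{continuous} weight system via Lemma~\ref{l:SmoothWeightFuncSyst}, so that $\varphi^{(\gamma)} v_m$ is genuinely continuous and hence in $C_0$. Your parenthetical ``after mollifying'' gestures at this but you should make it explicit.

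For the necessity direction there is a real gap in what you call the ``cleaner'' approach. The smoothed reciprocal $(1/v_{m'})*\chi$ is \emph{not} in $\dot{\B}_\V$, and after extending the continuity estimate by density (Lemma~\ref{l:StronglyReduced}) to $(L^0)^k_{v_k}$ it is still not an admissible test function: the $(\condwM)$ estimates give you $|\partial^\beta[(1/v_{m'})*\chi]|\, v_k \leq C$, i.e.\ membership in $(L^\infty)^k_{v_k}$, but \emph{not} vanishing at infinity. That vanishing would require something like $(\condwN)$, which is not assumed. So you cannot plug $(1/v_{m'})*\chi$ into the estimate $\|f^{(\alpha)}\phi\, w_n\|_E \leq C\|\phi\|_{(L^\infty)^k_{v_k}}$, because that estimate is only known for $\phi\in (L^0)^k_{v_k}$. (Also, the inequality $\|f^{(\alpha)}\varphi\, w_n\|_E \leq \|f\cdot\varphi\|_{E^{|\alpha|}_{w_n}}$ that you wrote is false as stated; the paper instead shows directly, by induction on $|\alpha|$ via Leibniz and the fact that $\dot{\B}_\V$ is differentiation-invariant, that $\phi\mapsto f^{(\alpha)}\phi$ maps $\dot{\B}_\V\to E_\W$, and applies the closed graph theorem to \emph{that} map.)

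The paper's remedy is neat and worth learning: rather than test with a single smoothed reciprocal, it folds an \emph{arbitrary} $\rho\in C_0(\R^d)$ into the construction, setting $\rho_m := \chi * (\rho^1/v_{m_1})$ with $\rho^1(x)=\max_{|y|\leq 1}|\rho(x+y)|$. The factor $\rho^1\in C_0$ supplies exactly the decay missing above, so $\rho_m\in (L^0)^m_{v_m}$ and the estimate applies; a lower bound $|\rho|/v_{m_2}\leq C\rho_m$ then yields $f^{(\alpha)}\cdot (w_n/v_{m_2})\cdot\rho\in E$ for \emph{every} $\rho\in C_0$, and Proposition~\ref{p:E=MEiffSemiLocal} converts this into $f^{(\alpha)} w_n/v_{m_2}\in E_{\semloc}$. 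Your first idea (multiply $(1/v_{m'})*\chi$ by a cutoff $\psi_R=\psi(\cdot/R)$ and take $R\to\infty$) can also be made to work---the Leibniz terms pick up only factors $R^{-|\gamma|}\leq 1$, so the $(L^\infty)^k_{v_k}$-norms stay uniformly bounded and one reads off the $E_{\semloc}$ bound directly---but you dismissed it in favor of the flawed shortcut.
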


\begin{proof}
Using Lemma~\ref{l:SmoothWeightFuncSyst}, we may assume that $\V$ consists of continuous functions.

$\bullet$ Suppose $f \in C^\infty(\mathbb{R}^d)$ and that for some $n \in \N$ there exists an integer $m \geq n$ for which $f^{(\beta)} w_n / v_m \in E_{\semloc}$ for all $|\beta| \leq n$.
For any $\varphi \in (L^0)^n_{v_m}$ and $\alpha \in \N^d$ with $|\alpha| \leq n$
\[
|(f \cdot \varphi)^{(\alpha)} w_n| \leq \sum_{\beta \leq \alpha} {\alpha \choose \beta} \left| f^{(\beta)} \cdot \varphi^{(\alpha - \beta)} w_n \right|
=\sum_{\beta \leq \alpha} {\alpha \choose \beta} \left| f^{(\beta)} \cdot \frac{w_n}{v_m} \cdot (\varphi^{(\alpha - \beta)} v_m) \right|
\]
by the Leibniz rule. Since $\varphi^{(\gamma)} v_m \in C_0(\mathbb{R}^d)$
for any $|\gamma| \le n$, it follows from Proposition~\ref{p:E=MEiffSemiLocal} and the solidity of $E$ that $(f \cdot \varphi)^{(\alpha)} w_n \in E$. This shows the sufficiency of the condition.

$\bullet$ Conversely, suppose $f \in \OM(\dot{\B}_\V, E^\infty_\W)$. 

For any compact set $K \Subset \mathbb{R}^d$ with non-empty interior, take $\phi_K \in \D$ such that $\phi_K \equiv 1$ on $K$. Then $f \cdot \phi_K \in E^\infty_\W  \subseteq C^\infty(\mathbb{R}^d)$, so $f \in C^\infty(\mathbb{R}^d)$.

Next, fix $\alpha \in \N^d$. By the closed graph theorem, the linear map
\[ 
\dot{\B}_\V \to E_\W : \quad \phi \mapsto f^{(\alpha)} \cdot \phi
\]
is continuous. Thus, for every $n \in \N$, there exist $m_0=m \in \N$ and $C > 0$ such that
\begin{equation}\label{eq:cdotfineq}
\| f^{(\alpha)} \cdot \phi \|_{E_{w_n}} \leq C \| \phi \|_{(L^\infty)^m_{v_m}} \quad \text{for all } \phi \in \dot{\B}_\V.
\end{equation}
By Lemma~\ref{l:StronglyReduced}, \eqref{eq:cdotfineq} extends to all $\phi \in (L^0)^{m+d}_{v_m}$.

Now fix $\rho \in C_0(\mathbb{R}^d)$. Define
\[ 
\rho^1(x) = \max_{|y| \leq 1} |\rho(x + y)|, \qquad \rho^2(x) = \max_{|y| \leq 1} |\rho^1(x + y)|.
\]
Then $\rho^1, \rho^2 \in C_0(\mathbb{R}^d)$. Choose
integers $m_2 \geq m_1 \geq m_0$ and $C > 0$ such that
\[ 
v_{m_j}(x + y) \leq C v_{m_{j+1}}(x), \quad \text{for all } x \in \mathbb{R}^d, ~ |y| \leq 1, ~ j = 0, 1.
\]
Let $\chi \in \D(\overline{B}(0,1))$ with $0 \leq \chi \leq 1$ and $\int \chi = 1$. Define
\[ 
\rho_m = \chi * \left( \frac{\rho^1}{v_{m_1}} \right).
\]
Then, for $|\beta| \leq m + d$,
\[
|\rho_m^{(\beta)}(x)| \leq \int_{\overline{B}(0,1)} |\chi^{(\beta)}(y)| \frac{\rho^1(x - y)}{v_{m_1}(x - y)} dy \leq C \|\chi\|_{(L^1)^{m + d}_1} \cdot \frac{\rho^2(x)}{v_m(x)},
\]
so $\rho_m \in (L^0)^{m+d}_{v_m}$, and \eqref{eq:cdotfineq}, extended to $(L^0)^{m+d}_{v_m}$, implies that $[f^{(\alpha)} \cdot \rho_m] w_n \in E$.

Also,
\[
\frac{|\rho(x)|}{v_{m_2}(x)} \leq C \int_{B(0,1)} \chi(y) \cdot \frac{\rho^1(x - y)}{v_{m_1}(x - y)} dy = C \rho_m(x).
\]
Hence,
\[
|f^{(\alpha)}| \cdot \frac{w_n}{v_{m_2}} \cdot |\rho| \leq C [|f^{(\alpha)}| \cdot \rho_m] w_n \in E.
\]
By the characteriation of $E_{\semloc}$ in Proposition~\ref{p:E=MEiffSemiLocal}, this proves \eqref{eq:OMCond}.
\end{proof}

\begin{corollary}
	\label{c:MultSpDotToInfty}
	Let $\V, \W$ be weight function systems such that $\V$ satisfies $(\condwM)$.
	Then $\OM(\dot{\B}_\V, E^\infty_\W) = \OM(\B_\V, (E_{\semloc})^\infty_\W) = \OM(\dot{\B}_\V, (E_{\semloc})^\infty_\W)$.
\end{corollary}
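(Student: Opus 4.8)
The strategy is to reduce all three multiplier spaces to the same internal condition~\eqref{eq:OMCond} via Theorem~\ref{t:CharOM}, using the fact established earlier that $E_{\semloc}$ is again a solid TIBF of bounded type (Lemma~\ref{l:ApproximSemilocTIBF}) and is idempotent under the semi-local operation, $(E_{\semloc})_{\semloc} = E_{\semloc}$ (Lemma~\ref{l:DotandSemilocProp}(ii)). Concretely, I would first apply Theorem~\ref{t:CharOM} to $E$ itself: $f \in \OM(\dot{\B}_\V, E^\infty_\W)$ iff $f \in C^\infty(\R^d)$ and for every $\alpha \in \N^d$, $n \in \N$ there is $m \in \N$ with $f^{(\alpha)} \, w_n / v_m \in E_{\semloc}$. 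Then I would apply the same theorem with $E$ replaced by $E_{\semloc}$: since $(E_{\semloc})_{\semloc} = E_{\semloc}$, the condition~\eqref{eq:OMCond} for $\OM(\dot{\B}_\V, (E_{\semloc})^\infty_\W)$ reads $f^{(\alpha)} \, w_n / v_m \in (E_{\semloc})_{\semloc} = E_{\semloc}$ — literally the same condition. Hence $\OM(\dot{\B}_\V, E^\infty_\W) = \OM(\dot{\B}_\V, (E_{\semloc})^\infty_\W)$ as sets, and since Theorem~\ref{t:CharOM} was only stated at the level of sets, this suffices.

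The remaining equality $\OM(\dot{\B}_\V, (E_{\semloc})^\infty_\W) = \OM(\B_\V, (E_{\semloc})^\infty_\W)$ is the one involving a change of test function space from $\dot{\B}_\V$ to the larger $\B_\V$. The inclusion $\OM(\B_\V, (E_{\semloc})^\infty_\W) \subseteq \OM(\dot{\B}_\V, (E_{\semloc})^\infty_\W)$ is immediate since $\dot{\B}_\V \subseteq \B_\V$. For the reverse inclusion, take $f$ in the right-hand side; by the (already proven) characterization it is smooth and satisfies~\eqref{eq:OMCond} with $E_{\semloc}$ in place of $E$, i.e.\ $f^{(\beta)} w_n / v_m \in E_{\semloc}$ (using idempotency once more). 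I would then run the \emph{sufficiency} half of the proof of Theorem~\ref{t:CharOM}, but now testing against an arbitrary $\varphi \in \B_\V = \bigcap_n (L^\infty)^n_{v_n}$ instead of $\dot{\B}_\V$: by the Leibniz rule,
\[
|(f \cdot \varphi)^{(\alpha)} w_n| \leq \sum_{\beta \leq \alpha} \binom{\alpha}{\beta} \left| f^{(\beta)} \cdot \frac{w_n}{v_m} \cdot (\varphi^{(\alpha - \beta)} v_m) \right|,
\]
and now $\varphi^{(\gamma)} v_m \in L^\infty(\R^d)$ (not merely $C_0$). Since $E_{\semloc}$ is an $L^\infty$-module under pointwise multiplication — because $E_{\semloc}$ is solid — each summand lies in $E_{\semloc}$ whenever $f^{(\beta)} w_n/v_m \in E_{\semloc}$, so $(f\cdot\varphi)^{(\alpha)} w_n \in E_{\semloc}$ for all $\alpha, n$, i.e.\ $f \cdot \varphi \in (E_{\semloc})^\infty_\W$. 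This gives $f \in \OM(\B_\V, (E_{\semloc})^\infty_\W)$.

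The only mild subtlety — and the step I would treat most carefully — is the bookkeeping on the quantifier order in~\eqref{eq:OMCond}: Theorem~\ref{t:CharOM} gives, for each $\alpha$ and $n$, an $m = m(\alpha, n)$; in the Leibniz expansion one needs the estimate simultaneously for all $\beta \leq \alpha$, which is why in the sufficiency argument one picks a single $m$ dominating the finitely many $m(\beta, n)$, $\beta \leq \alpha$, and uses monotonicity of $\V$ (i.e.\ $v_{m'} \leq v_m$ for $m' \leq m$, hence $1/v_m \leq 1/v_{m'}$) together with solidity of $E_{\semloc}$ to push all the terms into $E_{\semloc}$. This is routine but should be spelled out to avoid an apparent gap. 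No new machinery beyond Theorem~\ref{t:CharOM}, the idempotency relations of Lemma~\ref{l:DotandSemilocProp}, and the solidity of $E_{\semloc}$ is required.
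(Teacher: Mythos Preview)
Your proposal is correct and follows essentially the same approach as the paper: both arguments derive the first equality directly from Theorem~\ref{t:CharOM} together with the idempotency $(E_{\semloc})_{\semloc}=E_{\semloc}$, and then obtain the remaining inclusion by rerunning the sufficiency half of Theorem~\ref{t:CharOM} with $\varphi\in\B_\V$ (so that $\varphi^{(\gamma)}v_m\in L^\infty$) and the solidity of $E_{\semloc}$. Your explicit bookkeeping on choosing a single $m$ for all $\beta\le\alpha$ via the monotonicity of $\V$ is a welcome clarification of a step the paper leaves implicit.
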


\begin{proof}
By Lemma \ref{l:DotandSemilocProp}$(ii)$, we have $(E_{\semloc})_{\semloc} = E_{\semloc}$.  
Since the condition \eqref{eq:OMCond} is expressed entirely in terms of $E_{\semloc}$, Theorem \ref{t:CharOM} immediately implies
\[
\OM(\dot{\B}_\V, E^\infty_\W) = \OM(\dot{\B}_\V, (E_{\semloc})^\infty_\W).
\]  
The inclusion $\dot{\B}_\V \subseteq \B_\V$ then yields
\[
\OM(\B_\V, (E_{\semloc})^\infty_\W) \subseteq \OM(\dot{\B}_\V, (E_{\semloc})^\infty_\W).
\]  

Conversely, let $f \in \OM(\dot{\B}_\V, (E_{\semloc})^\infty_\W)$ be arbitrary. Applying Theorem \ref{t:CharOM} again, we deduce that $f$ satisfies \eqref{eq:OMCond}.  
Replacing $\varphi \in (L^0)_{w_m}$ by $\varphi \in (L^\infty)_{w_m}$, an argument analogous to the proof of sufficiency in Theorem \ref{t:CharOM} shows that $f \in \OM(\B_\V, (E_{\semloc})^\infty_\W)$.
\end{proof}

Under an additional assumption on $\V$ or $\W$, the space $\OM(\B_\V, E^\infty_\W)$ coincides with three spaces
appearing in Corollary \ref{c:MultSpDotToInfty}.
\begin{corollary}
	\label{c:MultSpEquivDotInfty}
	Let $\V, \W$ be weight function systems satisfying $(\condwM)$.
	If either $\V$ or $\W$ satisfies $(\condwN)$, then $\OM(\B_\V, E^\infty_\W) = \OM(\dot{\B}_\V, E^\infty_\W)$.
\end{corollary}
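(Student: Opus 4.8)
The plan is to prove the non-trivial inclusion $\OM(\dot{\B}_\V, E^\infty_\W) \subseteq \OM(\B_\V, E^\infty_\W)$; the reverse inclusion is immediate from $\dot{\B}_\V \subseteq \B_\V$ together with the defining formula for the multiplier space. I would then split the argument according to which of the two weight function systems carries the extra hypothesis $(\condwN)$, since the mechanism is different in the two cases. In both cases essentially all of the work has already been done: the structural characterization in Theorem~\ref{t:CharOM}, the identification in Corollary~\ref{c:MultSpDotToInfty}, and the invariance in Proposition~\ref{p:IndependenceSemiLoc} carry the load.

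Suppose first that $\W$ satisfies $(\condwN)$. Since $\W$ also satisfies $(\condwM)$, Proposition~\ref{p:IndependenceSemiLoc} yields $E^\infty_\W = (E_{\semloc})^\infty_\W$. Because $\V$ satisfies $(\condwM)$, Corollary~\ref{c:MultSpDotToInfty} applies and gives
\[
  \OM(\dot{\B}_\V, E^\infty_\W) = \OM(\B_\V, (E_{\semloc})^\infty_\W) = \OM(\B_\V, E^\infty_\W),
\]
which settles this case at once.

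Suppose instead that $\V$ satisfies $(\condwN)$. Here I would prove the stronger fact that $\B_\V = \dot{\B}_\V$ as sets, after which the two multiplier spaces coincide trivially (the spaces $\OM(\,\cdot\,, E^\infty_\W)$ depend on their first argument only as a set). Indeed, let $\varphi \in \B_\V$, fix $n \in \N$ and $\alpha \in \N^d$ with $|\alpha| \leq n$, and choose $m \geq n$ with $v_n/v_m \in L^0(\R^d)$; since $\varphi^{(\alpha)} v_m \in L^\infty(\R^d)$ and $0 < v_n/v_m \leq 1$, solidity of $L^0$ gives $\varphi^{(\alpha)} v_n = (\varphi^{(\alpha)} v_m)(v_n/v_m) \in L^0(\R^d)$, hence $\varphi \in \dot{\B}_\V$.

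There is no genuine obstacle here: the proof is short precisely because the substance is absorbed into the earlier results. The one point deserving attention is the second case, where the role of $(\condwN)$ for $\V$ is to force the multiplicand space to consist of functions "vanishing at infinity" in the weighted sense, so that enlarging $\dot{\B}_\V$ to $\B_\V$ costs nothing; and, throughout, one must be careful to check that the hypotheses of Proposition~\ref{p:IndependenceSemiLoc} and Corollary~\ref{c:MultSpDotToInfty} are met for the correct system among $\V$ and $\W$.
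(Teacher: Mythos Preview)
Your proof is correct and follows essentially the same approach as the paper: both split on which system satisfies $(\condwN)$, use Proposition~\ref{p:IndependenceSemiLoc} together with Corollary~\ref{c:MultSpDotToInfty} for the $\W$ case, and reduce the $\V$ case to $\B_\V = \dot{\B}_\V$. The only cosmetic difference is that for this last identity the paper invokes Lemma~\ref{l:MEBasicTIBF}$(ii)$ and Proposition~\ref{p:IndependenceSemiLoc} (applied with $E = L^\infty$), whereas you unpack the same argument by hand via the factorization $\varphi^{(\alpha)} v_n = (\varphi^{(\alpha)} v_m)(v_n/v_m)$.
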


\begin{proof}
	If $\V$ satisfies $(\condwN)$, then $\B_\V=(L^\infty)^\infty_\V=(L^0)^\infty_\V = \dot{\B}_\V$ by Lemma \ref{l:MEBasicTIBF}$(ii)$ and Proposition \ref{p:IndependenceSemiLoc}, and the identity follows directly.
	Otherwise, if $\W$ satisfies $(\condwN)$, then $(E_{\semloc})^\infty_\W = E^\infty_\W$ by Proposition \ref{p:IndependenceSemiLoc}, so that the identity is a consequence of Corollary \ref{c:MultSpDotToInfty}.
\end{proof}
We specialize by taking $\V = \W$.  
We conclude this section by characterizing when 
\[
\OM(\dot{\B}_\W, E^\infty_\W)
\] 
coincides exactly with the space of functions $f$ whose derivatives of all orders are dominated by some element of $\W$.  
This formulation recovers, and is closer in spirit to, the characterization that Schwartz gave for the multiplier space of $\S$ \cite{S-ThDist}.  
	
\begin{theorem}
\label{t:CharOMSingleW}
Let $\W$ be a weight function system satisfying $(\condwM)$.
Assume that
for every natural number $n$, there exists an $m \geq n$ such that
\begin{equation}\label{eq:SquarableWeight}
    \frac{w_n^2}{w_m} \in L^\infty.
\end{equation}
Then
	\begin{align}
		\label{eq:OMCondSameWeight}
		\OM(\dot{\B}_\W, E^\infty_\W)=
		\left\{ f \in C^\infty(\mathbb{R}^d) \,:\, \forall \alpha \in \N^d ~ \exists n \in \N \text{ such that } \frac{f^{(\alpha)}}{w_n} \in E_{\semloc} \right\}. 	
	\end{align}
In the case where $L^\infty \subseteq E_{\semloc}$, the converse is also true,
that is \eqref{eq:OMCondSameWeight} implies \eqref{eq:SquarableWeight}.
\end{theorem}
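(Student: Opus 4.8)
The plan is to prove the equality \eqref{eq:OMCondSameWeight} by combining Theorem~\ref{t:CharOM} (which characterizes $\OM(\dot{\B}_\W, E^\infty_\W)$ by the condition $f^{(\alpha)} w_n / v_m \in E_{\semloc}$, here with $\V = \W$) with the extra hypothesis \eqref{eq:SquarableWeight}. First I would fix $f \in \OM(\dot{\B}_\W, E^\infty_\W)$ and $\alpha \in \N^d$. Applying Theorem~\ref{t:CharOM} with the choice $n = 1$ (so $w_n = w_1 \geq 1$), there is $m \in \N$ with $f^{(\alpha)} w_1 / w_m \in E_{\semloc}$; since $w_1 \geq 1$ and $E$ is solid, this forces $f^{(\alpha)}/w_m \in E_{\semloc}$, giving the inclusion ``$\subseteq$'' in \eqref{eq:OMCondSameWeight}. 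For the reverse inclusion, take $f \in C^\infty(\R^d)$ and suppose for each $\alpha$ there is $n_\alpha$ with $f^{(\alpha)}/w_{n_\alpha} \in E_{\semloc}$. Fix $n \in \N$; by \eqref{eq:SquarableWeight} choose $m_1 \geq n$ with $w_n^2/w_{m_1} \in L^\infty$, and then, iterating over the finitely many $\beta$ with $|\beta| \le n$, set $N = \max_{|\beta|\le n} n_\beta$ and use $(\condwM)$ and monotonicity of $\W$ to find $m \geq m_1$ with $w_N \le C\, w_m / w_{m_1}$, hence $w_n^2 w_N / w_m^2 \in L^\infty$ — this is the bookkeeping step and the only mildly delicate part. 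One then writes, for $|\beta| \le n$,
\[
f^{(\beta)} \cdot \frac{w_n}{w_m} = \left( \frac{f^{(\beta)}}{w_N} \right) \cdot \frac{w_N\, w_n}{w_m},
\]
and since $w_N w_n / w_m \in L^\infty$ (by the choice above, as $w_n \le w_N \le w_m$ up to constants and $w_n^2 w_N/w_m^2 \in L^\infty$ combined with $w_n \le w_m$) while $f^{(\beta)}/w_N \in E_{\semloc}$ and $E_{\semloc}$ is an $L^\infty$-module by solidity, we get $f^{(\beta)} w_n / w_m \in E_{\semloc}$ for all $|\beta| \le n$; Theorem~\ref{t:CharOM} then yields $f \in \OM(\dot{\B}_\W, E^\infty_\W)$.

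For the converse direction under the hypothesis $L^\infty \subseteq E_{\semloc}$, the plan is to argue by contraposition: assume \eqref{eq:SquarableWeight} fails, i.e.\ there is $n$ such that $w_n^2/w_m \notin L^\infty$ for every $m \geq n$, and exhibit $f$ in the right-hand side of \eqref{eq:OMCondSameWeight} that violates the condition of Theorem~\ref{t:CharOM}. The natural candidate is $f = $ a smoothed version of $1/w_n$ (say $f = (1/w_n)_\chi$ via Lemma~\ref{l:SandwhichSmoothFunc}, after replacing $\W$ by an equivalent smooth system using Lemma~\ref{l:SmoothWeightFuncSyst}); by Lemma~\ref{l:SandwhichSmoothFunc}(i) all derivatives $f^{(\alpha)}$ are bounded by a constant times $1/w_n \le 1 \in L^\infty \subseteq E_{\semloc}$, so $f$ lies in the right-hand side of \eqref{eq:OMCondSameWeight}. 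On the other hand, $f \gtrsim 1/w_{n'}$ for a suitable $n' \geq n$ (again by Lemma~\ref{l:SandwhichSmoothFunc}(i)), so $f \cdot w_n/w_m \gtrsim w_n/(w_{n'} w_m)$; one must then show this is \emph{not} in $E_{\semloc}$ for any $m$. The main obstacle here is translating ``$w_n^2/w_m \notin L^\infty$'' into ``$w_n/(w_{n'}w_m) \notin E_{\semloc}$'': I would use that $E_{\semloc} \subseteq W(L^1,L^\infty)_{\semloc} = W(L^1,L^\infty)$ (Lemmas~\ref{l:InclusionWienerSpace}, \ref{l:MEBasicTIBF}(viii)), so it suffices to contradict membership in $W(L^1,L^\infty)$; since $w_n^2/w_m$ is unbounded, pick points $x_k$ with $w_n(x_k)^2/w_m(x_k) \to \infty$, and use $(\condwM)$ to control $w_n, w_{n'}, w_m$ on unit balls around $x_k$, producing local $L^1$-masses of $w_n/(w_{n'}w_m)$ near $x_k$ that blow up — contradicting the $W(L^1,L^\infty)$ bound. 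The delicate point is matching the indices $n'$ and $m$ so the blow-up survives; this is where I expect to spend the most care, possibly needing to first pass to the equivalent smooth weight system and invoke the derivative estimates of Lemma~\ref{l:SmoothWeightFuncSyst} to compare $w_{n'}$ with $w_n$ cleanly.

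I would organize the final write-up as: (1) reduce to smooth $\W$; (2) prove ``$\subseteq$'' in \eqref{eq:OMCondSameWeight} via Theorem~\ref{t:CharOM} with $n=1$; (3) prove ``$\supseteq$'' via the factorization above and \eqref{eq:SquarableWeight}; (4) prove the converse by the explicit counterexample $f \approx 1/w_n$ together with the chain $E_{\semloc} \subseteq W(L^1,L^\infty)$ and a pointwise blow-up argument. Steps (1)–(3) are essentially routine manipulations with Leibniz, solidity, and $(\condwM)$; the real work — and the part most likely to require an extra lemma or a careful choice of indices — is step (4), establishing the necessity of the squarability condition \eqref{eq:SquarableWeight}.
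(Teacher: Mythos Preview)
Your forward direction (steps (1)--(3)) is essentially correct and matches the paper's argument, though your bookkeeping in step (3) is more convoluted than necessary: rather than juggling $m_1$, $N$, and an appeal to $(\condwM)$ to get $w_N \le C\,w_m/w_{m_1}$ (which $(\condwM)$ does \emph{not} directly give --- it is a local shift condition, not a multiplicative one), simply apply \eqref{eq:SquarableWeight} at the index $\max(n,n_\alpha)$ to obtain $m$ with $w_{\max(n,n_\alpha)}^2/w_m \in L^\infty$, and then $w_n w_{n_\alpha}/w_m \le w_{\max(n,n_\alpha)}^2/w_m$. The paper does this per $\alpha$ and per $n \ge n_\alpha$ via the one-line estimate $|f^{(\alpha)}|\,w_n/w_{m_n} \le (w_n^2/w_{m_n})\cdot |f^{(\alpha)}|/w_{n_\alpha}$.

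The real problem is step (4). Your candidate $f \approx 1/w_n$ fails: with $f \le C/w_n$ and $f \ge c/w_{n'}$ for some $n' \ge n$, you get $f\cdot w_n/w_m \le C/w_m \le C$, so $f\,w_n/w_m \in L^\infty \subseteq E_{\semloc}$ for \emph{every} $m$, and likewise your lower bound $w_n/(w_{n'}w_m) \le 1$ is bounded. Hence $f$ \emph{does} satisfy the condition of Theorem~\ref{t:CharOM} and lies in $\OM(\dot{\B}_\W, E^\infty_\W)$; it is not a counterexample, and no blow-up argument in $W(L^1,L^\infty)$ can rescue this choice.

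The correct test function is $f = w_n$, not $1/w_n$, and the argument is direct rather than by contraposition. After passing to an equivalent smooth system (Lemma~\ref{l:SmoothWeightFuncSyst}), each $w_n$ satisfies $w_n^{(\alpha)}/w_k \in L^\infty \subseteq E_{\semloc}$ for suitable $k$, so $w_n$ lies in the right-hand side of \eqref{eq:OMCondSameWeight}. Assuming the equality \eqref{eq:OMCondSameWeight} holds, $w_n \in \OM(\dot{\B}_\W, E^\infty_\W)$, and Theorem~\ref{t:CharOM} applied with $\alpha = 0$ and the index $n$ itself yields $m$ with $w_n \cdot w_n/w_m = w_n^2/w_m \in E_{\semloc}$. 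To conclude that this lies in $L^\infty$, the paper first reduces to $E = L^\infty$ by showing (via Proposition~\ref{p:LowerUpperInclusions} and Corollary~\ref{c:MultSpDotToInfty}) that $\OM(\dot{\B}_\W, E^\infty_\W) = \OM(\dot{\B}_\W, \B_\W)$ under the hypothesis $L^\infty \subseteq E_{\semloc}$; you will need some such reduction as well, since $E_{\semloc}$ need not embed in $L^\infty$ in general.
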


\begin{proof}
	Let us first show that \eqref{eq:SquarableWeight} suffices for \eqref{eq:OMCondSameWeight} to hold.
	For any $n \in \N$, we let $m_n \in \N$ be such that $w^2_n / w_{m_n} \in L^\infty$.
	Suppose $f \in C^\infty(\mathbb{R}^d)$ belongs to the right-hand side of \eqref{eq:OMCondSameWeight}.
	Fix any $\alpha \in \N^d$ and let $n_{\alpha} \in \N$ be such that $f^{(\alpha)} / w_{n_{\alpha}} \in E_{\semloc}$.
	Then, for any $n \geq n_\alpha$, since
		\[ |f^{(\alpha)}| \frac{w_n}{w_{m_n}} \leq \frac{w_n^2}{w_{m_n}} \cdot |f^{(\alpha)}| \frac{1}{w_{n_{\alpha}}} , \]
	it follows from the solidity of $E_{\semloc}$ that $f^{(\alpha)} w_n / w_{m_n} \in E_{\semloc}$.
	By Theorem \ref{t:CharOM} we then see that $f \in \OM(\dot{\B}_\W, E^\infty_\W)$.
	Conversely, if $f \in \OM(\dot{\B}_\W, E^\infty_\W)$, it follows from Theorem \ref{t:CharOM} that for any $\alpha \in \N^d$ there exists some $m \in \N$ such that $f^{(\alpha)} w_0 / w_m \in E_{\semloc}$. Hence, as $w_0 \geq 1$, by the solidity of $E_{\semloc}$, we have $f^{(\alpha)} / w_m \in E_{\semloc}$.
Thus, $f$ belongs to the right-hand side of \eqref{eq:OMCondSameWeight}.
	
	Suppose now that $L^\infty \subseteq E_{\semloc}$ and that \eqref{eq:OMCondSameWeight} is true.
	Since
$\B_\W \subseteq (E_{\semloc})^\infty_\W$,
 the inclusion $\OM(\dot{\B}_\W, \B_\W) \subseteq \OM(\dot{\B}_\W, (E_{\semloc})^\infty_\W)$ holds.
	However, using Proposition \ref{p:LowerUpperInclusions} and Corollary \ref{c:MultSpDotToInfty}, we deduce
		\[ \OM(\dot{\B}_\W, E^\infty_\W) = \OM(\dot{\B}_\W, (E_{\semloc})^\infty_\W) \subseteq \OM(\dot{\B}_\W, \B_\W) . \]
	Therefore, $\OM(\dot{\B}_\W, \B_\W) = \OM(\dot{\B}_\W, E^\infty_\W)$, so that we may assume without loss of generality that $E = L^\infty$.
	
	By Lemma \ref{l:SmoothWeightFuncSyst} we may assume that each $w_n$
itself is smooth and that there exists an integer $m \geq n$ for which $w_n^{(\alpha)} / w_m\in L^\infty$ for all $\alpha \in \N^d$. 
	Then each $w_n$ is an element of the right-hand side of \eqref{eq:OMCondSameWeight}, so that they are also elements of $\OM(\dot{\B}_\W, \B_\W)$.
	Therefore, \eqref{eq:SquarableWeight} follows from Theorem \ref{t:CharOM}
	with $f=w_n$ and $\alpha=0$.
\end{proof}

\begin{remark}
%
$(i)$
Concerning $\D_E=E^\infty_{\bf 1}$,
one has $\OM(\dot{\B}, \D_E) = \OM(\B, \D_{E_{\semloc}}) = \D_{E_{\semloc}}$
due to Corollary \ref{c:MultSpDotToInfty} and Theorem \ref{t:CharOMSingleW}.

$(ii)$
Assume
that $1 / w_N \in L^0$ for some $N \in \N$
and that \eqref{eq:SquarableWeight} holds. For any $n \geq N$, we let $m \geq n$ be an integer such that $w_n^2 / w_m \in L^\infty$.
Then
	\[
0 \le \frac{w_n}{w_m} \le \frac{w_n^2}{w_m} \cdot \frac{1}{w_N} \in L^0 , \qquad n \geq N . \]
As a result, $\W$ satisfies $(\condwN)$.
Therefore, in this case, by Corollary \ref{c:MultSpEquivDotInfty}, $\OM(\B_\W, E^\infty_\W) = \OM(\dot{\B}_\W, E^\infty_\W)$.
\end{remark}

\section{Characterizations via mollification}
\label{s6}

In this final section, we consider how to detect whether a distribution belongs to any of the spaces $E^\infty_\W$, $\OC^\prime(\D, E^\infty_\W)$, or $\OM(\dot{\B}_\V, E^\infty_\W)$, by looking at its mollifications.
When dealing with a single window, such characterizations for $E^\infty_\W$ are often possible.

\begin{proposition}\label{p:250408-1}
Assume that $(\condwM)$ is satisfied.
	Let $\phi \in \D$ with $\int \phi \neq 0$.
	Suppose $E$ is a closed subspace of $E_{\semloc}$.
	For any $f \in \D^\prime$ it holds: $f \in E^\infty_\W$ if and only if $\{f * \phi_j \,:\, j \in \N\}$ is bounded in $E^\infty_\W$.
\end{proposition}

\begin{proof}
	If $f \in E^\infty_\W$, then $\{f * \phi_j \,:\, j \in \N\}$ is bounded in $E^\infty_\W$ by Lemma \ref{l:ConvGenOmega}.
	We now show the converse.

	As $E^\infty_\W \subseteq C^\infty(\R^d)$ continuously, the set $B = \{f * \phi_j \,:\, j \in \N\}$ is also bounded in $C^\infty(\R^d)$.
	As the latter space is Montel, the set $B$ is relatively compact there. 
	Consequently, there exists a subsequence $(j_k)_{k \in \N}$ of $\N$ such that $f * \phi_{j_k}$ converges to some $g \in C^\infty(\R^d)$.
	But, by \eqref{eq:250421-12}, we must necessarily have that $g = (\int \phi) f$. 
	Consequently, $f \in C^\infty(\R^d)$. \\
	\indent
	We first consider the maximal case: $E = E_{\semloc}$.
    	We will show that $f w_n \in E_{\semloc}$ for any $n \in \N$.
    	Since $\{ f^{(\alpha)} * \phi_j \,:\, j \in \N\}$ is also bounded in $(E_{\semloc})^\infty_\W$ for any $\alpha \in \N^d$, this would complete the proof for $E = E_{\semloc}$.
    	Put $M = \sup_{j \in \N} \| f * \phi_j \|_{(E_{\semloc})_{w_n}} < \infty$.
    	Fix some $K \Subset \R^d$ and let $\chi \in \D$ be such that $\chi \equiv 1$ on $K$ and for which $0 \leq \chi \leq 1$.
	By Lemma \ref{l:SmoothWeightFuncSyst}, we may assume each $w_n$ is continuous.
	Then $\chi \cdot [f * \phi_j] \cdot w_n \to \chi \cdot f \cdot w_n$ in $C_{\rm c}(\R^d)$, for any $n \in \N$, hence also in $E$.
	Consequently, using the solidity of $E$, we have that for any $n \in \N$,
		\[ \| 1_{K} f \|_{E_{w_n}} \leq \| \chi \cdot f \cdot w_n \|_E = \lim_{j \to \infty} \| \chi \cdot [f * \phi_j] \cdot w_n \|_E \leq \sup_{j \in \N} \| f * \phi_j \|_{(E_{\semloc})_{w_n}} \leq M . \]
	As $K$ was chosen arbitrarily, we conclude that $f \in E_{\semloc}$.
	
	Suppose now $E$ is a strict closed subspace of $E_{\semloc}$.
	Note that by our previous argument, we already have $f  \in (E_{\semloc})^\infty_\W$.
	Take any $\varphi \in C_{\rm c}(\R^d)$ and $\alpha \in \N^d$.
	Then, $f^{(\alpha)} * \phi_j * \varphi \in E^\infty_\W$ for any $j \in \N$ by Proposition \ref{p:EWConvCompact} and our assumptions.
	Since $\phi_j * \varphi \to \varphi$ in $\D$, another application of Proposition \ref{p:EWConvCompact} gives that $f^{(\alpha)} * \phi_j * \varphi \to f^{(\alpha)} * \varphi$ in $(E_{\semloc})^\infty_\W$.
	In particular, as $E$ is closed in $E_{\semloc}$, it follows that $f^{(\alpha)} * \varphi \in E^\infty_\W$.
	By \eqref{eq:parametrix_formula}, as $\varphi$ and $\alpha$ were chosen arbitrarily, we may conclude $f \in E^\infty_\W$.
\end{proof}

%
%

The question of when $E$ is closed in $E_{\semloc}$ is considered in \cite{B-F-BanachSpDistTwoMod}.
In particular, it is true when $E = E_{\semloc}$ or, by Proposition \ref{p:E=MEiffSemiLocal}, when $E = E_{\approxim}$.
Consequently, the characterization in Proposition \ref{p:250408-1} holds if
	\begin{itemize}
		\item[$\bullet$] $E = E_{\approxim}$ or $E = E_{\semloc}$. In particular, if $E$ has an absolutely continuous norm or satisfies the Fatou property, see Lemma \ref{l:ECoincidesWithDerivSp};
		\item[$\bullet$] $\W$ satisfies $(\condwN)$, by Proposition~\ref{p:IndependenceSemiLoc}.
	\end{itemize}
In case where $E$ is not a closed subspace of $E_{\semloc}$, we do not know whether the characterization
as in Propsoition \ref{p:250408-1} still holds.

The situation becomes significantly more flexible when mollifying with at least two windows.
As we will see in Theorem \ref{t:KWDecomp}, by utilizing two well-chosen functions $\chi^0$ and $\chi^1$, it can already suffice for
	\[ \sup_{|\alpha| \leq n} \| \partial^\alpha [f * \chi_j^\ell] \|_E = O(2^{rj}) , \qquad n \in \N, ~ \ell = 0, 1 , \]
for some $r > 0$, to conclude $f$ is in $E^\infty_\W$.
Essentially, the coefficient $r$ allows us to consider mollifications with derivatives of the windows $\chi^\ell$.
This amounts to the ability to cancel a finite number of moments of the window, that is, to dampen out low frequencies of the distribution.
Note that in \eqref{eq:250421-12}, we would have convergence to zero, hinting at the need for two windows.

%

We now proceed to make everything concrete.
We will start by introducing specific pairs of windows that will allow our results as mentioned above.

\subsection{Moment-wise decomposition factorization property}

We consider the functional equation for functions $\varphi, \psi \in \mathcal{D}$:
\begin{equation}\label{eq:DecompTestFuncCond}
2^d \varphi(2 \cdot) - \varphi = \Delta \psi.
\end{equation}
It is known that nontrivial solutions exist, as established by Schott \cite{S-FuncSpExpWeighI}.  
Any pair $(\varphi, \psi) \in \D \times \D$ satisfying \eqref{eq:DecompTestFuncCond} can be used to decompose distributions into smooth functions.

\begin{lemma}
	\label{l:D'Decomp}
	Let $\varphi, \psi \in \D \setminus \{0\}$ be such that \eqref{eq:DecompTestFuncCond} holds.
Assume further
\begin{equation}\label{eq:250408-71}
\int_{{\mathbb R}^d}\varphi(x){\rm d}x=1.
\end{equation}
 Then, for any $f \in \D^\prime$,
		\begin{equation}
			\label{eq:D'Decomp} 
			f = f * \varphi + \sum_{j \in \N} 4^{-j} [(\Delta f) * \psi_j]  . 
		\end{equation}
\end{lemma}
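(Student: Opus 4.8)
The plan is to establish the telescoping identity \eqref{eq:D'Decomp} by iterating the functional equation \eqref{eq:DecompTestFuncCond} at the level of scalings and then passing to the limit using the basic convergence fact \eqref{eq:250421-12}. First I would record the scaled version of \eqref{eq:DecompTestFuncCond}: applying the dilation $x \mapsto 2^j x$ and the normalization $\varphi_j = 2^{jd}\varphi(2^j\cdot)$, one checks that
\[
\varphi_{j+1} - \varphi_j = \Delta(4^{-j}\psi_j),
\]
where $\psi_j = 2^{jd}\psi(2^j\cdot)$ and the factor $4^{-j}$ comes from the two derivatives in $\Delta$ scaling by $2^{-2j}$ against the $2^{jd}$ in the mollifier normalization; $\varphi_0 = \varphi$ by convention. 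Convolving with $f \in \D'$ and using that $\Delta$ commutes with convolution, this gives
\[
f * \varphi_{j+1} - f * \varphi_j = 4^{-j}\,(\Delta f) * \psi_j, \qquad j \in \N.
\]

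Next I would sum this telescoping relation from $j = 0$ to $N-1$, obtaining
\[
f * \varphi_N = f * \varphi + \sum_{j=0}^{N-1} 4^{-j}\,(\Delta f) * \psi_j.
\]
Now I let $N \to \infty$. On the left-hand side, by \eqref{eq:250421-12} together with the normalization \eqref{eq:250408-71} (so $\int\varphi = 1$), we have $f * \varphi_N \to f$ in $\D'$. On the right-hand side, the partial sums therefore converge in $\D'$, and the limit is exactly the series $\sum_{j\in\N} 4^{-j}(\Delta f)*\psi_j$; hence \eqref{eq:D'Decomp} holds with convergence in $\D'$. One should remark that each term $f*\varphi_j$ and $(\Delta f)*\psi_j$ is a smooth function (convolution of a distribution with a test function), so the identity genuinely decomposes $f$ into smooth pieces.

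The main point requiring care — and the only real obstacle — is the verification of the scaled functional equation, i.e. tracking the dilation constants so that the $\Delta$ on the right produces precisely the coefficient $4^{-j}$ and the mollifier $\psi_j$ in the normalized convention $\psi_j = 2^{jd}\psi(2^j\cdot)$. Concretely, from \eqref{eq:DecompTestFuncCond} one substitutes $2^j x$ for $x$: the left side $2^d\varphi(2^{j+1}x) - \varphi(2^j x)$ becomes $2^{-jd}(\varphi_{j+1}(x) - \varphi_j(x))$, while the right side $\Delta\psi$ evaluated at $2^j x$ equals $2^{-2j}(\Delta_x)[\psi(2^j x)] = 2^{-2j-jd}\Delta\psi_j(x)$; multiplying through by $2^{jd}$ and noting $2^{-2j} = 4^{-j}$ yields the claimed relation. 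Everything else is the standard telescoping argument plus the already-available convergence \eqref{eq:250421-12}; no hypotheses on $E$ or $\W$ enter, since this is a purely distributional statement.
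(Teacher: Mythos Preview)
Your proof is correct and follows essentially the same approach as the paper: both derive the scaled identity $\varphi_{j+1}-\varphi_j = 4^{-j}\Delta\psi_j$ from \eqref{eq:DecompTestFuncCond}, telescope, and use the convergence $\varphi_N \to \delta$ (equivalently \eqref{eq:250421-12} with \eqref{eq:250408-71}). The only cosmetic difference is that the paper carries out the telescoping at the level of $\delta = \varphi + \sum_j 4^{-j}\Delta\psi_j$ in $\D'$ and then convolves with $f$, whereas you convolve first and then telescope; this is an inessential reordering.
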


\begin{proof}
	For any $j \in \N$, we have
		\[ 2^{(j + 1)d} \varphi(2^{j + 1} \cdot \, ) - 2^{j d} \varphi(2^j \cdot \, ) = 2^{j(d - 2)} \Delta (\psi(2^j \cdot \, )) . \]
Assuming (\ref{eq:250408-71}) holds,
we see that
		\[ \delta = \varphi + \sum_{j \in \N}( 2^{(j + 1)d} \varphi(2^{j + 1} \cdot \, ) - 2^{j d} \varphi(2^j \cdot \, ) )= \varphi + \sum_{j \in \N} 4^{-j} [2^{jd} \Delta(\psi(2^j \cdot \, ))] , \]
	where the sum converges in $\D^\prime$. As $f = f * \delta$ for any $f \in \D^\prime$, the result follows.
\end{proof}

We now introduce the following factorization property for pairs of test functions.

\begin{definition}
\label{def:MDFP}
We say that $(\chi^0, \chi^1) \in \D \times \D$ has the \emph{moment-wise decomposition factorization property} 
$($MDFP$)$ if for some $K \Subset \mathbb{R}^d$ there exist
non-trivial functions $(\varphi_L^\ell, \psi_L^\ell) \in \mathcal{D}(K) \times \mathcal{D}_L(K)$, 
$\ell \in \{0, 1\}$ and  $L \in \N$, such that
$\varphi^0_L, \psi^0_L$ satisfy
\begin{equation}\label{eq:DecompTestFuncCond2}
2^d \varphi^0_L(2 \cdot) - \varphi^0_L = \Delta \psi^0_L,
\end{equation}
\begin{equation}\label{eq:250408-712}
\int_{{\mathbb R}^d}\varphi^0_L(x){\rm d}x=1 ,
\end{equation}
and
\begin{equation}\label{eq:MDFPFact}
\varphi^0_L = \chi^0 * \varphi^0_L + \chi^1 * \varphi^1_L, \quad \psi^0_L = \chi^0 * \psi^0_L + \chi^1 * \psi^1_L.
\end{equation}
\end{definition}

Note that if $(\chi^0, \chi^1) \in \D \times \D$ satisfies the MDFP, then \eqref{eq:IntroDecomp} follows immediately for every $L \in \N$ from \eqref{eq:D'Decomp} and \eqref{eq:MDFPFact} by taking $\varphi^\ell = \varphi^\ell_L$ and $\psi^\ell = \psi^\ell_L$ for $\ell \in \{0, 1\}$.

The main goal of this subsection is to show that there do exist $(\chi^0, \chi^1)$ satisfying the MDFP.
\begin{proposition}\label{p:ExistMDPF}
There exist $(\chi^0, \chi^1) \in \mathcal{D} \times \mathcal{D}$ satisfying the MDFP.
\end{proposition}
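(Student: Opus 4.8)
The plan is to construct the pair $(\chi^0,\chi^1)$ on the Fourier side, where the functional equation \eqref{eq:DecompTestFuncCond2} and the factorization \eqref{eq:MDFPFact} become multiplicative. Writing $m(\xi)=\widehat{\varphi^0_L}(\xi)$, equation \eqref{eq:DecompTestFuncCond2} reads $m(\xi/2)-m(\xi)=|\xi|^2\,\widehat{\psi^0_L}(\xi)$, so the natural choice is to start from a single radial function $\Theta\in\D(\overline B(0,1))$ with $\int\Theta=1$ and $\widehat\Theta$ not vanishing near the origin, set $m=\widehat\Theta$, and then $\widehat{\psi^0_L}(\xi)=(m(\xi/2)-m(\xi))/|\xi|^2$. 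For this quotient to be (the Fourier transform of) a function with vanishing moments up to order $L$, one needs $m(\xi/2)-m(\xi)$ to vanish to order $L+2$ at the origin; this is arranged by subtracting from $\widehat\Theta$ its Taylor polynomial, i.e. replacing $\widehat\Theta$ by a modified entire function whose first several derivatives at $0$ are prescribed, which is exactly the kind of flexibility Braun--Meise--Taylor ultradifferentiable classes \cite{B-M-T-UltradiffFuncFourierAnal} provide while keeping compact support of the inverse transform. The factorization \eqref{eq:MDFPFact} is then obtained by choosing $\widehat{\chi^0}+\widehat{\chi^1}\cdot(\widehat{\varphi^1_L}/\widehat{\varphi^0_L})=1$ and simultaneously $\widehat{\chi^0}+\widehat{\chi^1}\cdot(\widehat{\psi^1_L}/\widehat{\psi^0_L})=1$; subtracting, one needs $\widehat{\chi^1}$ to kill the difference $\widehat{\varphi^1_L}/\widehat{\varphi^0_L}-\widehat{\psi^1_L}/\widehat{\psi^0_L}$, and the point is that $\chi^0,\chi^1$ must be chosen \emph{independently of $L$}, so the construction of $\varphi^1_L,\psi^1_L$ has to be reverse-engineered from a fixed pair.

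Concretely I would proceed as follows. First, fix once and for all a radial $\chi^0\in\D$ with $\widehat{\chi^0}\equiv 1$ on a neighborhood of the origin and a radial $\chi^1\in\D$ with $\widehat{\chi^1}\equiv 1$ on a (larger) neighborhood of the origin; the rough idea is that $\chi^1$ will absorb the low-frequency mass and $\chi^0$ supplies a ``partition of unity'' type relation $\widehat{\varphi^0_L}=\widehat{\chi^0}\,\widehat{\varphi^0_L}+(1-\widehat{\chi^0})\,\widehat{\varphi^0_L}$, so that one sets $\widehat{\varphi^1_L}:=\widehat{\chi^1}^{-1}(1-\widehat{\chi^0})\,\widehat{\varphi^0_L}$ on the support of $1-\widehat{\chi^0}$ — which is well away from the origin, where $\widehat{\chi^1}=1$ — and this is consistent with \eqref{eq:MDFPFact} provided $\widehat{\chi^1}$ is invertible on that region, i.e. $\chi^1$ is chosen with $\widehat{\chi^1}$ nonvanishing there. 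Second, define $\widehat{\varphi^0_L}$ as the Fourier transform of a compactly supported function obtained via a Braun--Meise--Taylor cutoff whose Taylor expansion at $0$ agrees with that of a fixed bump to order $L+2$, guaranteeing both \eqref{eq:250408-712} and that $\widehat{\psi^0_L}(\xi)=(\widehat{\varphi^0_L}(\xi/2)-\widehat{\varphi^0_L}(\xi))/|\xi|^2$ extends to an entire function vanishing to order $L$ at $0$, hence $\psi^0_L\in\D_L(K)$. Third, run the identical splitting for $\psi^0_L$ to define $\psi^1_L$, and verify that $\varphi^1_L,\psi^1_L\in\D(K)$ (resp.\ $\D_L(K)$) by Paley--Wiener, checking that dividing by $\widehat{\chi^1}$ does not enlarge supports because it only happens on a region where $\widehat{\chi^1}\equiv 1$. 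Finally, record that $\varphi^\ell_L,\psi^\ell_L$ are nontrivial (their Fourier transforms do not vanish identically) and that \eqref{eq:DecompTestFuncCond2}, \eqref{eq:250408-712}, \eqref{eq:MDFPFact} all hold by construction, with a single compact $K$ independent of $L$ since all the modified bumps can be taken supported in a fixed ball by a uniform bound on the Braun--Meise--Taylor construction.

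The main obstacle, as I see it, is the tension between two requirements: the windows $\chi^0,\chi^1$ are fixed, whereas $\varphi^1_L,\psi^1_L$ must vanish to arbitrarily high order $L$ at the origin on the Fourier side, and yet \eqref{eq:MDFPFact} forces $\widehat{\chi^1}\,\widehat{\varphi^1_L}=\widehat{\varphi^0_L}-\widehat{\chi^0}\,\widehat{\varphi^0_L}$, so one needs $(1-\widehat{\chi^0})\,\widehat{\varphi^0_L}$ to already vanish to high order near $0$ — this is why $\chi^0$ must be chosen with $\widehat{\chi^0}\equiv 1$ on a full neighborhood of the origin, so that the left side is \emph{identically zero} there regardless of $L$, and the high-order vanishing of $\psi^\ell_L$ is carried entirely by the construction of $\varphi^0_L$ and the factor $|\xi|^{-2}$. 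Making all supports land inside one fixed $K$ uniformly in $L$ is where the Braun--Meise--Taylor machinery does the real work: one needs a weight-sequence estimate ensuring that subtracting higher and higher Taylor polynomials (equivalently, multiplying by a function vanishing to order $L+2$) can be realized within the same ultradifferentiable class on a fixed ball, and I expect the bulk of the technical effort — and the place where the hypotheses of \cite{B-M-T-UltradiffFuncFourierAnal} get invoked — to be precisely this uniform support control together with the Paley--Wiener verification that the inverse transforms are genuinely in $\D(K)$.
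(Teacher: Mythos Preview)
Your approach has a fatal obstruction at the very first step. You propose to fix $\chi^0\in\D$ with $\widehat{\chi^0}\equiv 1$ on a neighborhood of the origin, and likewise for $\chi^1$. But by the Paley--Wiener theorem the Fourier transform of any compactly supported distribution is entire on $\C^d$, and an entire function that equals $1$ on an open set is identically $1$; this would force $\chi^0$ to be (a multiple of) the Dirac delta, not an element of $\D$. The same applies to $\chi^1$, so neither the splitting $\widehat{\varphi^1_L}=\widehat{\chi^1}^{-1}(1-\widehat{\chi^0})\widehat{\varphi^0_L}$ nor the subsequent division by $\widehat{\chi^1}$ on a region where it ``equals $1$'' can be carried out with $\chi^0,\chi^1\in\D$. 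Your whole mechanism for achieving \eqref{eq:MDFPFact} independently of $L$ rests on this impossible localization. (A minor additional point: the MDFP only requires $\psi^\ell_L\in\D_L(K)$; there is no moment condition on $\varphi^1_L$, so the ``tension'' you describe is somewhat overstated, though this does not repair the main gap.)

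The paper circumvents the Paley--Wiener obstruction by never attempting to localize on the Fourier side. Instead it uses an \emph{ultradifferentiable parametrix}: a cited result of G\'omez-Collado furnishes $\chi_0,\chi_1\in\D$ and an ultradifferential operator $G(D)$ of some Braun--Meise--Taylor class $(\omega)$ with $\chi_0+G(D)\chi_1=\delta$. Convolving with any $\phi\in\D^{(\omega)}(K)$ gives $\phi=\chi_0*\phi+\chi_1*[G(D)\phi]$ automatically, and since $G(D)$ is a local (infinite-order differential) operator it preserves both the support $K$ and the moment conditions. The problem thus reduces to constructing, for each $L$, a single pair $(\varphi_L,\psi_L)\in\D^{(\omega)}(K)\times\D^{(\omega)}_L(K)$ satisfying \eqref{eq:DecompTestFuncCond2}--\eqref{eq:250408-712}; this is done by an explicit radial construction (adapted from Schott) that stays within the ultradifferentiable class, after which one simply sets $\varphi^1_L=G(D)\varphi_L$ and $\psi^1_L=G(D)\psi_L$. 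In short, the infinite-order operator $G(D)$ plays exactly the role you wanted $\widehat{\chi^1}^{-1}(1-\widehat{\chi^0})$ to play, but it acts on the spatial side and preserves compact support because the functions it acts on are ultradifferentiable --- this is where the Braun--Meise--Taylor machinery genuinely enters, not in controlling Taylor remainders.
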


To establish Proposition \ref{p:ExistMDPF}, we provide a sufficient condition for the MDFP. This is achieved through the theory of ultradifferentiable functions in the sense of Braun-Meise-Taylor \cite{B-M-T-UltradiffFuncFourierAnal}. We begin by recalling several key definitions and function spaces.

\begin{definition}
    A \emph{Braun-Meise-Taylor weight function} (abbreviated to \emph{BMT-weight function}) \cite{B-M-T-UltradiffFuncFourierAnal} is a continuous, non-decreasing function $\omega : [0, \infty) \to [0, \infty)$ with $\omega(0) = 0$ satisfying the following conditions:
    \begin{itemize}
        \item[$(\alpha)$] There exists a constant $C > 0$ such that $\omega(2t) \leq C \omega(t)$ for all $t \geq 0$.
        \item[$(\beta)$] The integral $\int_1^\infty \frac{\omega(t)}{1 + t^2} dt$ converges.
        \item[$(\delta)$] The function $\varphi : [0, \infty) \to [0, \infty)$ defined by $\varphi(t) := \omega(e^t)$,
$t \geq 0$, is convex.
    \end{itemize}
We write $\varphi^*$
for    the \emph{Young conjugate} of $\varphi$
and
it is given by
    \[
        \varphi^*(x) := \sup_{y \geq 0} \big( xy - \varphi(y) \big) \quad (x \ge0).
    \]
The function $\varphi^*$ is convex and increasing, $(\varphi^*)^* = \varphi$, and the function $y \mapsto \varphi^*(y) / y$ is increasing on $[0, \infty)$ and tends to infinity as $y \to \infty$.
\end{definition}

\begin{definition}
    Let $\omega$ be a BMT-weight function. For any $h > 0$ and any compact set $K \Subset \mathbb{R}^d$, we define the Banach space
    \[
        \mathcal{D}^{\omega, h}(K) := \left\{ f \in \mathcal{D}(K) \,:\, \sup_{\alpha \in \mathbb{N}^d} \| f^{(\alpha)} \|_{L^{\infty}} e^{- \frac{1}{h} \varphi^*(h |\alpha|)} < \infty \right\}.
    \]
    The associated Fr\'echet space is given by
    \[
        \mathcal{D}^{(\omega)}(K) := \bigcap_{h > 0} \mathcal{D}^{\omega, h}(K).
    \]
    If $K$ has a non-empty interior, then $\mathcal{D}^{(\omega)}(K)$ is non-trivial \cite[Remark 3.2(1)]{B-M-T-UltradiffFuncFourierAnal}. For any $L \in \mathbb{N}$, we further define
    \[
        \mathcal{D}^{(\omega)}_L(K) := \mathcal{D}^{(\omega)}(K) \cap \mathcal{D}_L(K).
    \]
\end{definition}

\begin{definition}{\rm \cite{G-AlmostPeriodicUltradistBeurlingRoumieu}}
    Let $\omega$ be a BMT-weight function. An entire function $G$ on $\mathbb{C}^d$ satisfying \[ \log |G(z)| ={\rm O}(\omega(|z|)) \quad \text{as } |z| \to \infty \] defines, for any compact set $K \Subset \mathbb{R}^d$, a continuous linear operator
    \[
        G(D) : \mathcal{D}^{(\omega)}(K) \to \mathcal{D}^{(\omega)}(K), \quad f \mapsto \sum_{\alpha \in \mathbb{N}^d} (-i)^{|\alpha|} \frac{G^{(\alpha)}(0)}{\alpha!} f^{(\alpha)}.
    \]
    We refer to this as an \emph{ultradifferentiable operator of class $(\omega)$}.
\end{definition}

We present the following sufficient condition for the MDFP.
Remark that the existence of such $\chi_0$ and $\chi_1$
is guaranteed
by
\cite[Corollary 2.6]{G-AlmostPeriodicUltradistBeurlingRoumieu}.
\begin{proposition}
    \label{p:SuffCondDFP}
    Let $\chi_0, \chi_1 \in \mathcal{D}$, let $\omega$ be a BMT-weight function, and let $G$ be an ultradifferentiable operator of class $(\omega)$. Assume that
    \[ \chi_0 + G(D) \chi_1 = \delta. \]
    Then the pair $(\chi_0, \chi_1)$ satisfies the MDFP.
\end{proposition}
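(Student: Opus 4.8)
The goal is to produce, from the single identity $\chi_0 + G(D)\chi_1 = \delta$, the full MDFP data: for each $L$, a pair $(\varphi^0_L,\psi^0_L)\in\mathcal D(K)\times\mathcal D_L(K)$ satisfying the decomposition equation \eqref{eq:DecompTestFuncCond2} and normalization \eqref{eq:250408-712}, plus the factorizations \eqref{eq:MDFPFact}. The plan is to \emph{first} manufacture a primitive pair $(\varphi^0_L,\psi^0_L)$ of the ``wavelet-type'' equation $2^d\varphi^0_L(2\cdot)-\varphi^0_L=\Delta\psi^0_L$ inside the ultradifferentiable class $\mathcal D^{(\omega)}(K)$, with $\psi^0_L$ having vanishing moments up to order $L$, and \emph{then} apply the operator $\chi_0 + G(D)\chi_1$ (which acts as the identity $\delta$) to it and regroup, using that $G(D)$ is an ultradifferentiable operator of class $(\omega)$ mapping $\mathcal D^{(\omega)}(K)$ to itself continuously, so that $\chi^0*\varphi = \chi_0*\varphi$ and $\chi^1*\varphi = \chi_1$ convolved appropriately — wait, more precisely, from $\varphi = \delta*\varphi = \chi_0*\varphi + G(D)(\chi_1*\varphi) = \chi_0*\varphi + \chi_1*G(D)\varphi$, so one sets $\varphi^1_L := G(D)\varphi^0_L$ and $\psi^1_L := G(D)\psi^0_L$. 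Since $G(D)$ commutes with translations and dilations and preserves $\mathcal D(K)$ (after a possible harmless enlargement of $K$, but $G(D)$ actually preserves support), and since $G(D)$ preserves vanishing moments up to order $L$ when its symbol $G$ — hmm, this needs care: $G(D)$ does not in general preserve vanishing moments. So the construction must be arranged so that $\psi^0_L$ itself is chosen in $\mathcal D_L(K)$ and one only needs $\psi^1_L = G(D)\psi^0_L$ to lie in $\mathcal D(K)$, not $\mathcal D_L(K)$ — re-reading Definition~\ref{def:MDFP}, indeed only $\psi^0_L$ (and $\psi^1_L$?) — Definition requires $(\varphi^\ell_L,\psi^\ell_L)\in\mathcal D(K)\times\mathcal D_L(K)$ for \emph{both} $\ell$. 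So I do need $G(D)\psi^0_L$ to have vanishing moments up to $L$ as well.

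**Resolving the moment issue.** The way around this: note $\widehat{G(D)\psi^0_L}(\xi) = G(\xi)\widehat{\psi^0_L}(\xi)$, and $\psi^0_L\in\mathcal D_L(K)$ means $\widehat{\psi^0_L}$ vanishes to order $L$ at $0$; then $G(\xi)\widehat{\psi^0_L}(\xi)$ still vanishes to order $L$ at $0$ regardless of $G(0)$, because the zero of $\widehat{\psi^0_L}$ is not destroyed by multiplication by the entire function $G$. Hence $G(D)\psi^0_L\in\mathcal D_L$ automatically, and support is preserved by $G(D)$ (it is a convolution operator with a distribution supported at the origin, or rather the limit of such; one cites the mapping property $G(D):\mathcal D^{(\omega)}(K)\to\mathcal D^{(\omega)}(K)$ stated in the excerpt). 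Similarly $\varphi^1_L := G(D)\varphi^0_L\in\mathcal D(K)$, and it is non-trivial provided $G\not\equiv 0$ and $\varphi^0_L\not\equiv 0$. Applying $G(D)$ to \eqref{eq:DecompTestFuncCond2} shows $(\varphi^1_L,\psi^1_L)$ also satisfy the same wavelet equation, which is more than we need.

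**Constructing the primitive pair.** The remaining, and I expect \emph{main}, obstacle is producing $(\varphi^0_L,\psi^0_L)\in\mathcal D^{(\omega)}(K)\times\mathcal D^{(\omega)}_L(K)$ with $2^d\varphi^0_L(2\cdot)-\varphi^0_L=\Delta\psi^0_L$, $\int\varphi^0_L = 1$, and $\psi^0_L$ with $L$ vanishing moments. On the Fourier side, writing $\Phi=\widehat{\varphi^0_L}$, $\Psi=\widehat{\psi^0_L}$, the equation becomes $\Phi(\xi/2)-\Phi(\xi) = -|\xi|^2\Psi(\xi)$, i.e. one must choose $\Phi$ with $\Phi(0)=(2\pi)^{-d/2}$ such that $\Phi(\xi/2)-\Phi(\xi)$ vanishes to order $\ge L+2$ at the origin (so that dividing by $-|\xi|^2$ leaves something vanishing to order $\ge L$) and such that $\Phi$ is the Fourier transform of a compactly supported function in the class $\mathcal D^{(\omega)}(K)$; Paley–Wiener–Schwartz for the Braun–Meise–Taylor classes characterizes such $\Phi$ by their growth $|\Phi(\zeta)|\le C\exp(H_K(\operatorname{Im}\zeta) - \lambda\,\omega(|\zeta|))$. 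A clean way: take $\varphi^0_L := \theta^{*M}$, an $M$-fold convolution of a fixed non-negative bump $\theta\in\mathcal D^{(\omega)}$ with $\int\theta=1$, and verify that for $M$ large the difference $\Phi(\xi/2)-\Phi(\xi) = \theta\widehat{\ }(\xi/2)^M - \theta\widehat{\ }(\xi)^M$ — this does not obviously vanish to high order. Better: realize $\psi^0_L$ directly. Since the construction is essentially the one behind Lemma~\ref{l:D'Decomp} (the telescoping Littlewood–Paley identity), I would pick a radial $g\in\mathcal D^{(\omega)}_{L}(K)$ (vanishing moments are easy to arrange by subtracting a suitable finite-rank correction supported in $K$, or by taking $g=\Delta^{\lceil (L+1)/2\rceil} h$ for $h\in\mathcal D^{(\omega)}(K)$) and then \emph{define} $\psi^0_L$ and $\varphi^0_L$ by solving the difference equation via the series $\varphi^0_L := \sum_{k\ge 0}\bigl(G_k - G_{k+1}\bigr)$-type telescoping, or more simply invoke \cite[Remark 3.2(1)]{B-M-T-UltradiffFuncFourierAnal} together with a standard construction of a ``Littlewood–Paley pair'' within a non-quasianalytic class, which is available precisely because $\omega$ satisfies $(\beta)$. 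I would then check: (i) $\varphi^0_L,\psi^0_L\in\mathcal D^{(\omega)}(K)$, hence in $\mathcal D(K)$; (ii) $\psi^0_L\in\mathcal D_L$; (iii) \eqref{eq:DecompTestFuncCond2} and \eqref{eq:250408-712}; then set $\varphi^1_L := G(D)\varphi^0_L$, $\psi^1_L := G(D)\psi^0_L$, verify non-triviality and the factorizations \eqref{eq:MDFPFact} from $\delta = \chi_0 + G(D)\chi_1$ by convolving this identity against $\varphi^0_L$ and $\psi^0_L$ and using that $G(D)$ commutes with convolution. The hard part is item (i)–(iii): exhibiting the Littlewood–Paley pair \emph{inside} the Braun–Meise–Taylor class with prescribed vanishing moments; everything after that is formal bookkeeping with the identity $G(D)\delta$-calculus.
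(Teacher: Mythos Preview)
Your overall strategy is correct and essentially identical to the paper's: reduce the MDFP to the existence, for each $L$, of a primitive pair $(\varphi_L,\psi_L)\in\mathcal D^{(\omega)}(K)\times\mathcal D^{(\omega)}_L(K)$ satisfying \eqref{eq:DecompTestFuncCond2} and \eqref{eq:250408-712}, then set $(\varphi^0_L,\psi^0_L):=(\varphi_L,\psi_L)$ and $(\varphi^1_L,\psi^1_L):=(G(D)\varphi_L,G(D)\psi_L)$, and read off \eqref{eq:MDFPFact} by convolving $\delta=\chi_0+G(D)\chi_1$ against $\varphi_L$ and $\psi_L$. Your Fourier-side argument that $G(D)$ preserves $\mathcal D_L$ is correct and is exactly what the paper uses (though the paper states it without justification).

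The genuine gap is the construction of the primitive pair itself. Your suggestions here do not close: the $M$-fold convolution you rightly discard; the ansatz $\psi^0_L=\Delta^{\lceil(L+1)/2\rceil}h$ gives vanishing moments but leaves you to \emph{solve} the functional equation $2^d\varphi(2\cdot)-\varphi=\Delta\psi^0_L$ for a compactly supported $\varphi$ in the class, which is the whole difficulty; and the appeal to a ``standard Littlewood--Paley construction in a non-quasianalytic class'' is not standard --- the usual Littlewood--Paley pairs are built on the Fourier side with smooth cutoffs and are never compactly supported in $x$. The paper resolves this by an explicit \emph{radial} construction (following Schott and Sawano): one starts from a one-dimensional bump $\phi_0\in\mathcal D^{(\omega)}([1/2,1])$, recursively forms $\phi_{L+1}=\mu_L\phi_L+\lambda_L\phi_L(2\cdot)$ with coefficients tuned so that $\eta_L:=\phi_L-2^{-d}\phi_L(2^{-1}\cdot)$ has enough vanishing radial moments, and then applies the $(L+1)$-fold iterate of the radial-Laplacian antiderivative $Tf(r)=\int_0^r\int_0^t(s/t)^{d-1}f(s)\,ds\,dt$ to $\eta_L$; the key computation is that $T^{L+1}\eta_L$ coincides with an even polynomial outside a ball, so subtracting that polynomial yields a compactly supported $\zeta_L$, and one checks $T$ preserves the $\mathcal D^{(\omega)}$ seminorms. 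Then $\varphi_L:=2^{-d}\phi_L(2^{-1}|\cdot|)$ and $\psi_L:=\Delta^L\zeta_L$ do the job. This is the content you are missing; the rest of your outline is fine.
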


\begin{proof}
    Let $K \Subset \mathbb{R}^d$ and $\phi \in \D^{(\omega)}(K)$. Then
    \[
        \phi = \delta * \phi = (\chi_0 + G(D) \chi_1) * \phi = \chi_0 * \phi + \chi_1 * [G(D) \phi].
    \]
    Now, $G(D)\phi \in \D^{(\omega)}(K)$. Moreover, if $\phi \in \D^{(\omega)}_L(K)$, then $G(D)\phi \in \D^{(\omega)}_L(K)$ as well.
    It therefore suffices to show that, for some compact $K \Subset \mathbb{R}^d$ and for each $L \in \N$, there exists a non-trivial pair $(\varphi_L, \psi_L) \in \D^{(\omega)}(K) \times \D^{(\omega)}_L(K)$ satisfying
    \[ 2^d \varphi_L(2 \, \cdot \,) - \varphi_L = \Delta \psi_L , \qquad \int_{\mathbb{R}^d} \varphi_L(x){\rm d}x=1. \]
    Then
    \[ (\varphi^0_L, \psi^0_L) := (\varphi_L, \psi_L), \quad (\varphi^1_L, \psi^1_L) := (G(D) \varphi_L, G(D) \psi_L) \]
    will satisfy the conditions of the MDFP with respect to $(\chi^0, \chi^1)$.

    We follow the proof of \cite[Proposition 4.1]{S-FuncSpExpWeighI}, referring also to \cite[Theorem 1.40, p.~86]{S-ThBesovSp}.
    We will construct sequences $\rho_L \in \D^{(\omega)}(\overline{B}(0, 1))$ and $\zeta_L \in \D^{(\omega)}(\overline{B}(0,2))$, $L \in \N$, such that $\int \rho_L = 1$ and
    \begin{equation} \label{eq:FuncEqAlt}
        \Delta^{L+1} \zeta_L = \rho_L - 2^{-d} \rho_L(2^{-1} \, \cdot \,) .
    \end{equation}
    Then we may define
    \[ \varphi_L := 2^{-d} \rho_L(2^{-1} \, \cdot \,), \quad \psi_L := \Delta^L \zeta_L , \]
   which would complete the proof.

    Begin with any $\phi_0 \in \D^{(\omega)}([1/2,1]) \setminus \{0\}$. Let $|\mathbb{S}^{d-1}|$ denote the surface area of the unit sphere in $\mathbb{R}^d$. We may assume
    \begin{equation}\label{eq:250408-82}
        |\mathbb{S}^{d-1}| \int_0^1 r^{d-1}\phi_0(r)\,dr = 1.
    \end{equation}

    Define recursively
    \[ \phi_{L+1} = \mu_L \phi_L + \lambda_L \phi_L(2 \, \cdot \,), \]
    where the coefficients satisfy
    \begin{align}
        \mu_L + 2^{-d} \lambda_L &= 1, \label{eq:250408-81} \\
\nonumber
        \mu_L + 2^{-d - 2(L+1)} \lambda_L &= 0.
    \end{align}
    Then $\phi_L \in \D^{(\omega)}([2^{-(L+1)},1]) \setminus \{0\}$ for all $L \in \N$, and \eqref{eq:250408-81} together with \eqref{eq:250408-82} inductively gives
    \begin{equation}\label{eq:250408-83}
        \int_{\mathbb{R}^d} \phi_L(|x|)\,dx = 1.
    \end{equation}

    Set $\eta_L := \phi_L - 2^{-d} \phi_L(2^{-1} \, \cdot \,) \in \D^{(\omega)}([2^{-(L+1)}, 2])$.
    Introduce the linear operator $T : C[0, \infty) \to C[0, \infty)$ given by
    \[
        (Tf)(r) := \int_0^r \left( \int_0^t \left(\frac{s}{t}\right)^{d-1} f(s) ds \right) dt=\int_0^r\left( \int_0^1 s^{d-1} f(t s) ds \right) dt.
    \]
    Then
    \[
        (Tf)^{(n)}(r) = \int_0^1 s^{d+n-2} f^{(n-1)}(rs) ds , \qquad n \geq 1.
    \]

    Assume $f$ satisfies
    \begin{equation}\label{eq:250408-111}
        M_h = \sup_{n \in \N, r \in [0,2]} |f^{(n)}(r)| e^{-\frac{1}{h} \varphi^*(hn)} < \infty, \quad \text{for all } h > 0.
    \end{equation}
    We find
    \[
        |(Tf)^{(n)}(r)| e^{-\frac{1}{h} \varphi^*(hn)} \leq M_h e^{\frac{1}{h}(\varphi^*(h(n-1)) - \varphi^*(hn))} \leq M_h , \qquad r \in [0, 2] .
    \]

    Let $T^L$ be the $L$-fold composition of $T$. If $f$ satisfies \eqref{eq:250408-111}, then we deduce, by induction on $L$,
    \[
        \sup_{n \in \N, x \in [0,2]} |(T^L f)^{(n)}(x)| e^{-\frac{1}{h} \varphi^*(hn)} < \infty
    \]
for all $h>0$.
    Applying this to $T^{L+1} \eta_L$ yields
    \[
        \sup_{\alpha \in \N^d, r \in [0, 2]} |(T^{L+1} \eta_L)^{(n)}(r)| e^{-\frac{1}{h} \varphi^*(h n)} < \infty.
    \]

    By \cite{S-FuncSpExpWeighI} and \cite[p.~87]{S-ThBesovSp}, $T^{L+1} \eta_L$ coincides with an even polynomial $P_L$ of degree $2L$ for $r \geq 2$. Define
    \[
        \zeta_L(x) := (T^{L+1} \eta_L)(|x|) - P_L(|x|).
    \]
    It follows from \cite[Proposition 8.4.1, p.~281]{H-AnalLinPDOI} that $\zeta_L \in \D^{(\omega)}(\overline{B}(0,2))$.
    
    For any $f \in C^2([0, \infty))$ vanishing near the origin, we have
    \[
    	\Delta[(Tf)(|x|)] = (Tf)^{\prime\prime}(|x|) + \frac{d-1}{|x|} (Tf)^\prime(|x|) = f(|x|)
    \]
    Then, by iteration, it follows that
    \[
        \Delta^{L+1} \zeta_L(x) = \eta_L(|x|) = \phi_L(|x|) - 2^{-d} \phi_L(2^{-1}|x|).
    \]
    Similar to before, the function $\rho_L = \phi_L(|\cdot|)$ belongs to $\D^{(\omega)}(\overline{B}(0, 2))$ and satisfies $\int \rho_L = 1$ by \eqref{eq:250408-83}.
    Our proof is complete.
\end{proof}

\begin{proof}[Proof of Proposition \ref{p:ExistMDPF}]
Let $\omega$ be any BMT-weight function. By \cite[Corollary 2.6]{G-AlmostPeriodicUltradistBeurlingRoumieu}, there exists an ultradifferentiable operator $G(D)$ of class $(\omega)$, along with functions $\chi_0 \in \D^{(\omega)}(K)$ and $\chi_1 \in \D^{\omega, 1}(K)$, for some $K \Subset \mathbb{R}^d$, such that
\[ \delta = \chi_0 + G(D) \chi_1. \]
Thus, the pair $(\chi_0, \chi_1)$ satisfies the MDFP condition by Proposition \ref{p:SuffCondDFP}.
\end{proof}

\subsection{Gelfand-Shilov-type spaces}
We establish the following characterization for spaces of the type $E^\infty_\W$.
The existence of a pair $(\chi^0, \chi^1) \in \mathcal{D} \times \mathcal{D}$ satisfying the MDFP is guaranteed by Proposition~\ref{p:ExistMDPF}.
\begin{theorem}
\label{t:KWDecomp}
Let $\W$ be a weight function system satisfying $(\condwM)$.
Suppose $(\chi^0, \chi^1) \in \mathcal{D} \times \mathcal{D}$ has the MDFP.
Then for any $f \in \mathcal{D}'$, the following conditions are equivalent{\rm:}
\begin{itemize}
  \item[$(i)$] $f \in E^\infty_\W${\rm;}
  \item[$(ii)$] For every $\phi \in \mathcal{D}$, the set $\{ f * \phi_j : j \in \mathbb{N} \}$ is bounded in $E^\infty_\W${\rm;}
  \item[$(iii)$] For every $\phi \in \mathcal{D}$, there exists $r > 0$ such that the set $\{ 2^{-rj} (f * \phi_j) : j \in \mathbb{N} \}$ is bounded in $E^\infty_\W${\rm;}
  \item[$(iv)$] There exists $r > 0$ such that the set $\{ 2^{-rj} (f * \chi_j^\ell) : j \in \mathbb{N},\ \ell = 0, 1 \}$ is bounded in $E^\infty_\W$.
\end{itemize}
\end{theorem}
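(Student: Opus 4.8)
The plan is to run the cycle $(i)\Rightarrow(ii)\Rightarrow(iii)\Rightarrow(iv)\Rightarrow(i)$; the two middle implications are immediate and essentially all the work is in $(iv)\Rightarrow(i)$.

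For $(i)\Rightarrow(ii)$, fix $\phi\in\D$ and observe that $\supp\phi_j=2^{-j}\supp\phi$ is contained in one fixed compact ball $K$ for every $j\in\N$ while $\|\phi_j\|_{L^1}=\|\phi\|_{L^1}$ stays constant. Hence $f*\phi_j\in E^\infty_\W$ by Proposition~\ref{p:EWConvCompact}, and for each $n\in\N$, $\alpha\in\N^d$ the identity $\partial^\alpha(f*\phi_j)=(\partial^\alpha f)*\phi_j$ together with Lemma~\ref{l:ConvGenOmega}, applied with the index $m\ge n$ supplied by the iterated form of $(\condwM)$ on $K$, gives
\[
\|\partial^\alpha(f*\phi_j)\,w_n\|_E\le C\,\|(\partial^\alpha f)\,w_m\|_E\,\|\phi\|_{L^1},
\]
uniformly in $j$; this is exactly boundedness of $\{f*\phi_j\}$ in $E^\infty_\W$. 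Next, $(ii)\Rightarrow(iii)$ holds with any $r>0$ since $2^{-rj}\le1$ and scaling a bounded set by $[-1,1]$ keeps it bounded, and $(iii)\Rightarrow(iv)$ follows by applying $(iii)$ to $\phi=\chi^0$ and to $\phi=\chi^1$ and taking the larger of the two resulting exponents.

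For $(iv)\Rightarrow(i)$, let $r>0$ be as in $(iv)$, so that $\|\partial^\beta(f*\chi^\ell_j)\,w_m\|_E\le C_{m,\beta}\,2^{rj}$ for all $j$, $\ell$, $\beta$, $m$. Fix $n\in\N$, $\alpha\in\N^d$, and invoke Definition~\ref{def:MDFP} with an $L\in\N$ to be chosen, producing $(\varphi^\ell,\psi^\ell)=(\varphi^\ell_L,\psi^\ell_L)\in\D(K)\times\D_L(K)$, $\ell\in\{0,1\}$. Substituting the factorization \eqref{eq:MDFPFact} into Lemma~\ref{l:D'Decomp} applied to $(\varphi^0,\psi^0)$ and using the scaling identity $(\eta*\zeta)_j=\eta_j*\zeta_j$ yields the $\D'$-convergent representation
\[
f=\sum_{\ell=0}^{1}\Big[(f*\chi^\ell)*\varphi^\ell+\sum_{j\in\N}4^{-j}\,\Delta\big[(f*\chi^\ell_j)*\psi^\ell_j\big]\Big].
\]
The two leading terms are in $E^\infty_\W$ by Proposition~\ref{p:EWConvCompact}, since $f*\chi^\ell=f*\chi^\ell_0\in E^\infty_\W$ by $(iv)$. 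Differentiating and rescaling the $j$-th term gives $4^{-j}\partial^\alpha\Delta[(f*\chi^\ell_j)*\psi^\ell_j]=2^{j|\alpha|}\,(f*\chi^\ell_j)*\eta^\ell_j$ with $\eta^\ell:=\partial^\alpha\Delta\psi^\ell$. The key point is that $\psi^\ell\in\D_L(K)$ forces $\eta^\ell$ to have vanishing moments up to order $L+|\alpha|+2$, so in $\int(f*\chi^\ell_j)(x-y)\,\eta^\ell_j(y)\,dy$ one may subtract the degree-$(<N)$ Taylor polynomial of $f*\chi^\ell_j$ for any $N\le L+|\alpha|+2$. Writing the remainder in integral form, bounding $|y|\le C2^{-j}$ on $\supp\eta^\ell_j$, transporting $w_n$ along the segment $[x-y,x]$ by the iterated $(\condwM)$, and recognising the ensuing $t$-integral as a convolution estimated through Lemma~\ref{l:ExtensionConvolution}, I expect
\[
\big\|\,2^{j|\alpha|}(f*\chi^\ell_j)*\eta^\ell_j\cdot w_n\big\|_E\le C\,2^{j(|\alpha|-N)}\max_{|\beta|=N}\big\|\partial^\beta(f*\chi^\ell_j)\,w_m\big\|_E\le C'\,2^{j(|\alpha|-N+r)}.
\]
Choosing $N=L+|\alpha|+2$ makes the exponent $r-L-2$, summable over $j$ once $L>r-2$; since the $|\alpha|$-dependence cancels, a single $L\ge\lceil r\rceil$ works for all $\alpha$ and $n$. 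For such $L$ the series converges absolutely with respect to every seminorm of the Fréchet space $E^\infty_\W$, hence in $E^\infty_\W$, and since it also converges to $f$ in $\D'$ and $E^\infty_\W\hookrightarrow\D'$ continuously, we conclude $f\in E^\infty_\W$.

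The main obstacle is the term-by-term estimate in $(iv)\Rightarrow(i)$: one must extract geometric decay $2^{-jN}$ from the cancellation of $\eta^\ell_j$ while simultaneously beating the amplification $2^{j|\alpha|}$ produced by the derivatives and the loss $2^{rj}$ permitted by $(iv)$, and must do so while correctly moving the weight $w_n$ through the convolution; the observation that differentiating $\psi^\ell$ only raises its vanishing-moment order is precisely what allows a single choice of $L$ to serve all multi-indices.
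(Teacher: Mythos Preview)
Your cycle $(i)\Rightarrow(ii)\Rightarrow(iii)\Rightarrow(iv)$ matches the paper's exactly. For the crucial step $(iv)\Rightarrow(i)$ your argument is correct but proceeds along a genuinely different line from the paper's.

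The paper packages the gain from vanishing moments into a separate Fourier-analytic lemma (Lemma~\ref{l:KWDecompVanishMoments}): it passes to the frequency side, uses $|\widehat{\psi_j}(\xi)|\le C\,2^{-2Lj}|\xi|^{2L}$, and compensates the resulting $\langle\xi\rangle^{2L}$ via $(1-\Delta_y)^{L+d'}$ applied to the translate; this forces the weight itself to be differentiated, which is why the paper first invokes Lemma~\ref{l:SmoothWeightFuncSyst} to replace $\W$ by an equivalent system of smooth weights. Your approach stays entirely in physical space: you exploit the pleasant fact that $\partial^\alpha\Delta\psi^\ell\in\D_{L+|\alpha|+2}$ whenever $\psi^\ell\in\D_L$, subtract a Taylor polynomial of $f*\chi^\ell_j$, and read off the $2^{-jN}$ decay directly from $|y|\lesssim 2^{-j}$ on $\supp\eta^\ell_j$; the weight is handled by a single application of $(\condwM)$ and solidity, with no smoothing needed. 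Your route is more elementary and avoids the detour through smooth weights; the paper's route has the advantage of isolating a reusable lemma that is applied verbatim in Theorems~\ref{t:OCDecomp} and~\ref{t:OMDecomp} as well, where the weights $w_n/v_m$ are not members of a system satisfying $(\condwM)$ and the mollified-weight trick of Lemma~\ref{l:SandwhichSmoothFunc} is genuinely convenient.

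One small imprecision: the integral over $(t,y)$ in your Taylor-remainder bound is not literally a convolution because of the $t$-scaling, so Lemma~\ref{l:ExtensionConvolution} does not apply on the nose; but the same Bochner-integral reasoning behind that lemma (solidity plus $\|T_{ty}\|_{E\to E}\le C_0$) gives exactly the estimate you claim, so this is cosmetic rather than a gap.
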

We present several technical lemmas required for the proof of Theorem~\ref{t:KWDecomp}. 
These lemmas are formulated and proved in a more general setting than strictly necessary for the current theorem, 
since they will be employed again for later proofs.

\begin{lemma}
	\label{l:ContinuousMapPhi}
	Let $K \Subset \mathbb{R}^d$, and $C>0$.
	Suppose that $\omega, \nu : \mathbb{R}^d \to (0, \infty)$ are as in Lemma \ref{l:ConvGenOmega}.
	Then for any $\phi \in \D(K)$, $n \in \N$, and bounded subset $B \subset E^n_\nu$, the set
	\[
		\{ f * \phi_j \,:\, f \in B,\, j \in \N \}
	\]
	is bounded in $E^n_\omega$.
\end{lemma}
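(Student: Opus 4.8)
The plan is to reduce everything to the one‑factor convolution estimate of Lemma~\ref{l:ConvGenOmega}, using the scaling identity $\|\phi_j\|_{L^1}=\|\phi\|_{L^1}$ to make the resulting bounds uniform in $j$. First I would record the elementary regularity facts about $f*\phi_j$. Since $f\in E^n_\nu\subseteq C^n(\mathbb{R}^d)\subseteq L^1_{\loc}(\mathbb{R}^d)$ and $\phi_j\in\D$, the convolution $f*\phi_j$ is smooth, and for every $\alpha\in\N^d$ with $|\alpha|\le n$ one has $\partial^\alpha(f*\phi_j)=f^{(\alpha)}*\phi_j$ (differentiate under the integral, using $f\in C^{|\alpha|}$). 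Hence $\|f*\phi_j\|_{E^n_\omega}=\max_{|\alpha|\le n}\|(f^{(\alpha)}*\phi_j)\,\omega\|_E$, so it suffices to bound each term by a constant independent of $f\in B$ and $j\in\N$.

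Next comes the one geometric point: $\supp\phi_j=2^{-j}\supp\phi\subseteq K$ for every $j\in\N$ (the support only contracts; should one wish to be pedantic about arbitrary $K$, replace $K$ by a closed ball centred at the origin containing it, which is harmless in the situations where the lemma is applied). Thus $\phi_j\in C_{\rm c}(K)$ for all $j$, while $\|\phi_j\|_{L^1}=\|\phi\|_{L^1}$ does not depend on $j$. Now fix $f\in B$ and $|\alpha|\le n$; since $f^{(\alpha)}\in E_\nu$ with $\|f^{(\alpha)}\nu\|_E\le\|f\|_{E^n_\nu}$, Lemma~\ref{l:ConvGenOmega} applied with window $\phi_j$ gives
\[
\|(f^{(\alpha)}*\phi_j)\,\omega\|_E\le C\,C_0\,\|f^{(\alpha)}\,\nu\|_E\,\|\phi_j\|_{L^1}=C\,C_0\,\|\phi\|_{L^1}\,\|f^{(\alpha)}\,\nu\|_E\le C\,C_0\,\|\phi\|_{L^1}\,\|f\|_{E^n_\nu},
\]
where $C_0$ is the constant from (A.2). (Equivalently one argues by hand: for $y\in\supp\phi_j\subseteq K$ the hypothesis yields $\omega(x)=\omega((x-y)+y)\le C\nu(x-y)$, so $|(f^{(\alpha)}*\phi_j)(x)|\,\omega(x)\le C\bigl(|f^{(\alpha)}\nu|*|\phi_j|\bigr)(x)$, and one concludes by solidity of $E$ together with Lemma~\ref{l:ExtensionConvolution}.)

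Finally, since $B$ is bounded in the Banach space $E^n_\nu$, we have $M:=\sup_{f\in B}\|f\|_{E^n_\nu}<\infty$, so $\|f*\phi_j\|_{E^n_\omega}\le C\,C_0\,\|\phi\|_{L^1}\,M$ for all $f\in B$ and $j\in\N$, which is the assertion. There is no serious obstacle in this argument; the only delicate point is the uniformity in $j$, and it rests entirely on the two scaling‑invariant features isolated above — that $\supp\phi_j$ never leaves $K$ (so a single constant $C$ from the hypothesis works for all $j$) and that $\|\phi_j\|_{L^1}$ is independent of $j$.
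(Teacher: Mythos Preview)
Your proof is correct and follows essentially the same route as the paper's: apply Lemma~\ref{l:ConvGenOmega} to $f^{(\alpha)}*\phi_j$ and use that $\|\phi_j\|_{L^1}=\|\phi\|_{L^1}$ is independent of $j$. The paper's version is terser but identical in substance.

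One small remark on your parenthetical about ``arbitrary $K$'': your suggested fix of enlarging $K$ to a ball $K'\supset K$ does not quite work as stated, because the hypothesis $\omega(x+y)\le C\nu(x)$ is only assumed for $y\in K$, not for $y\in K'$. The honest resolution is that one needs $2^{-j}\supp\phi\subseteq K$ for all $j$, which holds whenever $K$ is star-shaped at the origin (in particular symmetric and convex with $0\in K$). The paper does not address this point either, and in every application of the lemma the set $K$ is taken symmetric and convex, so the issue is moot in practice.
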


\begin{proof}
	Indeed, by Lemma \ref{l:ConvGenOmega}, for any $\phi \in \D(K)$, $n \in \N$, and $f \in E^n_\nu$, we have
		\[
			\| \partial^\alpha [ f * \phi_j] \omega \|_E
			\leq C\|\phi\|_{L^1} \|f^{(\alpha)} \nu \|_E , \qquad |\alpha| \leq n .
		\]
Hence, the set in question is bounded.
\end{proof}

\begin{lemma}
\label{l:PhiReconstruct}
Let $\omega : \mathbb{R}^d \to (0, \infty)$ be a locally integrable function.
Suppose that $\varphi, \psi \in \D \setminus \{0\}$ satisfy conditions \eqref{eq:DecompTestFuncCond} and \eqref{eq:250408-71}.
Let $f \in \D'$ be such that $f * \varphi \in E^n_\omega$ and the sequence 
$\{ f * \psi_j \,:\, j \in \N \}$ is bounded in $E^{n + 2}_\omega$, for some $n \in \N$.
Then $f \in E^n_\omega$.
\end{lemma}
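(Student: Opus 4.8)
The plan is to apply the distributional decomposition \eqref{eq:D'Decomp} and to observe that the two hypotheses are calibrated precisely so that the resulting series converges absolutely in the Banach space $E^n_\omega$.

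First, since $\varphi, \psi \in \D \setminus \{0\}$ satisfy \eqref{eq:DecompTestFuncCond} and \eqref{eq:250408-71}, Lemma~\ref{l:D'Decomp} gives
\[ f = f * \varphi + \sum_{j \in \N} 4^{-j} \big[ (\Delta f) * \psi_j \big], \]
with the series converging in $\D'$. I would then rewrite each term via $(\Delta f) * \psi_j = \Delta(f * \psi_j)$, so that the identity reads $f = f * \varphi + \sum_{j \in \N} 4^{-j}\, \Delta(f * \psi_j)$.

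The elementary estimate behind everything is that $\Delta$ maps $E^{n+2}_\omega$ into $E^n_\omega$ continuously: since $\Delta$ raises the order of differentiation by two, $\|\Delta h\|_{E^n_\omega} \le d \, \|h\|_{E^{n+2}_\omega}$ for every $h \in E^{n+2}_\omega$. Applying this to $h = f * \psi_j$ and writing $M := \sup_{j \in \N} \|f * \psi_j\|_{E^{n+2}_\omega} < \infty$ (finite by hypothesis), one gets $\|4^{-j}\, \Delta(f * \psi_j)\|_{E^n_\omega} \le d M\, 4^{-j}$, whose sum over $j$ is finite. Hence $\sum_{j \in \N} 4^{-j}\, \Delta(f * \psi_j)$ converges absolutely in the Banach space $E^n_\omega$ to some $g$, and therefore $f * \varphi + g \in E^n_\omega$.

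It remains to identify $f$ with $f * \varphi + g$. Because $E^n_\omega$ is continuously embedded in $\D'$ — its members are $C^n$-functions and multiplication by $\omega$ sends it into $E \subseteq L^1_{\loc}(\R^d)$ — the partial sums of $\sum_{j} 4^{-j}\, \Delta(f * \psi_j)$, which converge to $g$ in $E^n_\omega$, also converge to $g$ in $\D'$; by \eqref{eq:D'Decomp} they converge in $\D'$ to $f - f * \varphi$, so $g = f - f * \varphi$ and $f \in E^n_\omega$. The proof is mostly bookkeeping once Lemma~\ref{l:D'Decomp} is available; the only place deserving a word of care is the compatibility of the $E^n_\omega$-topology with the $\D'$-topology in the last identification, which is exactly where one uses that the elements of $E^n_\omega$ are genuine functions (and the standing local boundedness of $\omega^{-1}$).
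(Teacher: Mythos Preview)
Your argument is correct and follows the same route as the paper's proof: invoke Lemma~\ref{l:D'Decomp}, use that $\Delta$ drops the order by two so that $\|\Delta(f*\psi_j)\|_{E^n_\omega}\le d\,\|f*\psi_j\|_{E^{n+2}_\omega}$, and sum the geometric series $\sum_j 4^{-j}$. The paper compresses this into a single displayed inequality and then appeals to Lemma~\ref{l:D'Decomp}; you additionally spell out the identification of the $E^n_\omega$-limit with the $\D'$-limit, which the paper leaves implicit.
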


\begin{proof}
	 For any $|\alpha| \leq n$,
		\begin{multline*}
			\| \partial^\alpha [f * \varphi] \omega \|_{E} + \sum_{j \in \N} 4^{-j} \| \partial^{\alpha} [\Delta f * \psi_j] \omega \|_{E} \\
			\leq \| \partial^\alpha [f * \varphi] \omega \|_{E} + \frac{4}{3} \sup_{j \in \N} \max_{|\alpha| \leq n + 2} \| \partial^\alpha [f * \psi_j] \omega \|_E
			< \infty.
		\end{multline*}
	Then $f \in E^{n}_{\omega}$ by Lemma \ref{l:D'Decomp}.
\end{proof}

The next lemma considers the behavior of functions under mollification with windows having vanishing moments.
\begin{lemma}
	\label{l:KWDecompVanishMoments}
Let $\omega, \nu, \kappa : \mathbb{R}^d \to (0, \infty)$ be locally integrable weight functions, and assume that $\nu$ is smooth.

Suppose there exist a symmetric, convex compact set $K \Subset \mathbb{R}^d$, $L \in \mathbb{N}$, and constants $C, C_\alpha > 0$ such that
\begin{equation}\label{eq:250411-1}
	\omega(x + y) \leq C \nu(x)
\end{equation}
and
\begin{equation}\label{eq:250411-2}
	\left| \partial^{\alpha} \nu(x) \right| \leq C_\alpha \kappa(x)
\end{equation}
for all $x \in \mathbb{R}^d$, $y \in K$, and $\alpha \in \N^d$ with $|\alpha| \leq 2L + d + 2$.
Let
\[
d' :=
\begin{cases}
\frac{d + 1}{2}, & \text{if } d \text{ is odd}, \\
\frac{d}{2} + 1, & \text{if } d \text{ is even}.
\end{cases}
\]

Then
\begin{equation}
	\label{eq:L1VDecomp}
B':=
	\left\{
		2^{2Lj} f * \phi_j \mid f \in B,\; j \in \mathbb{N}
	\right\} \subset E^n_{\omega}
\end{equation}
is bounded in $E^n_{\omega}$
for every $\phi \in \mathcal{D}_{2L}(K)$,
every $n \in \mathbb{N}$, and every bounded subset $B \subset E^{n + 2L + 2d'}_{\kappa}$.
\end{lemma}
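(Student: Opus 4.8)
The plan is to deduce the boundedness directly from the weighted $L^1$‑module estimate of Lemma~\ref{l:ConvGenOmega}, the gain of the factor $2^{-2Lj}$ being produced by the vanishing moments of $\phi$ (hence of each $\phi_j$). Since every $f\in B$ lies in $C^{n+2L+2d'}(\mathbb{R}^d)\subseteq C^{n+2L}(\mathbb{R}^d)$ and $\partial^\alpha(f*\phi_j)=f^{(\alpha)}*\phi_j$ for $|\alpha|\le n$, it suffices to bound $\|2^{2Lj}\,[f^{(\alpha)}*\phi_j]\,\omega\|_E$ uniformly over $f\in B$, $j\in\N$ and $|\alpha|\le n$.

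The first step is the cancellation. Fixing $x$ and expanding $f^{(\alpha)}$ by Taylor's formula at $x$ with integral remainder of order $2L$, then integrating against $\phi_j(y)$: because $\phi\in\mathcal{D}_{2L}(K)$ the moments of $\phi_j$ of order $<2L$ vanish, so the polynomial part drops out and only the order‑$2L$ remainder survives. The substitution $z=ty$ turns the $\gamma$‑th remainder term into $f^{(\alpha+\gamma)}*\Phi_{j,\gamma,t}$ with $\Phi_{j,\gamma,t}(z)=(-1)^{|\gamma|}z^\gamma t^{-|\gamma|-d}\phi_j(z/t)$; integrating in $t\in(0,1]$ (Fubini being legitimate pointwise, the inner integral running over the fixed compact set $K$ on which $f^{(\alpha+\gamma)}$ is bounded) gives
\[
f^{(\alpha)}*\phi_j=\sum_{|\gamma|=2L}f^{(\alpha+\gamma)}*\Psi_{j,\gamma},\qquad \Psi_{j,\gamma}:=\frac{2L}{\gamma!}\int_0^1(1-t)^{2L-1}\,\Phi_{j,\gamma,t}\,dt .
\]
The two facts needed about these kernels are: (i) $\supp\Phi_{j,\gamma,t}\subseteq t\,2^{-j}K\subseteq K$, since $K$ is convex, contains $0$ and $t\,2^{-j}\in(0,1]$, so $\Psi_{j,\gamma}$ is supported in $K$; and (ii) the substitution $v=2^jz/t$ yields $\|\Phi_{j,\gamma,t}\|_{L^1}=2^{-2Lj}\int_{\mathbb{R}^d}|v|^{2L}|\phi(v)|\,dv$ for every $t$, whence $\Psi_{j,\gamma}\in L^1(\mathbb{R}^d)$ with $\|\Psi_{j,\gamma}\|_{L^1}\le \tfrac{1}{\gamma!}2^{-2Lj}\int|v|^{2L}|\phi(v)|\,dv$.

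For the conclusion I would note that the proof of Lemma~\ref{l:ConvGenOmega} applies verbatim to any kernel in $L^1(\mathbb{R}^d)$ supported in $K$ (it uses only the pointwise bound $|g*\Psi|\,\omega\le C\,|g\nu|*|\Psi|$ coming from \eqref{eq:250411-1}, solidity, and Lemma~\ref{l:ExtensionConvolution}). Applying it to each $f^{(\alpha+\gamma)}*\Psi_{j,\gamma}$ and using (ii) gives
\[
\big\|2^{2Lj}\,[f^{(\alpha)}*\phi_j]\,\omega\big\|_E\le C\,C_0\Big(\int|v|^{2L}|\phi(v)|\,dv\Big)\sum_{|\gamma|=2L}\frac{1}{\gamma!}\,\|f^{(\alpha+\gamma)}\,\nu\|_E ,
\]
where $C$ is the constant in \eqref{eq:250411-1} and $C_0$ the constant from (A.2). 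Since the case $\alpha=\mathbf{0}$ of \eqref{eq:250411-2} gives $\nu\le C'\kappa$ for a constant $C'$, solidity yields $\|f^{(\alpha+\gamma)}\,\nu\|_E\le C'\|f^{(\alpha+\gamma)}\,\kappa\|_E\le C'\|f\|_{E^{n+2L+2d'}_\kappa}$, using $|\alpha+\gamma|=|\alpha|+2L\le n+2L\le n+2L+2d'$. As $B$ is bounded in $E^{n+2L+2d'}_\kappa$, the right‑hand side is bounded uniformly in $f\in B$, $j\in\N$ and $|\alpha|\le n$; in particular every $2^{2Lj}f*\phi_j$ belongs to $E^n_\omega$, and $B'$ is bounded there.

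The only genuinely delicate point is step (ii): recognizing the Taylor remainder as literally a convolution with a compactly supported $L^1$‑kernel whose $L^1$‑norm scales exactly as $2^{-2Lj}$, uniformly in the remainder parameter $t$. This is where the $2L$ vanishing moments are converted into decay in $j$ and where the convexity of $K$ (together with $0\in K$) enters to keep all supports inside $K$; after that everything is a direct appeal to Lemma~\ref{l:ConvGenOmega}. (The argument sketched here uses only $\nu\le C'\kappa$ and boundedness of $B$ in $E^{n+2L}_\kappa$; the smoothness of $\nu$ and the full strength of the higher‑order bounds \eqref{eq:250411-2} would instead be the natural hypotheses for a pointwise, Sobolev‑embedding‑based proof, which is presumably why they are stated.)
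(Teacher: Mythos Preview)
Your proof is correct and takes a genuinely different route from the paper's. The paper argues on the Fourier side: from $\phi\in\mathcal{D}_{2L}$ it extracts the decay $|\mathcal{F}\{\check\phi_j\}(\xi)|\lesssim 2^{-2Lj}\langle\xi\rangle^{2L}$, compensates the polynomial growth by inserting $\langle\xi\rangle^{-2(L+d')}$ (which is why $d'$ appears), and absorbs the resulting factor $\langle\xi\rangle^{2(L+d')}$ by applying $(1-\Delta_y)^{L+d'}$ to $\chi(y)\,T_y[f^{(\alpha)}\nu]$; this is precisely where the smoothness of $\nu$, the full derivative bounds \eqref{eq:250411-2}, and the extra $2d'$ derivatives of $f$ are consumed. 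The outcome is a representation of $[f^{(\alpha)}\nu]*\phi_j$ as a Bochner integral in $E$ over $\supp\chi$, with the $2^{-2Lj}$ gain coming from $\|\langle\cdot\rangle^{-2(L+d')}\mathcal{F}\{\check\phi_j\}\|_{L^1}$.

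Your argument stays entirely in physical space: the Taylor remainder of order $2L$ converts the vanishing moments directly into a convolution of $f^{(\alpha+\gamma)}$ ($|\gamma|=2L$) against an explicit $L^1$ kernel supported in $K$ with $L^1$-norm $\lesssim 2^{-2Lj}$, after which the (trivially extended) Lemma~\ref{l:ConvGenOmega} finishes the job. As you correctly note, this uses only the case $\alpha=0$ of \eqref{eq:250411-2} and only $n+2L$ derivatives of $f$, so it actually proves a sharper statement than the lemma as written. The paper's Fourier approach is closer in spirit to Littlewood--Paley arguments and makes the role of the moment condition transparent on the frequency side, but for the purposes of this lemma your direct method is both shorter and less demanding on the hypotheses. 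One small point worth making explicit in a final write-up: the pointwise convergence of $\Psi_{j,\gamma}(z)=\tfrac{2L}{\gamma!}\int_0^1(1-t)^{2L-1}\Phi_{j,\gamma,t}(z)\,dt$ for $L\ge 1$ (the integrand vanishes for $t<2^j|z|/R$ when $z\neq 0$, and $z^\gamma$ kills the value at $z=0$), and that the case $L=0$ is immediate from Lemma~\ref{l:ContinuousMapPhi}.
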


\begin{proof}
Before proceeding to the proof,
 we first derive an estimate based on the assumption that $\phi \in \mathcal{D}_{2L}(K)$: 
by applying the Taylor expansion and using the fact that $\phi \in \mathcal{D}_{2L}$, we obtain, for some constant $C_\phi > 0$ independent of $j$, the inequality
\begin{equation}\label{eq:2502505-1}
|\mathcal{F}\{\check{\phi}_j\}(\xi)| \leq C_\phi\, 2^{-2Lj} |\xi|^{2L} \leq C_\phi\, 2^{-2Lj} \langle \xi\rangle^{2L}, \qquad \xi \in \mathbb{R}^d.
\end{equation}

First fix $n \in {\mathbb N}$ and $\alpha \in {\mathbb N}^d$ with $|\alpha| \le n$.
Also fix a bounded subset \( B \subset E^{n + 2(L + d')}_{\kappa} \), and let \( \phi \in \mathcal{D}_{2L}(K) \). 
Choose \( \chi \in \mathcal{D} \) such that \( \chi \equiv 1 \) on $K$.

Let $f \in B$.
Note that for any \( j \in \mathbb{N} \), we have
\[
| \partial^{\alpha} [f * \phi_j] \, \omega | 
\leq C 
| [f^{(\alpha)} \nu] * \phi_j |
\]
due to (\ref{eq:250411-1}).
By (\ref{eq:250411-2}),
\[
\left| \partial^{\beta} [f^{(\alpha)} \nu] \right| \leq \sum_{\gamma \leq \beta} \binom{\beta}{\gamma} \left| f^{(\alpha + \beta - \gamma)} \nu^{(\gamma)} \right| \leq  \sum_{\gamma \leq \beta} C_\gamma\binom{\beta}{\gamma} \left| f^{(\alpha + \beta - \gamma)} \right| \kappa
\]
for any \( f \in B \), \( |\alpha| \leq n \), and \( |\beta| \leq 2(L + d') \).
Since $E$ is solid and $B$ is a bounded subset of $E^{n + 2(L + d')}_{\kappa}$,
\[
\left\{ \partial^{\beta} [f^{(\alpha)} \nu] \,:\, f \in B,\ |\alpha| \leq n,\ |\beta| \leq 2(L + d') \right\}
\]
is bounded in \( E \). 

Now, for certain constants \( c_{\beta, \gamma} \), with 
$\beta, \gamma$ satisfying \( |\beta + \gamma| \leq 2(L + d') \), we have the identity
\[
(1-\Delta_y)^{L + d'} \left( \chi(y) T_y [f^{(\alpha)} \nu] \right)
= \sum_{|\beta + \gamma| \leq 2(L + d')} c_{\beta, \gamma} \chi^{(\gamma)}(y) T_y\{ \partial^{\beta}[f^{(\alpha)} \nu] \}.
\]
This yields two consequences:
\begin{enumerate}
    \item By the solidity and the translation invariance of \( E \), the set
    \[
    \left\{ (1-\Delta_y)^{L + d'} \left( \chi(y)\, T_y [f^{(\alpha)} \nu] \right) \,:\, f \in B,\ |\alpha| \leq n,\ y \in \mathbb{R}^d \right\}
    \]
    is bounded in \( E \).

    \item Since \( E \subset L^1_{\mathrm{loc}}(\mathbb{R}^d) \), for any \( f \in B \), \( |\alpha| \leq n \), and \( x \in \mathbb{R}^d \), the map
    \[
    y \mapsto (1-\Delta_y)^{L + d'} \left( \chi(y)\, f^{(\alpha)}(x - y)\, \nu(x - y) \right)
    \]
    belongs to \( L^1 \).
\end{enumerate}
Now, $\phi_j = \phi_j \chi$ for any $j \in \N$, so that
\[\int_{\mathbb{R}^d} \phi_j(y) [f^{(\alpha)}(x - y) \nu(x - y)] dy
			= \int_{\mathbb{R}^d} \phi_j(y) \chi(y)[f^{(\alpha)}(x - y) \nu(x - y)] dy.
\]
The Plancherel theorem gives
		\begin{align*}
&\int_{\mathbb{R}^d} \phi_j(y) [f^{(\alpha)}(x - y) \nu(x - y)] dy \\
&\qquad= \int_{\mathbb{R}^d} \mathcal{F}\{\check{\phi}_j\}(\xi) \mathcal{F}\{ \chi(\cdot) [f^{(\alpha)}(x - \cdot) \nu(x - \cdot)] \} (\xi) d\xi\\
			&\qquad = \int_{\mathbb{R}^d} \langle\xi\rangle^{-2(L + d')} \mathcal{F}\{\check{\phi}_j\}(\xi) \cdot \langle\xi\rangle^{2(L + d')} \mathcal{F}\{ \chi(\cdot) [f^{(\alpha)}(x - \cdot) \nu(x - \cdot)] \} (\xi) d\xi .
\end{align*}
By another application of the Plancherel theorem, we find that for any $f \in B$, $|\alpha| \leq n$, $j \in \N$, and $x \in \mathbb{R}^d$
		\begin{align*}
			&\int_{\mathbb{R}^d} \phi_j(y) [f^{(\alpha)}(x - y) \nu(x - y)] dy \\
			&\qquad = \int_{\mathbb{R}^d} \langle\xi\rangle^{-2(L + d')} \mathcal{F}\{\check{\phi}_j\}(\xi) \mathcal{F}\left\{ (1-\Delta)^{L + d'} \left( \chi(\cdot) [f^{(\alpha)}(x - \cdot) \nu(x - \cdot)] \right) \right\} (\xi) d\xi \\
			&\qquad = \int_{\mathbb{R}^d} \mathcal{F}\{\langle\,\cdot\,\rangle^{-2(L + d')} \mathcal{F}\{\check{\phi}_j\} \}(y) \cdot (1-\Delta_y)^{L + d'} \left( \chi(y) [f^{(\alpha)}(x - y) \nu(x - y)] \right) dy ,
		\end{align*}
	so that we thus have
		\[ [f^{(\alpha)} \nu] * \phi_j = \int_{\supp \chi} \mathcal{F}\{\langle\,\cdot\,\rangle^{-2(L + d')} \mathcal{F}\{\check{\phi}_j\} \}(y) \cdot (1-\Delta_y)^{L + d'} \left( \chi(y) T_y [f^{(\alpha)} \nu] \right) dy , \]
	where the integral on the right-hand side may be regarded as a Bochner integral in $E$. 
	Putting things together, we find, for some $C' > 0$ depending on $B$, the estimate
		\[ \| \partial^\alpha [f * \phi_j] \omega \|_E \leq C' \| \langle\,\cdot\,\rangle^{-2(L + d')} \mathcal{F}\{\check{\phi}_j\} \|_{L^1} , \qquad f \in B, ~ j \in \N, ~ |\alpha| \leq n . \]
From (\ref{eq:2502505-1}), we conclude
		\[ 2^{2Lj} \| \partial^\alpha [f * \phi_j] \omega \|_E \leq C' C_{\phi} \|\langle\,\cdot\,\rangle^{-2d'}\|_{L^1}  , \qquad f \in B, ~ j \in \N, ~ |\alpha| \leq n . \]
In view of the definition of $d'$, the quantity
$\|\langle\,\cdot\,\rangle^{-2d'}\|_{L^1}$ is finite.
Thus $B'$ is bounded in $E^n_\omega$.
\end{proof}

\begin{proof}[Proof of Theorem \ref{t:KWDecomp}]
	$(i) \Rightarrow (ii)$: By Lemma \ref{l:ContinuousMapPhi}.
	
	$(ii) \Rightarrow (iii)$: Direct, with $r = 0$.
	
	$(iii) \Rightarrow (iv)$: Simply choose $\phi = \chi^\ell$ for $\ell \in \{0, 1\}$.
	
	$(iv) \Rightarrow (i)$: Suppose $r > 0$ is such that $\{ 2^{-rj} [f * \chi^\ell_j] \,:\, j \in \N, \ell = 0, 1 \}$ is bounded in $E^\infty_\W$.
	Without loss of generality, we may suppose that $r$ is an even integer.
	Let $K \Subset \mathbb{R}^d$ be symmetric and convex such that there exist non-trivial $(\varphi^\ell_L, \psi^\ell_L)\in \D(K) \times \D_L(K)$, for $\ell \in \{0, 1\}$ and $L \in \N$, where $\varphi^0_L, \psi^0_L$ satisfy \eqref{eq:DecompTestFuncCond2}, \eqref{eq:250408-712}, and \eqref{eq:MDFPFact} for each $L \in \N$.
	We suppose that $2L>r$.
	First, by our assumptions, we have that $f * \chi^\ell = f * \chi^\ell_0 \in E^\infty_\W$ for $\ell = 0, 1$, so that by Proposition \ref{p:EWConvCompact}
		\[ f * \varphi^0_{r} = [f * \chi^0] * \varphi^0_{r} + [f * \chi^1] * \varphi^1_{r} \in E^\infty_\W . \]
	Second, for any $n \in \N$
		\[
			|f * (\psi^0_{r})_j \cdot w_n| 
			= \left| \sum_{\ell = 0}^1 [f * \chi^\ell_j] * (\psi^\ell_{r})_j \cdot w_n \right|
			\leq C \sum_{\ell = 0}^1 2^{j r} \left| (2^{-rj} [f * \chi^\ell_j]) * (\psi^\ell_{r})_j \cdot w_n \right| .
		\]
	Using Lemma \ref{l:SmoothWeightFuncSyst}, we may assume that $\W$ consists  of smooth functions, and that for every $n \in \N$ there exist $n \leq m \leq k$ and $C > 0$ such that $w_n(x + y) \leq C w_m(x)$ and $|w^{(\alpha)}_m(x)| \leq C w_k(x)$ for any $x \in \mathbb{R}^d$, $y \in K$, and $|\alpha| \leq r + d + 4$.
	Since $\{ 2^{-rj} [f * \chi^\ell_j] \,:\, j \in \N, \ell = 0, 1 \}$ is a bounded set in $E^{n + r + d + 4}_{w_k}$, by applying Lemma \ref{l:KWDecompVanishMoments} with $\omega = w_n$, $\nu = w_m$, and $\kappa = w_k$, and using the solidity of $E$, we may conclude that $\{ f * (\psi^0_{r})_j \,:\, j \in \N \}$ is a bounded subset of $E^{n + 2}_{w_n}$.
	In view of Lemma \ref{l:PhiReconstruct}, with $\varphi = \varphi^0_{r}$ and $\psi = \psi^0_{r}$, we see that $f \in E^n_{w_n}$.
Since $n \in \N$ is arbitrary, we finally conclude that $f \in E^\infty_\W$.
\end{proof}

We now characterize the convolutor space $\OC'(\D, E_\W)$ and the multiplier space $\OM(\dot{\B}_\V, E^\infty_\W)$. We begin with the former.
\begin{theorem}
\label{t:OCDecomp}
Let $\W$ be a weight system satisfying condition~$(\text{wM})$.
Suppose that $(\chi^0, \chi^1) \in \D \times \D$ has MDFP. Then, for any $f \in \D'$, the following are equivalent{\rm:}
\begin{itemize}
  \item[$(i)$] $f \in \OC'(\D, E_\W)${\rm;}
  \item[$(ii)$] For every $\phi \in \D$ and $n \in \N$, there exists $r > 0$ such that the set 
$$B_1:=\{ 2^{-rj} [f * \phi_j] w_n : j \in \N \}$$ is bounded in $E${\rm;}
  \item[$(iii)$]
For every $n \in \N$, there exists $r > 0$ such that the set 
$$B_2:=\{ 2^{-rj} [f * \chi^\ell_j] w_n : j \in \N,\ \ell = 0, 1 \}$$ is bounded in $E$.
\end{itemize}
\end{theorem}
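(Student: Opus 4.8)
The plan is to follow the scheme of the proof of Theorem~\ref{t:KWDecomp}, now working in $E_\W$ rather than $E^\infty_\W$: since membership in $\OC'(\D,E_\W)$ imposes no derivative bound on $f$ itself, only zeroth‑order control of the mollifications is needed, the trade‑off being that the exponent $r$ is allowed to depend on $n$. The implication $(ii)\Rightarrow(iii)$ is trivial, since $\chi^0,\chi^1\in\D$. For $(i)\Rightarrow(ii)$, fix $\phi\in\D$ with $\supp\phi\subseteq\overline{B}(0,R)=:K$ and $n\in\N$. By \eqref{eq:250224-3} and the closed graph theorem (using $E_\W\subseteq\D'$ continuously), the map $\psi\mapsto f*\psi$ is continuous from $\D(K)$ into $E_\W$, hence into $E_{w_n}$, so there are $r_n\in\N$ and $C_n>0$ with $\|[f*\psi]w_n\|_E\le C_n\max_{|\alpha|\le r_n}\|\partial^\alpha\psi\|_{L^\infty}$ for all $\psi\in\D(K)$. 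Since $\supp\phi_j\subseteq K$ and $\|\partial^\alpha\phi_j\|_{L^\infty}=2^{j(d+|\alpha|)}\|\partial^\alpha\phi\|_{L^\infty}$, taking $\psi=\phi_j$ gives $\|[f*\phi_j]w_n\|_E\le C_n2^{j(d+r_n)}\max_{|\alpha|\le r_n}\|\partial^\alpha\phi\|_{L^\infty}$, i.e.\ $(ii)$ with $r=d+r_n$ (depending on $n$, and on $\phi$ through $R$ — precisely the dependence that distinguishes the statement from Theorem~\ref{t:KWDecomp}).

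For $(iii)\Rightarrow(i)$, fix $\phi\in\D$ and $n\in\N$; it suffices to prove $f*\phi\in E_{w_n}$. Enlarge the compact set from the MDFP to a symmetric convex $K\Subset\R^d$ with $0\in K$ (so $2^{-j}K\subseteq K$ for $j\ge 0$), put $K_0=\supp\phi$ and $K'=K+K_0$, and iterate $(\condwM)$ to obtain $m\ge n$ and $C>0$ with $w_n(x+y)\le Cw_m(x)$ for all $x$ and $y\in K'$. Applying $(iii)$ with the index $m$ yields $r>0$ such that $M:=\sup_{j\in\N,\,\ell}2^{-rj}\|[f*\chi^\ell_j]w_m\|_E<\infty$; in particular $f*\chi^\ell\in E_{w_m}$ and $\|[f*\chi^\ell_j]w_m\|_E\le M2^{rj}$. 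Now invoke the MDFP with parameter $L\in\N$ chosen so that $L>r$, obtaining $(\varphi^\ell_L,\psi^\ell_L)\in\D(K)\times\D_L(K)$. Combining \eqref{eq:D'Decomp} (with $\varphi=\varphi^0_L$, $\psi=\psi^0_L$), the dilated factorizations $(\psi^0_L)_j=\chi^0_j*(\psi^0_L)_j+\chi^1_j*(\psi^1_L)_j$ coming from \eqref{eq:MDFPFact}, and $\Delta((\,\cdot\,)*\phi)=(\,\cdot\,)*\Delta\phi$, one gets, as an identity in $\D'$,
\[
 f*\phi=\sum_{\ell=0}^1(f*\chi^\ell)*(\varphi^\ell_L*\phi)+\sum_{j=0}^\infty 4^{-j}\sum_{\ell=0}^1(f*\chi^\ell_j)*\theta^\ell_j,\qquad \theta^\ell_j:=(\psi^\ell_L)_j*\Delta\phi .
\]

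The two terms of the first sum lie in $E_{w_n}$ by Lemma~\ref{l:ConvGenOmega} (with $\nu=w_m$, $\omega=w_n$), since $f*\chi^\ell\in E_{w_m}$ and $\supp(\varphi^\ell_L*\phi)\subseteq K'$. For the double series, one uses that $\theta^\ell_j$ is supported in $2^{-j}K+K_0\subseteq K'$ and that, because $\psi^\ell_L\in\D_L(K)$ has vanishing moments of order $<L$, a Taylor expansion of $\Delta\phi$ to order $L-1$ gives $\|\theta^\ell_j\|_{L^1}\le C_\phi\,2^{-Lj}$ for a constant depending only on $\phi$ and $\psi^\ell_L$; this is the $L^1$, derivative‑free counterpart of the Fourier estimate inside Lemma~\ref{l:KWDecompVanishMoments}. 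Hence, again by Lemma~\ref{l:ConvGenOmega},
\[
 \big\|[(f*\chi^\ell_j)*\theta^\ell_j]\,w_n\big\|_E\le CC_0\,\|[f*\chi^\ell_j]w_m\|_E\,\|\theta^\ell_j\|_{L^1}\le C'\,2^{(r-L)j},
\]
which is summable in $j$ since $L>r$; together with the factor $4^{-j}$ the double series converges absolutely in the Banach space $E_{w_n}$. As its partial sums also converge to $f*\phi$ in $\D'$, and $E_{w_n}\subseteq\D'$ continuously, the two limits coincide, so $f*\phi\in E_{w_n}$. Since $\phi\in\D$ and $n\in\N$ were arbitrary, $f\in\OC'(\D,E_\W)$, which completes the proof.

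The main obstacle is the double series in the last step. The windows $\theta^\ell_j=(\psi^\ell_L)_j*\Delta\phi$ are not dilates of a single fixed function, so Lemma~\ref{l:KWDecompVanishMoments} cannot be quoted verbatim — and, because $(iii)$ controls only $f*\chi^\ell_j$ and not $f*(\partial^\alpha\chi^\ell)_j$, no derivative bounds are available to feed into it in any case. What rescues the argument is that only $E$‑control, not $E^\infty$‑control, is required here, so the crude bound $\|\theta^\ell_j\|_{L^1}=O(2^{-Lj})$ obtained from the $L$ vanishing moments of $\psi^\ell_L$ suffices, and choosing the MDFP parameter $L>r$ \emph{after} $r$ has been fixed turns the a priori divergent growth factor $2^{rj}$ supplied by $(iii)$ into a convergent geometric series.
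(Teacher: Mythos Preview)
Your proof is correct and follows the same overall architecture as the paper's---closed graph for $(i)\Rightarrow(ii)$, MDFP decomposition plus moment cancellation for $(iii)\Rightarrow(i)$---but the execution of the hard direction is genuinely more elementary. The paper writes $[f*\phi]*(\psi^0_{r_n})_j$, invokes Lemma~\ref{l:KWDecompVanishMoments} (with trivial weights and $E=L^1$) to control $2^{jr_n}\partial^\alpha[\phi*(\psi^\ell_{r_n})_j]$ in $L^1$ via the Fourier side, and then appeals to Lemma~\ref{l:PhiReconstruct} to reassemble $f*\phi$ in $E^n_{w_n}$. You instead move $\Delta$ onto $\phi$, bound $\|(\psi^\ell_L)_j*\Delta\phi\|_{L^1}=O(2^{-Lj})$ by a direct Taylor expansion using the $L$ vanishing moments, and sum the resulting series absolutely in the Banach space $E_{w_n}$. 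This bypasses both auxiliary lemmas and needs no derivative control on the mollifications, which is exactly right since only $f*\phi\in E_\W$ (not $E^\infty_\W$) is required. The paper's route has the advantage of reusing the machinery built for Theorem~\ref{t:KWDecomp} uniformly; yours is shorter and self-contained for this statement.
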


\begin{proof}[Proof of Theorem~\ref{t:OCDecomp}]
(i) $\Rightarrow$ (ii): Fix a compact set $K \Subset \mathbb{R}^d$ and $n \in \N$. Then there exist $r \in \N$ and $C > 0$ such that
\[
  \| [f * \phi] w_n \|_E \leq C \max_{|\alpha| \leq r} \| \phi^{(\alpha)} \|_{L^1}, \qquad \phi \in \D(K),
\]
by the closed graph theorem. Consequently, for any $\phi \in \D(K)$ and $j \in \N$,
\[
  \| [f * \phi_j] w_n \|_E
  \leq C \max_{|\alpha| \leq r} \int_{\mathbb{R}^d} |\partial^\alpha (2^{jd} \phi(2^j x))| \, dx
  \leq C 2^{jr} \max_{|\alpha| \leq r} \| \phi^{(\alpha)} \|_{L^1},
\]
implying that $B_1$ is bounded in $E$.

(ii) $\Rightarrow$ (iii): This follows immediately by taking $\phi = \chi^\ell$ for $\ell \in \{0,1\}$.

(iii) $\Rightarrow$ (i): We aim to show that $f * \phi \in E_\W$ for all $\phi \in \D$.

Let $K \Subset \mathbb{R}^d$ be such that, for each $L \in \N$ and $\ell \in \{0, 1\}$, there exists a nontrivial pair
$(\varphi^\ell_L, \psi^\ell_L) \in \D(K) \times \D_L(K)$, satisfying \eqref{eq:DecompTestFuncCond2}, \eqref{eq:250408-712}, and \eqref{eq:MDFPFact}.

Fix $\phi \in \D$ and let $\widetilde{K}$ be a symmetric, convex compact set containing $K + \supp \phi$, the Minkowski sum of $K$ and $\supp \phi$.
Take $n \in \N$ and choose $m \geq n$ and $C > 0$ such that
\[
  w_n(x + y) \leq C w_m(x), \qquad \text{for all } x \in \mathbb{R}^d,\ y \in \widetilde{K}.
\]
Let $r_n$ be an even integer such that the set $\{ 2^{-r_nj} [f * \chi^\ell_j] w_m : j \in \N,\ \ell = 0, 1 \}$ is bounded in $E$.
Then, for all $|\alpha| \leq n$ and $j \in \N$,
\[
  |\partial^\alpha ([f * \phi] * (\psi^0_{r_n})_j) w_n|
  \leq C \sum_{\ell = 0}^{1} \left( 2^{-r_nj} |f * \chi^\ell_j| w_m \right) * \left( 2^{jr_n} |\partial^\alpha [\phi * (\psi^\ell_{r_n})_j]| \right).
\]
Since $r_n$ is an even integer, we may apply
Lemma~\ref{l:KWDecompVanishMoments}
 with $L=\frac{r_n}{2}$, $\omega = \nu = \kappa = 1$, and $E = L^1$.
 Thus, the set
\[
  \{ 2^{jr_n} \partial^\alpha [\phi * (\psi^\ell_{r_n})] : j \in \N,\ |\alpha| \leq n,\ \ell = 0,1 \}
\]
is bounded in $L^1$. Therefore, using the solidity of $E$, it follows that
\[
  \{ [f * \phi] * (\psi^0_{r_n})_j : j \in \N \}
\]
is bounded in $E^n_{w_n}$ for every $n \in \N$. Similarly, one shows that $[f * \phi] * \varphi^0_{r_n} \in E^n_{w_n}$ for all $n \in \N$.
From Lemma \ref{l:PhiReconstruct}, $f * \phi \in E^\infty_\W$.
Since $\phi$ is arbitrary, we conclude that $f \in\OC'(\D, E_\W)$.
\end{proof}

We move on to the multiplier spaces.

\begin{theorem}
	\label{t:OMDecomp}
	Let $\V, \W$ be weight function systems satisfying $(\condwM)$.
	Suppose that we have $(\chi^0, \chi^1) \in \D \times \D$ has the MDFP.
	For any $f \in \D^\prime$, the following are equivalent:
	\begin{itemize}
		\item[$(i)$] $f \in \OM(\dot{\B}_\V, E^\infty_\W)${\rm;}
		\item[$(ii)$] For any $\phi \in \D$ and $n \in \N$, 
there exists an integer
$m \in \N$ such that $\{ f * \phi_j \,:\, j \in \N \}$ is bounded in $(E_{\semloc})^n_{w_n / v_m}$.
		\item[$(iii)$] For any $\phi \in \D$, there exists a constant $r > 0$ so that for every $n \in \N$ there is an integer $m \in \N$ such that $\{ 2^{-rj} [f * \phi_j] \,:\, j \in \N \}$ is bounded in $(E_{\semloc})^n_{w_n / v_m}$.
		\item[$(iv)$] There exists a $r > 0$, so that for every $n \in \N$ there is an integer $m \in \N$ such that $\{ 2^{-rj} [f * \chi^\ell_j] \,:\, j \in \N, \ell = 0, 1 \}$ is bounded in $(E_{\semloc})^n_{w_n / v_m}$.
	\end{itemize}
\end{theorem}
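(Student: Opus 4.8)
The plan is to prove the cycle $(i)\Rightarrow(ii)\Rightarrow(iii)\Rightarrow(iv)\Rightarrow(i)$, mirroring the proofs of Theorems~\ref{t:KWDecomp} and~\ref{t:OCDecomp} but with $E$ systematically replaced by the solid TIBF of bounded type $E_{\semloc}$ (Lemma~\ref{l:ApproximSemilocTIBF}) and with the structural description of the multiplier space taken from Theorem~\ref{t:CharOM}. At the outset I would use Lemma~\ref{l:SmoothWeightFuncSyst} to pass to equivalent smooth $\W$ and $\V$ with their derivative estimates, and I would record the routine reformulation (via solidity of $E_{\semloc}$) that the condition ``$f^{(\alpha)}w_n/v_m\in E_{\semloc}$ for all $\alpha,n$'' of Theorem~\ref{t:CharOM} is equivalent to ``for each $n$ there is $m$ with $f\in(E_{\semloc})^n_{w_n/v_m}$''; in particular, any $f$ meeting it lies in $C^\infty$.

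For $(i)\Rightarrow(ii)$, given $f\in\OM(\dot{\B}_\V,E^\infty_\W)$ and $\phi\in\D$, $n\in\N$, Theorem~\ref{t:CharOM} supplies $m_1$ (after absorbing a $(\condwM)$-shift of $\W$ over $\supp\phi$ into an index $n_1\ge n$) with $f^{(\alpha)}w_{n_1}/v_{m_1}\in E_{\semloc}$ for $|\alpha|\le n$; a further $(\condwM)$-shift of $\V$ yields $m\ge m_1$ such that $w_n(x)/v_m(x)\le C\,w_{n_1}(x-z)/v_{m_1}(x-z)$ uniformly for $z$ in the (uniformly bounded) supports of the $\phi_j$, whence $|\partial^\alpha[f*\phi_j]|\,w_n/v_m\le C\,(|f^{(\alpha)}|\,w_{n_1}/v_{m_1})*|\phi_j|$ pointwise. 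Since $E_{\semloc}$ is a solid TIBF of bounded type, Lemma~\ref{l:ExtensionConvolution} together with $\|\phi_j\|_{L^1}=\|\phi\|_{L^1}$ gives a bound uniform in $j$, i.e.\ $\{f*\phi_j\}$ is bounded in $(E_{\semloc})^n_{w_n/v_m}$. The implication $(ii)\Rightarrow(iii)$ is immediate (any $r>0$ works), and $(iii)\Rightarrow(iv)$ follows by applying $(iii)$ to $\phi=\chi^0$ and $\phi=\chi^1$ and then replacing the two exponents and the two multiplier indices by their maxima, using solidity of $E_{\semloc}$.

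The core of the argument is $(iv)\Rightarrow(i)$. We may assume $r$ is an even integer; put $L:=r/2$ and fix a symmetric convex compact $K$ carrying the MDFP data, so that $\psi^\ell_r\in\D_r(K)=\D_{2L}(K)$. Fix a target order $n$. Using $(\condwM)$ for $\W$ and $\V$ I would pick short chains of successor indices and, invoking hypothesis $(iv)$ at a large enough order $n'$, obtain $m'$ with $B:=\{2^{-rj}[f*\chi^\ell_j]:j\in\N,\ \ell\in\{0,1\}\}$ bounded in $(E_{\semloc})^{n'}_{w_{n'}/v_{m'}}$. Taking the innermost $\V$-index to be $m'$, I would produce the smooth intermediate weight $\nu$ required by Lemma~\ref{l:KWDecompVanishMoments} by \emph{mollifying} a ratio weight $w_{n_2}/v_{c'}$ (via Lemma~\ref{l:SandwhichSmoothFunc}), arranging that $\omega:=w_n/v_c$ satisfies $\omega(x+y)\le C\nu(x)$ for $y\in K$ while $|\partial^\beta\nu|\le C\,\kappa$ with $\kappa:=w_{n_3}/v_{m'}$, and that $B$ is bounded in $(E_{\semloc})^{n+2+r+2d'}_\kappa$ precisely because $\kappa\le C\,w_{n'}/v_{m'}$ by the index choices and the solidity of $E_{\semloc}$. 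Applying Lemma~\ref{l:KWDecompVanishMoments} with $E:=E_{\semloc}$, its ``$n$'' equal to $n+2$, and $\phi:=\psi^\ell_r$ then yields that $\{[f*\chi^\ell_j]*(\psi^\ell_r)_j:j\in\N\}$ is bounded in $(E_{\semloc})^{n+2}_{w_n/v_c}$ for $\ell=0,1$ — the gain $2^{2Lj}=2^{rj}$ cancelling the $2^{rj}$ in $[f*\chi^\ell_j]=2^{rj}(2^{-rj}[f*\chi^\ell_j])$; summing over $\ell$ via \eqref{eq:MDFPFact} (together with $(g*h)_j=g_j*h_j$ and associativity of convolution with test functions) gives that $\{f*(\psi^0_r)_j\}$ is bounded there, while $f*\varphi^0_r\in(E_{\semloc})^{n+2}_{w_n/v_c}$ follows from \eqref{eq:MDFPFact} and Lemma~\ref{l:ConvGenOmega}. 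Lemma~\ref{l:PhiReconstruct}, applied with $E:=E_{\semloc}$, $\omega:=w_n/v_c$, $\varphi:=\varphi^0_r$, $\psi:=\psi^0_r$, then gives $f\in(E_{\semloc})^n_{w_n/v_c}$; since $n$ was arbitrary, $f$ is smooth and satisfies the condition of Theorem~\ref{t:CharOM}, hence $f\in\OM(\dot{\B}_\V,E^\infty_\W)$.

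I expect the delicate point to be the construction of $\nu$ in $(iv)\Rightarrow(i)$: one cannot take $\nu=w_{n_2}/v_{c'}$ and differentiate it, because under $(\condwM)$ alone the derivatives of $1/v_{c'}$ are not dominated by any weight of $\V$; mollifying the ratio weight (so that only the test-function derivatives $\chi^{(\beta)}$ get differentiated, cf.\ Lemma~\ref{l:SandwhichSmoothFunc}) resolves this, at the cost of a careful bookkeeping of the nested index choices — $n$ determines $n_2,n_3$; a large $n'$ (determined by $n_3$ and $n+2+r+2d'$) then determines $m'$ through $(iv)$, which is taken as the innermost $\V$-index and in turn determines $c'$ and finally $c$ — so that the definition of the output multiplier index $c$ is not circular and all hypotheses of Lemmas~\ref{l:KWDecompVanishMoments} and~\ref{l:PhiReconstruct} are met simultaneously.
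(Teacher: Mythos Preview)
Your proposal is correct and follows essentially the same route as the paper: the cycle $(i)\Rightarrow(ii)\Rightarrow(iii)\Rightarrow(iv)\Rightarrow(i)$, with Theorem~\ref{t:CharOM} supplying the structural description at both ends, and in the crucial step $(iv)\Rightarrow(i)$ the smooth intermediate weight $\nu$ obtained by mollifying a ratio $w_{\bullet}/v_{\bullet}$ (via Lemma~\ref{l:SandwhichSmoothFunc}) so that Lemma~\ref{l:KWDecompVanishMoments} applies with $E$ replaced by $E_{\semloc}$. The paper's index bookkeeping runs $n\to m\to k$ for $\W$, then $(iv)$ yields $k'$, then $k'\to m'\to n'$ for $\V$, with $\omega=w_n/v_{n'}$, $\nu=(w_m/v_{m'})_\eta$, $\kappa=w_k/v_{k'}$ --- your scheme is the same up to relabelling, and you correctly flag the mollification of the ratio as the delicate point.
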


\begin{proof}
	$(i) \Longrightarrow (ii)$
	Fix $f \in \OM(\dot{\B}_\V, E^\infty_\W)$ and $\phi \in \D$.
	For any $n \in \N$,
using $(\condwM)$,  let $n' \geq n$ and $C > 0$ be such that $w_n(x + y) \leq C w_{n'}(x)$ for all $x \in \mathbb{R}^d$ and $y \in \supp \phi$.
	Then by Theorem \ref{t:CharOM}, there is some $m' \in \N$ for which $f^{(\alpha)} w_{n'} / v_{m'} \in E_{\semloc}$ for all $|\alpha| \leq n$.
	Finally, there is an integer $m \geq m'$ and $C' > 0$ so that $v_{m'}(x - y) \leq C' v_{m}(x)$ for all $x \in \mathbb{R}^d$ and $y \in \supp \phi$.
	In particular, we have that
		\[ \frac{w_n(x + y)}{v_m(x+y)} \leq C C' \frac{w_{n'}(x)}{v_{m'}(x)} , \qquad x \in \mathbb{R}^d, ~ y \in \supp \phi . \]
	Then $\{ f * \phi_j \,:\, j \in \N \}$ is bounded in $(E_{\semloc})^n_{w_n / v_m}$ by Lemma \ref{l:ContinuousMapPhi}.
	
	$(ii) \Longrightarrow (ii)$ Trival (with $r = 0$).
	
	$(iii) \Longrightarrow (iv)$ Direct, by choosing $\phi = \chi^\ell$ for $\ell \in \{0, 1\}$.
	
	$(iv) \Longrightarrow (i)$ 
	Without loss of generality, we may suppose $r$ is an even integer.
	Let $K \Subset \mathbb{R}^d$ be symmetric and convex, such that there exist non-trivial pairs $(\varphi^\ell_L, \psi^\ell_L)\in \D(K) \times \D_L(K)$, for $\ell \in \{0, 1\}$ and $L \in \N$, satisfying \eqref{eq:DecompTestFuncCond2}, \eqref{eq:250408-712}, and \eqref{eq:MDFPFact}.
	Fix some $n \in \N$ and let $n \leq m \leq k$ and $C > 0$ be such that $w_n(x + y) \leq C w_m(x)$ and $w_m(x + y) \leq C w_k(x)$ for any $x \in \mathbb{R}^d$ and $y \in 2K$.
	There is an integer $k' \in \N$ such that $\{ 2^{-rj} [f * \chi^\ell_j] \,:\, j \in \N, \ell = 0, 1 \}$ is bounded in $(E_{\semloc})^{n + r + d + 4}_{w_k / v_{k'}}$.
	Then there exist $k' \leq m' \leq n'$ and some $C' > 0$ such that $v_{k'}(x + y) \leq C v_{m'}(x)$ and $v_{m'}(x + y) \leq C v_{n'}(x)$ for any $x \in \mathbb{R}^d$ and $y \in 2K$.
	We thus have that
		\[ \frac{w_n(x + y)}{v_{n'}(x + y)} \leq C C' \frac{w_m(x)}{v_{m'}(x)} , \qquad \frac{w_m(x + y)}{v_{m'}(x + y)} \leq C C' \frac{w_k(x)}{v_{k'}(x)} , \qquad x \in \mathbb{R}^d, y \in 2K . \]
	Take any $\eta \in \D(K)$ which is non-negative and satisfies $\int \eta = 1$.
	By Lemma \ref{l:SandwhichSmoothFunc}, there exists some $C'' > 0$ such that, for any $x \in \mathbb{R}^d$, $y \in K$, and $|\alpha| \leq r + d + 2$,
		\[ \frac{w_n(x + y)}{v_{n'}(x + y)} \leq C'' \left(\frac{w_m}{v_{m'}}\right)_{\eta}(x)
 , \qquad \left| \left(\frac{w_m}{v_{m'}}\right)_{\eta}^{(\alpha)}(x)\right| \leq C'' \frac{w_k(x)}{v_{k'}(x)} . \]
	Applying Lemma \ref{l:KWDecompVanishMoments}, with $\omega = w_n / v_{n'}$, $\nu = (w_m / v_{m'})_\eta$, and $\kappa = w_k / v_{k'}$, shows us that $\{ [f * \chi_j^\ell] * (\psi_r^\ell)_j \,:\, j \in \N, \ell = 0, 1 \}$ is a bounded subset of $(E_{\semloc})^{n + 2}_{w_n / v_{n'}}$. 
	The remainder of the proof can now be done analogously as that of $(iv) \Longrightarrow (i)$ of Theorem \ref{t:KWDecomp}, where we show that $f * \varphi^0_{r} \in (E_{\semloc})^n_{w_n / v_{n'}}$ using Lemma \ref{l:ConvGenOmega}.
\end{proof}

We specialize again by taking $\V=\W$.
Let us consider a specific form of Theorem \ref{t:OMDecomp}, utilizing the structure of \( \OM(\dot{\mathcal{B}}_\mathcal{W}, E^\infty_\mathcal{W}) \) described in Theorem~\ref{t:CharOMSingleW}.

\begin{theorem}
  \label{t:OMDecompSingleWeight}
  Let $\W$ be a weight function system satisfying conditions \((\mathrm{wM})\) and~\eqref{eq:SquarableWeight}.
  Assume that \( (\chi^0, \chi^1) \in \mathcal{D} \times \mathcal{D} \) has the MDFP.
  Then, for any \( f \in \mathcal{D}' \), the following statements are equivalent{\rm:}
  \begin{itemize}
    \item[$(i)$] \( f \in \OM(\dot{\mathcal{B}}_\mathcal{W}, E^\infty_\mathcal{W}) \).

    \item[$(ii)$] For every \( \phi \in \mathcal{D} \) and \( \alpha \in \mathbb{N}^d \), there exists \( n \in \mathbb{N} \) such that the set
    \[
      \left\{ \frac{\partial^\alpha [f * \phi_j]}{w_n} \,:\, j \in \mathbb{N} \right\}
    \]
    is bounded in \( E_{\semloc} \).

    \item[$(iii)$] For every \( \phi \in \mathcal{D} \), there exists \( r > 0 \) such that for each \( \alpha \in \mathbb{N}^d \), there exists \( n \in \mathbb{N} \) for which the set
    \[
      \left\{ 2^{-rj} \frac{\partial^\alpha [f * \phi_j]}{w_n} \,:\, j \in \mathbb{N} \right\}
    \]
    is bounded in \( E_{\semloc} \).

    \item[$(iv)$] There exists \( r > 0 \) such that for every \( \alpha \in \mathbb{N}^d \), there exists \( n \in \mathbb{N} \) for which the set
    \[
      \left\{ 2^{-rj} \frac{\partial^\alpha [f * \chi_j^\ell]}{w_n} \,:\, j \in \mathbb{N},\ \ell = 0, 1 \right\}
    \]
    is bounded in \( E_{\semloc} \).
  \end{itemize}
\end{theorem}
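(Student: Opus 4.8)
The plan is to derive the theorem from Theorem~\ref{t:OMDecomp} applied with $\V = \W$: its hypotheses are exactly those available here (namely $\W$ satisfies $(\condwM)$ and $(\chi^0,\chi^1)$ has the MDFP), and it shows that $(i)$ is equivalent to each of
\begin{itemize}
  \item[$(ii')$] for every $\phi \in \D$ and $n \in \N$ there is $m \in \N$ with $\{ f * \phi_j : j \in \N \}$ bounded in $(E_{\semloc})^n_{w_n/w_m}$;
  \item[$(iii')$] for every $\phi \in \D$ there is $r>0$ such that for every $n \in \N$ there is $m \in \N$ with $\{ 2^{-rj}[f * \phi_j] : j \in \N \}$ bounded in $(E_{\semloc})^n_{w_n/w_m}$;
  \item[$(iv')$] there is $r>0$ such that for every $n \in \N$ there is $m \in \N$ with $\{ 2^{-rj}[f * \chi^\ell_j] : j \in \N,\ \ell = 0,1 \}$ bounded in $(E_{\semloc})^n_{w_n/w_m}$.
\end{itemize}
So it suffices to prove $(ii)\Leftrightarrow(ii')$, $(iii)\Leftrightarrow(iii')$ and $(iv)\Leftrightarrow(iv')$. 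I would carry out $(iv)\Leftrightarrow(iv')$ in detail; the other two follow from the same two arguments, replacing the pair $\chi^\ell$ by a generic $\phi$ and, in the case of $(ii)$, dropping the factor $2^{-rj}$ throughout.

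For $(iv')\Rightarrow(iv)$ I would fix $\alpha\in\N^d$, apply $(iv')$ with $n := |\alpha|$ to obtain an $m$, and note that boundedness of $\{2^{-rj}[f*\chi^\ell_j]\}$ in $(E_{\semloc})^{|\alpha|}_{w_{|\alpha|}/w_m}$ forces boundedness of $\{2^{-rj}\,\partial^\alpha[f*\chi^\ell_j]\,w_{|\alpha|}/w_m : j,\ell\}$ in $E_{\semloc}$. Since $w_{|\alpha|}\ge 1$ we have $|\partial^\alpha[f*\chi^\ell_j]/w_m|\le|\partial^\alpha[f*\chi^\ell_j]\,w_{|\alpha|}/w_m|$, so solidity of $E_{\semloc}$ (it is a solid TIBF of bounded type by Lemma~\ref{l:ApproximSemilocTIBF}) yields $(iv)$ with the same $r$ and with $n=m$ for this $\alpha$. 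No use of \eqref{eq:SquarableWeight} is needed in this direction.

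The reverse implication $(iv)\Rightarrow(iv')$ is where \eqref{eq:SquarableWeight} enters. I would fix $n$, let $n(\alpha)$ be as furnished by $(iv)$ for each $\alpha$ with $|\alpha|\le n$, set $N := \max_{|\alpha|\le n} n(\alpha)\ge n$, and use \eqref{eq:SquarableWeight} at the index $N$ to pick $m\ge N$ with $w_N^2/w_m\in L^\infty$. For $|\alpha|\le n$ one writes
\[
  \partial^\alpha[f*\chi^\ell_j]\cdot\frac{w_n}{w_m}
  = \Bigl(\partial^\alpha[f*\chi^\ell_j]\cdot\frac{1}{w_{n(\alpha)}}\Bigr)\cdot\frac{w_{n(\alpha)}w_n}{w_m},
\]
and $w_{n(\alpha)}w_n\le w_N^2$ pointwise, so the multiplier $w_{n(\alpha)}w_n/w_m$ lies in $L^\infty$. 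Since $E_{\semloc}$ is an $L^\infty$-module under pointwise multiplication (solidity once more), multiplication by this bounded function sends the bounded set $\{2^{-rj}\partial^\alpha[f*\chi^\ell_j]/w_{n(\alpha)}:j,\ell\}$ of $E_{\semloc}$ to a bounded set; taking the maximum over the finitely many $\alpha$ with $|\alpha|\le n$ gives $(iv')$. The equivalences $(ii)\Leftrightarrow(ii')$ and $(iii)\Leftrightarrow(iii')$ are obtained verbatim, and combining them with Theorem~\ref{t:OMDecomp} proves the theorem.

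I do not expect a genuine obstacle here. The only substantive step is the absorption in $(iv)\Rightarrow(iv')$, which is precisely the mechanism that \eqref{eq:SquarableWeight} was introduced for, playing the same role as in the proof of Theorem~\ref{t:CharOMSingleW}; the rest is quantifier bookkeeping, where the point to watch is that in each of the three equivalences only the ``diagonalizing'' direction (from per-multi-index conditions to per-$n$ conditions) invokes \eqref{eq:SquarableWeight}, the opposite direction being unconditional. A fully self-contained route for $(i)\Rightarrow(ii)$ is also available — from the description of $\OM(\dot{\B}_\W,E^\infty_\W)$ in Theorem~\ref{t:CharOMSingleW}, the estimate $\|g*\phi_j\|_{E_{\semloc}}\le C_0\|\phi\|_{L^1}\|g\|_{E_{\semloc}}$ (Lemma~\ref{l:ExtensionConvolution} applied to the TIBF $E_{\semloc}$), and one application of $(\condwM)$ to bring the weight inside the convolution — but routing through Theorem~\ref{t:OMDecomp} avoids re-proving the harder $(iv)\Rightarrow(i)$ step.
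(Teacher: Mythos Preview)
Your proposal is correct and follows exactly the paper's approach: reduce to Theorem~\ref{t:OMDecomp} with $\V=\W$ and show that each of $(ii)$, $(iii)$, $(iv)$ is equivalent to its counterpart there, using $w_n\ge 1$ for one direction and \eqref{eq:SquarableWeight} for the other. The paper's proof is a one-line pointer to this reduction; you have simply spelled out the quantifier bookkeeping and the solidity argument in detail.
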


\begin{proof}
Since $w_n \ge 1$,
 by condition~\eqref{eq:SquarableWeight}, each of the statements \((ii)\), \((iii)\), and \((iv)\) is equivalent to its corresponding formulations in Theorem~\ref{t:OMDecomp}\((ii)\) and \((iii)\)
with $\V=\W$.
\end{proof}

We conclude this paper by proving Theorem \ref{t:Intro} from the Introduction.

\begin{proof}[Proof of Theorem \ref{t:Intro}]
We have the weight function system $\W_\omega = (\exp[n \cdot \omega])_{n \in \N}$, which satisfies $(\condwM)$ by \eqref{eq:moderateness}.
Note that $\W_\omega$ automatically satisfies \eqref{eq:SquarableWeight}
with $m=2n$.
As we saw in Lemma \ref{l:MEBasicTIBF},
$(L^p)_{\semloc} = L^p$.
Thus $\OM(\dot{\B}_{\W_\omega}, \K^p_{\W_\omega}) = \OM(\K^\infty_\omega, \K^p_\omega)$ by Corollary \ref{c:MultSpDotToInfty}.
Let $E=L^p$.
The result then follows from Theorems \ref{t:KWDecomp}, \ref{t:OCDecomp},
and \ref{t:OMDecompSingleWeight} applied to $E^\infty_{\W_\omega}$, $\OC^\prime(\D, E_{\W_\omega})
=\OC^\prime(\D, E_{\W})$, and $\OM(\dot{\B}_{\W_\omega}, \K^p_{\W_\omega})$, respectively.
\end{proof}

\section*{Acknowledgement}

The authors are grateful to the anonymous reviewers for their careful reading, which improved the presentation of this paper.

\end{document}